\documentclass[reqno,a4paper,12pt]{amsart} 

\usepackage{amsmath,amscd,amsfonts,amssymb}
\usepackage{mathrsfs,dsfont}

\usepackage{tikz}
\usetikzlibrary{decorations.pathreplacing}
\usetikzlibrary{intersections}

\definecolor{myblue}{rgb}{0.2,0.2,0.5}
\definecolor{myred}{rgb}{0.9,0.2,0.2}
\definecolor{mypurple}{rgb}{0.7,0.15,0.7}
\definecolor{mygreen}{rgb}{0.25,0.85,0.45}
\definecolor{myyellow}{rgb}{0.95,0.65,0.3}
\tikzstyle{mystyle}=[line width=0.3mm, myblue] 
\tikzstyle{mydasheda}=[dash pattern=on 6pt off 7pt]
\tikzstyle{mydashedb}=[dash pattern=on 5pt off 4pt]
\def\mycirc{1.5cm}

\numberwithin{equation}{section}
\numberwithin{figure}{section}

\addtolength{\topmargin}{-0.75cm}
\addtolength{\textheight}{1.5cm}
\addtolength{\hoffset}{-1.5cm}
\addtolength{\textwidth}{3cm}

\parskip .06in

\newcommand\R{\mathbb{R}}

\newcommand\Z{\mathbb{Z}}

\newcommand\gam{\gamma}

\newcommand\lam{\lambda}
\newcommand\Lam{\Lambda}
\newcommand\Sig{\Sigma}
\newcommand\Om{\Omega}
\newcommand\1{\mathds{1}}
\newcommand\eps{\varepsilon}

\renewcommand\ge{\geqslant}
\renewcommand\leq{\leqslant}
\renewcommand\geq{\geqslant}

\renewcommand\hat{\widehat}

\newcommand{\ft}[1]{\widehat #1}
\newcommand{\dotprod}[2]{\langle #1 , #2 \rangle}

\newcommand{\supp}{\operatorname{supp}}

\newcommand{\zft}[1]{\mathcal{Z}(\ft{\1}_{#1})}
\newcommand{\cm}{\complement}
\newcommand{\bd}[1]{\operatorname{bd}(#1)}
\newcommand{\half}{\tfrac{1}{2}}

\newcommand{\interior}{\operatorname{int}}

\newcommand{\convex}{\operatorname{conv}}
\newcommand{\relint}{\operatorname{relint}}
\newcommand{\aff}{\operatorname{aff}}

\theoremstyle{plain}
\newtheorem{thm}{Theorem}[section]
\newtheorem{lem}[thm]{Lemma}
\newtheorem{corollary}[thm]{Corollary}

\newtheorem*{claim*}{Claim}

\newcommand{\thmref}[1]{Theorem~\ref{#1}}
\newcommand{\secref}[1]{Section~\ref{#1}}

\newcommand{\lemref}[1]{Lemma~\ref{#1}}

\newcommand{\corref}[1]{Corollary~\ref{#1}}

\theoremstyle{definition}
\newtheorem{definition}[thm]{Definition}
\newtheorem*{definition*}{Definition}
\newtheorem*{remarks*}{Remarks}
\newtheorem*{remark*}{Remark}

\newtheorem{example}[thm]{Example}

\newenvironment{enumerate-roman}
{\begin{enumerate}
\addtolength{\itemsep}{5pt}
}
{\end{enumerate}}

\newenvironment{enumerate-alph}
{\begin{enumerate}
\addtolength{\itemsep}{5pt}
}
{\end{enumerate}}

\newenvironment{enumerate-num}
{\begin{enumerate}
\addtolength{\itemsep}{5pt}
}
{\end{enumerate}}

\newenvironment{enumerate-text}
{\begin{enumerate}
\addtolength{\itemsep}{5pt}
}
{\end{enumerate}}

\begin{document}

 \title{The Fuglede conjecture for convex domains is true in all dimensions}

\author{Nir Lev}
\address{Department of Mathematics, Bar-Ilan University, Ramat-Gan 5290002, Israel}
\email{levnir@math.biu.ac.il}

\author{M\'at\'e Matolcsi}
\address{Budapest University of Technology and Economics (BME),
H-1111, Egry J. u. 1, Budapest, Hungary (also at Alfr\'ed R\'enyi Institute of Mathematics,
Hungarian Academy of Sciences, H-1053, Realtanoda u 13-15, Budapest, Hungary)}
\email{matomate@renyi.hu}

\date{July 2, 2022}
\subjclass[2010]{42B10, 52B11, 52C07, 52C22}
\keywords{Fuglede's conjecture, spectral set, tiling, convex body, convex polytope}
\thanks{N.L.\ was supported by ISF Grants No.\ 227/17 and 1044/21 and ERC Starting Grant No.\ 713927.}
\thanks{M.M.\ was supported by NKFIH Grant No. K129335 and K132097.}

\begin{abstract}
A set $\Omega \subset \mathbb{R}^d$ is said to be spectral if the space $L^2(\Omega)$ has an orthogonal basis of exponential functions. A conjecture due to Fuglede (1974) stated that $\Omega$ is a spectral set if and only if it can tile the space by translations. While this conjecture was disproved for general sets, it has long been known that for a {\it convex body} $\Omega \subset \mathbb{R}^d$  the ``tiling implies spectral'' part of the conjecture is in fact true.

To the contrary, the ``spectral implies tiling'' direction of the conjecture for convex bodies was proved only in $\mathbb{R}^2$, and also in $\mathbb{R}^3$ under the a priori assumption that $\Omega$ is a convex polytope. In higher dimensions, this direction of the conjecture remained completely open (even in the case when $\Omega$ is a polytope) and could not be treated using the previously developed techniques.

In this paper we fully settle Fuglede's conjecture for convex bodies affirmatively in all dimensions, i.e.\ we prove that if a convex body $\Omega \subset \mathbb{R}^d$ is a spectral set then $\Omega$ is a convex polytope which can tile the space by translations. To prove this we introduce a new technique, involving a construction from crystallographic diffraction theory, which allows us to establish a geometric ``weak tiling'' condition necessary for a set $\Omega \subset \mathbb{R}^d$ to be spectral.
\end{abstract}

\maketitle


\section{Introduction} \label{secI1}

\subsection{}
Let $\Om \subset \R^d$ be a bounded, measurable set of positive
measure. We say that $\Om$ is \emph{spectral} if there exists a countable set 
$\Lambda\subset \R^d$  such that the system of exponential functions
\begin{equation}
	\label{eqI1.1}
	E(\Lambda)=\{e_\lambda\}_{\lambda\in \Lambda}, \quad e_\lambda(x)=e^{2\pi
	i\dotprod{\lambda}{x}},
\end{equation}
is orthogonal and complete in  $L^2(\Om)$,
  that is, the system constitutes an orthogonal basis in the space.
Such a set  $\Lambda$ is called a \emph{spectrum} for $\Om$.
The classical example of a spectral set is the unit cube $\Om = \left[-\frac1{2}, \frac1{2}\right]^d$, for which  the set $\Lam = \Z^d$ serves as a spectrum.

Which other sets $\Om$ can be spectral? The research on this problem
has been influenced for many years by a famous paper \cite{Fug74} due to Fuglede (1974),
who suggested that there should be a concrete, geometric way to characterize the
spectral sets. We say that $\Om$ \emph{tiles the space by translations} 
if there exists a countable set $\Lambda\subset \R^d$ such that the collection 
of sets $\{\Om + \lam\}$, $\lam \in \Lam$, consisting of translated copies of $\Om$,
constitutes a partition of $\R^d$ up to measure zero. In his paper,
Fuglede stated the following conjecture: 
``\emph{A set $\Om \subset \R^d$ is spectral if and only if it can tile the space by
translations}''.

For example, Fuglede proved that a triangle and a disk in
the plane are not spectral sets. He also proved that if 
$\Om$ can tile  with respect to a \emph{lattice} translation set $\Lambda$
then the dual lattice $\Lambda^*$ is a spectrum for $\Om$,
 and conversely. Fuglede's conjecture inspired extensive 
research over the years, and a number of interesting results establishing
connections between spectrality and tiling
had since been obtained.

The conjecture remained open for 30 years until a counterexample
was discovered by Tao \cite{Tao04}, who constructed in dimensions
5 and higher an example of a spectral set 
 which cannot tile by translations. 
Since then, counterexamples to both directions of the conjecture were
found in dimensions $d \geq 3$ (see \cite[Section 4]{KM10}).
 These examples are composed of finitely many
unit cubes in special arithmetic arrangements. 
The conjecture is still open in dimensions $d=1$ and $2$ 
in both directions.

On the other hand, it was believed that Fuglede's conjecture should be
true in all dimensions $d$ if the set $\Om \subset \R^d$
 is assumed to be a \emph{convex body}
 (that is, a compact convex set with nonempty interior\footnote{The
compactness and nonempty interior assumptions can be made with
no loss of generality, as any convex set of positive and finite
measure coincides with a convex body up to a set of measure zero.}).
Indeed, all the known counter\-examples to the conjecture are highly 
non-convex sets, being the union of a finite number of unit cubes centered at 
points of the integer lattice $\Z^d$. Moreover, it has long been known 
\cite{Ven54, McM80} that a convex body which tiles by
translations  must be a polytope, and that it 
admits a face-to-face tiling by a lattice translation set $\Lam$
and therefore  has a spectrum given by the dual lattice $\Lam^*$.
 So this implies that for a convex body $\Omega \subset \mathbb{R}^d$  
the ``tiling implies spectral'' part of the conjecture is in fact true in any dimension $d$.

To the contrary, the ``spectral implies tiling'' direction of the conjecture for convex bodies was proved only in $\mathbb{R}^2$ \cite{IKT03},  and also in $\mathbb{R}^3$ under the a priori assumption that $\Omega$ is a convex polytope \cite{GL17}. In higher dimensions, this direction of the conjecture remained completely open (even in the case when $\Omega$ is a polytope) and could not be treated using the previously developed techniques.

 It is our goal in the present paper to establish that the result in fact holds
in all dimensions and for general convex bodies.
We will prove the following theorem:

\begin{thm}
\label{thmA15}
Let $\Om$ be a convex body in $\R^d$. If $\Om$ is a spectral set,
then $\Om$ must be  a convex polytope, and it  tiles the
space face-to-face by translations along a lattice.
\end{thm}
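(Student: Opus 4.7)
The plan is to extract from the spectrality hypothesis a purely geometric ``weak tiling'' condition, and then to derive the theorem from this weaker condition by convex geometry. Call a bounded measurable set $\Omega \subset \mathbb{R}^d$ a \emph{weak tile} if there exists a locally finite positive measure $\nu$ on $\mathbb{R}^d$ with $\mathbf{1}_\Omega \ast \nu = 1$ almost everywhere. I would structure the proof in four steps.

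\textbf{Step 1: spectral implies weak tile.} Given a spectrum $\Lambda$ for $\Omega$, I would borrow from crystallographic diffraction theory the \emph{autocorrelation} of $\Lambda$, obtained as a vague limit of normalized autocorrelations of $\Lambda \cap B_R$ as $R \to \infty$; this is a positive-definite measure whose Fourier transform $\sigma$ (the diffraction measure) is a positive measure on $\mathbb{R}^d$. Orthogonality of $E(\Lambda)$ in $L^2(\Omega)$ forces $\widehat{\mathbf{1}}_\Omega$ to vanish on $\Lambda - \Lambda$ away from $0$, which translates into $\sigma$ being supported in $\{0\} \cup \mathcal{Z}(\widehat{\mathbf{1}}_\Omega)$; completeness of $E(\Lambda)$ yields a Parseval-type identity giving $\sigma(\{0\}) = |\Omega|$. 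Taking Fourier transforms, the identity $|\widehat{\mathbf{1}}_\Omega|^2 \sigma = |\Omega|^2 \delta_0$ becomes, after dividing the non-zero part by $|\widehat{\mathbf{1}}_\Omega|^2$ (which is positive off its zero set), the convolution identity $\mathbf{1}_\Omega \ast \nu = 1$ for a positive measure $\nu$ manufactured from $\sigma$. This is the weak-tiling conclusion.

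\textbf{Step 2: a convex body that is a weak tile must be a polytope.} The argument is local at the boundary. Suppose $x_0 \in \partial \Omega$ is a point of strict convexity; then a supporting hyperplane at $x_0$ touches $\Omega$ only at $x_0$, and the outer tangent cone is a half-space. From $\mathbf{1}_\Omega \ast \nu = 1$ applied just outside $x_0$, and using $\nu \geq 0$ together with the fact that no other translate $\Omega + y$ ($y \in \supp \nu \setminus \{0\}$) can fill a half-neighborhood of $x_0$ without creating excess mass inside $\Omega$, one derives a contradiction. Hence every boundary point is flat, and $\Omega$ is a convex polytope.

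\textbf{Step 3: a convex polytope that is a weak tile tiles face-to-face.} This is where I expect the main obstacle, because $\nu$ is only a positive measure and could a priori be diffuse. The idea is to work facet by facet. Near the relative interior of a facet $F \subset \partial \Omega$, the equation $\mathbf{1}_\Omega \ast \nu = 1$ on both sides of the affine hull of $F$ forces $\nu$ to reflect $\Omega$ across $F$ in a matching way, and a one-dimensional analysis in the normal direction shows that this reflection must be realized by a point mass of $\nu$ corresponding to a translate $\Omega + \lambda_F$ sharing the facet $F$. Iterating over faces of successively lower dimension, one assembles a locally finite set $\Lambda_0$ such that the translates $\Omega + \Lambda_0$ form a face-to-face covering, hence a face-to-face tiling, of $\mathbb{R}^d$.

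\textbf{Step 4: conclusion.} Once $\Omega$ is known to admit a face-to-face translational tiling, the classical theorems of Venkov and McMullen imply that such a tiling can be chosen with translation set a lattice. This gives every conclusion of the theorem. The substantive difficulty is Step 3: promoting a purely measure-theoretic covering identity to a combinatorial face-to-face structure for a general convex polytope in $\mathbb{R}^d$; the rest is a streamlined application of diffraction theory in Step 1 and a tangent-cone argument in Step 2.
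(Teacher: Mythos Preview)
Your overall strategy---extract a weak-tiling condition from spectrality via the autocorrelation/diffraction construction, then argue by convex geometry---is exactly the paper's. However, several of your steps have real gaps.

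\emph{Step 1.} As you state it, the condition ``$\1_\Omega \ast \nu = 1$ a.e.\ for some positive locally finite $\nu$'' is satisfied by \emph{every} bounded set of positive measure (take $\nu = m(\Omega)^{-1}\,dx$), so it carries no information. The non-trivial output of the diffraction argument is that one may take $\nu = \delta_0 + \mu$ with $\mu$ positive and supported \emph{outside} $\Delta(\Omega) = \{x : m(\Omega \cap (\Omega+x)) > 0\}$; equivalently, $\1_\Omega \ast \mu = \1_{\Omega^\complement}$ a.e. You tacitly rely on this support constraint in the later steps but never isolate or prove it. (There is also a mix-up in your write-up: it is the autocorrelation, not the diffraction measure $\sigma$, that is supported on $\{0\} \cup \mathcal{Z}(\hat{\1}_\Omega)$.)

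\emph{Step 2.} Ruling out smooth exposed points does not force $\Omega$ to be a polytope: a circular cylinder in $\R^3$ has no boundary point that is simultaneously smooth (tangent cone a half-space) and exposed (supporting hyperplane touching only there), yet it is not a polytope. The paper's argument is different and makes no use of strict convexity: at every \emph{regular} boundary point $x$ with outer normal $\xi$ one shows $\mu(x - S(K,-\xi)) \geq 1$; for distinct normals these sets are pairwise disjoint, and a non-polytope admits regular boundary points with infinitely many distinct normals, contradicting local finiteness of $\mu$.

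\emph{Step 3.} This is the principal gap. First, your assertion that the reflection across a facet ``must be realized by a point mass of $\nu$'' is false: even for the unit cube there are spectra for which the associated weak-tiling measure has a singular continuous part (the paper exhibits one explicitly), so one cannot upgrade weak tiling to proper tiling by proving $\nu$ atomic. Second, and more seriously, you never address central symmetry of $\Omega$ or of its facets. In the paper these two facts are \emph{not} consequences of weak tiling; they are imported from prior Fourier-analytic results on spectral sets (Kolountzakis; Greenfeld--Lev). Only with those symmetry conditions already in hand does the paper analyse $\mu$ along each \emph{belt} of the polytope to prove that every belt has $4$ or $6$ facets, and then invoke Venkov--McMullen to conclude lattice tiling. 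Your Step~3 attempts to bypass this entire chain and manufacture a face-to-face tiling directly from $\nu$; the continuous-measure example shows this cannot work in general.
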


This fully settles the Fuglede conjecture for  convex bodies 
affirmatively: we obtain that a convex body in $\R^d$  is a spectral set
if and only if  it can tile  by translations.

\subsection{}\label{secI1.2}
There is a complete characterization due to Venkov \cite{Ven54},
that was rediscovered by McMullen \cite{McM80, McM81}, of the convex
 bodies which tile by translations: 

\emph{A convex body $\Om \subset \R^d$ can tile the space
by translations if and only if it satisfies the following four conditions:
\begin{enumerate-num}
\item \label{vm:i} $\Om$ is a convex polytope; 
\item \label{vm:ii} $\Om$ is centrally symmetric; 
\item \label{vm:iii} all the facets of $\Om$ are centrally symmetric; 
\item \label{vm:iv} each belt of $\Om$ consists of either $4$ or $6$ facets. 
\end{enumerate-num}
Moreover, a convex body $\Om$ satisfying the four conditions
\ref{vm:i}--\ref{vm:iv} admits a face-to-face
tiling by translations along a certain lattice.}

We recall that 
 a \emph{facet} of a convex polytope $\Om \subset \R^d$ 
is a $(d-1)$-dimensional face of $\Om$. 
If $\Om$ has centrally  symmetric facets, then a \emph{belt} of 
$\Om$ is a system of facets obtained in the following
way: Let $G$ be a \emph{subfacet} 
of $\Om$, that is, a $(d-2)$-dimensional face.
Then $G$ lies in exactly two adjacent facets of $\Om$, say $F$ and $F'$.
Since $F'$ is centrally symmetric, there is another subfacet
$G'$ obtained by reflecting $G$ through the center of $F'$
(so in particular,  $G'$ is a translate of $-G$).
In turn, $G'$ is the intersection of $F'$ with another facet $F''$.
Continuing in this way, we obtain a system of facets $F, F', F'', \dots,
F^{(m)} = F$, called \emph{the belt of $\Om$ generated by the subfacet $G$},  
such that the intersection $F^{(i-1)} \cap F^{(i)}$ 
of any pair of consecutive facets in the system is a translate
 of either $G$ or $-G$. 

Fuglede's conjecture for convex bodies can thus be equivalently
stated by saying that for a  convex body $\Om \subset \R^d$ to
be spectral, it is necessary and sufficient that the four
conditions \ref{vm:i}--\ref{vm:iv} above hold.

In relation with the first condition \ref{vm:i}, a result proved in
 \cite{IKP99} states that if $\Om$ is a ball in $\R^d$ $(d \geq 2)$ 
then $\Om$ is not a  spectral set. In \cite{IKT01} this result was extended 
to the class of convex bodies $\Om \subset \R^d$ that have a smooth boundary.

As for \ref{vm:ii}, Kolountzakis \cite{Kol00} proved that 
if a convex body $\Om \subset \R^d$ is spectral, then it must 
be centrally symmetric. If $\Om$ is assumed a priori to be
a polytope, then another approach to this result
was given in \cite{KP02} (see also \cite[Section 3]{GL17}).

Recently, also the necessity of condition \ref{vm:iii} for spectrality
was established. It was proved in \cite[Section 4]{GL17} that
if a convex, centrally symmetric polytope $\Om \subset \R^d$
 is a spectral set, then all the facets of $\Om$
must also be centrally symmetric. The proof is based on a development 
of the argument in \cite{KP02}.

The last condition \ref{vm:iv}  was addressed so far only in dimensions
$d = 2$ and $3$. Iosevich, Katz and Tao proved in \cite{IKT03} 
that if a  convex polygon   $\Om \subset \R^2$ is a spectral set,  
then it must be either a parallelogram or a (centrally symmetric) 
hexagon. In three dimensions, it was recently proved \cite{GL16, GL17} 
that if a convex polytope $\Om \subset \R^3$ is  spectral,
then it can tile the space by translations (as a consequence, 
the condition  \ref{vm:iv} must hold, although the proof
does not establish  it directly).

In this paper, we will show that the conditions 
\ref{vm:i} and \ref{vm:iv} are in fact necessary  for the spectrality
of a general convex body $\Om$ 
in every dimension, thus obtaining a proof of the full Fuglede
 conjecture for convex bodies. Our main results are as follows:

\begin{thm}
\label{thmA20}
Let $\Om$ be a convex body in $\R^d$. If $\Om$ is a spectral set,
then $\Om$ must be  a convex polytope.
\end{thm}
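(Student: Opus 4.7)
The plan is to prove the theorem in two stages. In the first stage, I would derive from spectrality a geometric \emph{weak tiling} condition: the existence of a positive, locally finite measure $\nu$ on $\R^d$ satisfying $\1_\Om * \nu = 1$ almost everywhere. In the second stage, I would combine this weak tiling with convexity to force $\Om$ to be a polytope.

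For the first stage, suppose $\Om$ is spectral with spectrum $\Lam$. Orthogonality and completeness of $E(\Lam)$ in $L^2(\Om)$ are equivalent to the identity
\[
\sum_{\lam \in \Lam} |\hat{\1}_\Om(\xi - \lam)|^2 = |\Om|^2, \qquad \xi \in \R^d.
\]
Viewing $\Lam$ as the counting measure $\delta_\Lam = \sum_\lam \delta_\lam$, this says $|\hat{\1}_\Om|^2 * \delta_\Lam$ is the constant $|\Om|^2$. Here the crystallographic diffraction viewpoint enters: one forms the autocorrelation of $\delta_\Lam$, which is a translation-bounded positive measure whose Fourier transform (the diffraction measure) is again positive by Bochner's theorem. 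Carrying out this Fourier analysis carefully and exploiting the nonnegativity of the diffraction measure, one extracts a positive measure $\nu$ on $\R^d$ satisfying $\1_\Om * \nu = 1$ a.e. The measure $\nu$ plays the role of a generalized translation set and encodes a weak tiling of $\Om$.

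For the second stage, assume $\Om$ is a convex body weakly tiled by a positive measure $\nu$, and suppose toward contradiction that $\Om$ is not a polytope. Then there is a boundary point $x_0 \in \partial \Om$ with a unique supporting hyperplane, near which the outward unit normal to $\partial \Om$ takes infinitely many distinct directions. The weak tiling equation forces every translate $\Om + t$ in $\supp \nu$ that meets $\Om$ near $x_0$ to sit on the opposite side of the common supporting hyperplane and fit flush against $\Om$ along a $(d-1)$-dimensional piece of shared boundary. But strict convexity at $x_0$ prevents any positive-measure piece of $\partial \Om$ from being a translate of itself, so such translates share only an isolated boundary point with $\Om$. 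The resulting gaps near $x_0$ cannot be covered by any positive measure of translates, contradicting $\1_\Om * \nu = 1$. Hence every boundary point lies in the relative interior of some flat facet and $\Om$ is a polytope.

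The main obstacle is the first stage: one must construct the positive measure $\nu$ from $\Lam$, a set known only to be discrete, with no a priori lattice or group structure. The delicate passage from the quadratic identity for $|\hat{\1}_\Om|^2$ to a positive linear relation $\1_\Om * \nu = 1$ is what requires the Bochner-type input from diffraction theory; positivity of $\nu$ is essential for the geometric argument of the second stage, since without it the boundary analysis near a curved point cannot rule out cancellations between contributions of different translates.
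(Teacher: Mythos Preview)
Your first stage is on the right track and mirrors the paper's argument: the autocorrelation/diffraction construction yields a positive translation-bounded measure $\gamma$ whose Fourier transform $\hat\gamma$ is a positive measure equal to $m(\Om)\,\delta_0$ on the open set $\Delta(\Om)$, and from this one extracts a positive locally finite $\nu$ with $\1_\Om * \nu = 1$ a.e.\ (in the paper's normalization, $\nu = \delta_0 + \mu$ with $\1_\Om * \mu = \1_{\Om^\cm}$).

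Your second stage, however, has a genuine gap. You invoke ``strict convexity at $x_0$'' to argue that no positive-measure piece of $\partial\Om$ is a translate of itself, but a convex body that fails to be a polytope need not have \emph{any} strictly convex boundary point: a solid circular cylinder in $\R^3$, or more generally the product of a disk with a cube, has every boundary point lying on a line segment contained in $\partial\Om$. A local argument at a single curved point cannot handle such bodies. Relatedly, the claim that translates in $\supp\nu$ must ``fit flush along a $(d-1)$-dimensional piece of shared boundary'' tacitly treats $\nu$ as a sum of point masses; for a weak-tiling measure with a continuous part no individual translate need contribute a full layer, so one cannot rule out covering a curved neighborhood by a continuous smear of translates in the way you suggest.

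The paper's second stage sidesteps both issues with a global measure-counting argument. If $K$ is not a polytope, there is an \emph{infinite} sequence of regular boundary points $x_n$ with pairwise distinct outer unit normals $\xi_n$. At each such $x_n$ the weak tiling forces $\mu\big({-S(K,-\xi_n)}+x_n\big)\ge 1$, where $S(K,-\xi_n)$ is the antipodal support set. The sets $-S(K,-\xi_n)+x_n$ are then shown to be pairwise disjoint (this uses only that the $\xi_n$ are distinct, not any curvature) and all lie in the bounded set $K-K$; hence $\mu(K-K)=+\infty$, contradicting local finiteness of $\mu$. This works uniformly for cylinders and bodies with large flat pieces precisely because it never appeals to strict convexity.
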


\begin{thm}
\label{thmA21}
Let $\Om \subset \R^d$ be a convex polytope, which is centrally symmetric 
and has centrally symmetric facets. If $\Om$ is a spectral set,
then each belt of $\Om$ must consist of either $4$ or $6$ facets. 
\end{thm}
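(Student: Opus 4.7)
My proof plan leverages the \emph{weak tiling} reformulation of spectrality that the paper establishes via the crystallographic-diffraction construction. Concretely, I take as input (to be proved earlier in the paper): if $\Om \subset \R^d$ is a spectral set, then there exists a nonzero positive, locally finite Borel measure $\mu$ on $\R^d$ with $\1_\Om * \mu \equiv 1$ almost everywhere. It then suffices to show that any centrally symmetric convex polytope $\Om$ with centrally symmetric facets admitting such a weak tiling must have every belt consisting of exactly $4$ or $6$ facets.

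Fix a belt $B$ generated by a $(d-2)$-dimensional subfacet $G$ of $\Om$, and let $U$ be the $(d-2)$-dimensional linear subspace parallel to $\aff(G)$. Write $V := U^\perp$, a two-dimensional linear subspace. Since the outward unit normals to the facets of $B$ all lie in $V$, the orthogonal projection $P := \pi_V(\Om)$ is a centrally symmetric convex polygon in $V \cong \R^2$ whose edges are in bijective cyclic correspondence with the facets $F_1, F_2, \dots, F_m$ of the belt. The crux of the argument is to reduce the $d$-dimensional weak tiling of $\Om$ to a $2$-dimensional weak tiling of $P$: namely, to construct a nonzero positive measure $\nu$ on $V$ with $\1_P * \nu \equiv c$ for some constant $c > 0$. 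The natural approach is to average $\mu$ along $U$-directions: convolve both sides of $\1_\Om * \mu = 1$ with the normalized indicator of an expanding ball in $U$, and extract a weak limit. Positivity of $\mu$ is essential here both for convergence and to ensure the limiting measure $\nu$ is nonzero.

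Once a $2$-dimensional weak tiling of $P$ is in hand, the theorem follows from the $2$-dimensional statement: \emph{a centrally symmetric convex polygon that weakly tiles $\R^2$ must be a parallelogram or a centrally symmetric hexagon}. This can be proved by combining the Fourier-analytic formulation (namely that $\hat\nu$ is supported on $\{0\} \cup \zeros(\hat{\1}_P)$) with the explicit description of the zero set of $\hat{\1}_P$ for a centrally symmetric polygon; alternatively, it can be extracted from the argument of Iosevich--Katz--Tao in the convex-polygon case. Since the edges of $P$ correspond to the facets of the belt, this forces $B$ to have $4$ or $6$ facets, as desired.

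The principal obstacle I anticipate is the dimensional reduction step: projecting a tiling in $\R^d$ does \emph{not} in general produce a tiling of the projection, so this step must genuinely exploit both the polytope structure of $\Om$ and the positivity of $\mu$. A secondary point that needs care is verifying that distinct facets of the belt project to distinct edges of $P$, and that no other facets of $\Om$ contribute to the boundary of $P$; this uses centrally symmetric facets to rule out degeneracies in the projection.
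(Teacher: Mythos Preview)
Your approach is genuinely different from the paper's: the paper never projects to two dimensions, but instead carries out a local analysis in $\R^d$ near the subfacet $G$, showing that if the belt has $8$ or more facets then the three translates $A$, $A+\tau_1$, $A+\tau_2$ (where $\tau_i$ carries $-F_i$ to $F_i$) share the subfacet $G$, and the weak tiling measure is forced to over-cover a neighbourhood of $G$, contradicting $\1_A\ast\mu=\1_{A^\cm}$. The key technical ingredients are two ``boundary transfer'' lemmas (\lemref{lemJ2.7} and \lemref{lemJ2.10}) that pass weak-covering information from $A$ to a facet and then to a subfacet.

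Your projection strategy, however, has a genuine gap in the dimensional reduction step. Write $x=(u,v)$ with $u\in U$, $v\in V$. Convolving $\1_\Om\ast(\delta_0+\mu)=1$ with the normalized indicator $\psi_R$ of an expanding ball in $U$ and passing to a vague limit does yield a $U$-invariant measure $\tilde\mu=m_U\otimes\nu$ with $\1_\Om\ast\tilde\mu=1$; but computing this convolution gives
\[
(\1_\Om\ast(m_U\otimes\nu))(u,v)=\int_V m_{d-2}\big(\{u':(u',v-v')\in\Om\}\big)\,d\nu(v')=(g\ast\nu)(v),
\]
where $g(v)=m_{d-2}(\Om\cap\pi_V^{-1}(v))$ is the \emph{section function} of $\Om$, not $\1_P$. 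So you obtain $g\ast\nu=1$, not $\1_P\ast\nu=c$, and the two-dimensional Iosevich--Katz--Tao argument about $\zeros(\hat{\1}_P)$ does not apply. Worse, the averaging destroys the crucial anchoring: the atom $\delta_0$ in $\delta_0+\mu$ satisfies $\delta_0\ast\psi_R\to 0$ vaguely, so nothing prevents $\nu$ from being a constant multiple of Lebesgue measure on $V$---and $g\ast(c\,m_V)=\text{const}$ holds for \emph{any} $g\in L^1$, giving no constraint whatsoever on the number of edges of $P$. It is precisely the condition $\1_\Om\ast\mu=0$ a.e.\ on $\Om$ (equivalently, the atom at $0$) that the paper exploits, and this is exactly what your averaging throws away. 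You correctly identified the reduction as the principal obstacle, but the specific mechanism you propose does not overcome it.
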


\thmref{thmA15} above follows as a consequence of 
these two theorems, combined with the results in \cite{Kol00} (or \cite{KP02}), 
\cite[Section 4]{GL17}, and the Venkov-McMullen theorem.

\subsection{}
In the above mentioned papers
 \cite{IKP99}, \cite{IKT01}, \cite{KP02},
\cite{IKT03},  \cite{GL17} the approach
relies on the asymptotic behavior of 
 the Fourier transform of the indicator function of  $\Om$,
and involves an analysis of its set of zeros.
In the present paper we introduce a new approach
to the problem, 
based on establishing a link between the notion of 
spectrality and a geometric notion which we refer to as ``weak tiling''.

\begin{definition}
Let $\Om \subset \R^d$ be a bounded, measurable set.
We say that another measurable, possibly unbounded, 
set $\Sig \subset \R^d$
admits a \emph{weak tiling} by translates of $\Om$,
if there exists a positive, locally finite  (Borel) measure $\mu$
on $\R^d$ such that $\1_{\Om} \ast \mu = \1_{\Sig}$ a.e.
\end{definition}

If the measure $\mu$ is the sum of unit masses at the points of a locally
finite set $\Lambda$, that is,  $\mu = \sum_{\lam \in \Lam} \delta_\lam$, then
the condition $\1_{\Om} \ast \mu = \1_{\Sig}$ a.e.\ means
that the collection 
$\{\Om + \lam\}$, $\lam \in \Lam$, of translated copies of $\Om$,
constitutes a partition of $\Sig$ up to measure zero. 
In this case, we say that the weak tiling is a \emph{proper tiling}.

For example, one can check that any bounded set $\Om$ of positive 
Lebesgue measure  tiles the  whole space  $\R^d$ weakly by translates 
with respect to the measure $d\mu  = m(A)^{-1} \, dx$ (where $dx$ denotes
 the Lebesgue measure on $\R^d$). This is in sharp contrast to the obvious fact
that not every set $\Om$ can tile the space 
properly by translations.

We will prove the following theorem, which gives a necessary
 condition for spectrality in terms of weak tiling:

\begin{thm}
\label{thmA11}
Let $\Om$ be a bounded, measurable set  in $\R^d$. If $\Om$ is spectral,
then its complement $\Om^\cm = \R^d \setminus \Om$
 admits a weak tiling by translates of $\Om$. That is,
 there exists a positive, locally finite measure $\mu$ such that
$\1_{\Om} \ast \mu = \1_{\Om^\cm}$ a.e.
\end{thm}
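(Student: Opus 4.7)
The plan is to construct the desired measure $\mu$ as a renormalized diffraction measure of a spectrum of $\Om$. Let $\Lam$ be a spectrum for $\Om$; after translating we may assume $0 \in \Lam$. Orthogonality of $E(\Lam)$ gives $\ft{\1}_{\Om}(\lam - \lam') = 0$ for distinct $\lam, \lam' \in \Lam$, and Parseval for the orthonormal basis $\{|\Om|^{-1/2}e_\lam\}$ applied to $e_\xi\1_\Om$ yields
\begin{equation*}
\sum_{\lam \in \Lam} |\ft{\1}_{\Om}(\xi - \lam)|^2 = |\Om|^2 \qquad (\xi \in \R^d),
\end{equation*}
from which a standard Landau-type argument forces $\Lam$ to be uniformly discrete with uniform (Beurling) density $|\Om|$.

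Following the construction from crystallographic diffraction theory, form the autocorrelation measures
\begin{equation*}
\gam_R := \frac{1}{|B_R|}\sum_{\lam, \lam' \in \Lam \cap B_R} \delta_{\lam - \lam'},
\end{equation*}
each positive, positive-definite, reflection-symmetric, and supported on $\Lam - \Lam$. Uniform discreteness of $\Lam$ makes the family $\{\gam_R\}$ uniformly translation-bounded, so a subsequential weak$^*$ limit $\gam$ exists by vague compactness and inherits these properties. In particular $\gam(\{0\}) = |\Om|$ (the density of $\Lam$), and crucially $\supp(\gam) \subset \Lam - \Lam \subset \{0\} \cup \zft{\Om}$. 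By the Bochner--Schwartz theorem, $\ft\gam$ is then a positive, translation-bounded Borel measure on $\R^d$.

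I claim that $\ft\gam \ge |\Om|^2\,\delta_0$ as measures, so that
\begin{equation*}
\mu := \frac{1}{|\Om|^2}\bigl(\ft\gam - |\Om|^2\,\delta_0\bigr)
\end{equation*}
is a positive, locally finite measure. This comes from a Wiener-type identity: for a translation-bounded positive-definite measure $\gam$, the atom of $\ft\gam$ at the origin equals $\lim_{T\to\infty}|B_T|^{-1}\gam(B_T)$, and counting pairs $(\lam, \lam') \in \Lam^2$ with $\lam - \lam' \in B_T$ using the density $|\Om|$ of $\Lam$ shows this limit is $|\Om|^2$. With $\mu$ in hand, the weak tiling identity $\1_\Om * \mu = \1_{\Om^\cm}$ follows by Fourier duality: since $\supp(\gam) \subset \{0\} \cup \zft{\Om}$ and $\ft{\1}_\Om$ vanishes on $\zft{\Om}$, the product $\ft{\1}_\Om \cdot \gam$ collapses to $\ft{\1}_\Om(0)\,\gam(\{0\})\,\delta_0 = |\Om|^2\,\delta_0$, and applying Fourier inversion (using reflection-symmetry of $\gam$) gives $\1_\Om * \ft\gam = |\Om|^2$ a.e., whence
\begin{equation*}
\1_\Om * \mu = \frac{1}{|\Om|^2}\bigl(|\Om|^2 - |\Om|^2\,\1_\Om\bigr) = \1_{\Om^\cm}.
\end{equation*}

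The main obstacle is establishing the Bragg-peak lower bound $\ft\gam \ge |\Om|^2\,\delta_0$: the Bochner--Schwartz theorem supplies the positivity of $\ft\gam$ as a measure but not the size of its atom at the origin, so this step crucially uses the Wiener-type averaging identity combined with the Beurling density of the spectrum. The remainder of the argument is clean Fourier-duality bookkeeping that leverages the tight support condition $\supp(\gam) \subset \{0\} \cup \zft{\Om}$ coming from orthogonality.
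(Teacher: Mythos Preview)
Your approach follows the same diffraction-theoretic blueprint as the paper: build the autocorrelation $\gamma$ of a spectrum $\Lambda$, exploit the support condition $\supp(\gamma) \subset \{0\} \cup \zft{\Om}$ to obtain a convolution identity for $\ft\gamma$, and extract $\mu$ from $\ft\gamma$ by subtracting a point mass at the origin. The real divergence is in how you justify that $\mu$ is \emph{positive}, i.e.\ that $\ft\gamma(\{0\})$ is large enough. You invoke a Wiener-type identity $\ft\gamma(\{0\}) = \lim_{T}|B_T|^{-1}\gamma(B_T)$ and then evaluate the right-hand side by ``counting pairs''. Both ingredients are under-justified. The Wiener formula you quote requires the mean of $\gamma$ to exist, which you do not verify; and the pair-counting is carried out heuristically for $\Lambda$ rather than for the vague-limit measure $\gamma$, so you are silently interchanging the limits $R\to\infty$ and $T\to\infty$. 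Note also that from your own identity $\1_\Om \ast \ft\gamma = |\Om|^2$ one only gets the \emph{upper} bound $\ft\gamma(\{0\})\le|\Om|^2$, so the Wiener step is doing essential work and cannot be waved away.

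The paper closes this gap by a different and cleaner route. Normalizing by point count, it introduces alongside $\gamma_{r_n}=|\Lambda_{r_n}|^{-1}\delta_{\Lambda_{r_n}}\ast\delta_{-\Lambda_{r_n}}$ the auxiliary sequence $\nu_n=|\Lambda_{r_n}|^{-1}\delta_{\Lambda}\ast\delta_{-\Lambda_{r_n}}$ (full $\Lambda$ against truncated $-\Lambda$), shows that $\nu_n$ and $\gamma_{r_n}$ share the vague limit $\gamma$, and observes that each $\nu_n$ is an average of translates of $\delta_\Lambda$. The Parseval identity you already stated, rewritten as $f\ast\delta_\Lambda=1$ with $f=|\Om|^{-2}|\ft\1_\Om|^2$, then passes to the limit to give $f\ast\gamma=1$ a.e. Taking Fourier transforms yields $\ft f\cdot\ft\gamma=\delta_0$, and since $\ft f(x)=|\Om|^{-2}m(\Om\cap(\Om+x))$ is strictly positive on the open set $\Delta(\Om)$, one reads off $\ft\gamma=|\Om|\,\delta_0$ on all of $\Delta(\Om)$---not merely the size of the atom at $0$. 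Positivity of $\mu$ is then immediate. This uses the completeness of the spectrum in a direct way (through $f\ast\delta_\Lambda=1$) rather than filtering it through density and a second-moment computation.
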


Our proof of this result involves a construction due to Hof \cite{Hof95},
that is often used in mathematical crystallography in order 
to describe the diffraction pattern of an atomic structure
(see also \cite[Chapter 9]{BG13}).

Notice that if
 the complement $\Om^\cm$ has a \emph{proper} tiling by translates
 of $\Om$, then it just means that $\Om$ can tile the space by translations. 
\thmref{thmA11} thus establishes a weak form of the
``spectral implies tiling'' part of  Fuglede's conjecture, which is valid 
for all bounded, measurable sets $\Om \subset \R^d$. 
We observe 
that the weak tiling conclusion cannot be strengthened to
proper tiling without imposing extra assumptions on the set $\Om$,
since there exist examples of spectral sets which cannot tile
by translations.

We will prove  that if $\Om$ is a convex body in $\R^d$, and if
it can tile its complement $\Om^\cm$ weakly by 
translations, then $\Om$ must in fact be a convex polytope (\thmref{thmE5}).
We will also prove that  if, in addition,
 $\Om$ is centrally symmetric and has centrally symmetric facets, 
then each belt of $\Om$  must have either $4$ or $6$ facets 
(\thmref{thmJ1.0}). So in the latter case, it follows that $\Om$
can in fact tile its complement $\Om^\cm$ not only weakly, but even
 properly, by translations.

The potential applications of \thmref{thmA11} are not limited 
to the class of convex bodies in $\R^d$.
As an  example,  we will use this theorem
to give a simple geometric condition which is
necessary for the spectrality of a bounded,
measurable set $\Om \subset \R^d$
(\thmref{thmA2}). Based on this
condition we will  prove that the boundary of a bounded,
open spectral set 
must have Lebesgue measure zero (\thmref{thmB1}).

\subsection{}
The rest of the paper is organized as follows.

In \secref{sect:prelim} we present some preliminary background.
We fix notation that will be used in the paper and discuss
basic results about measures and tempered distributions,
spectral sets and weak tiling.

In \secref{sect:diffract} we prove
 that if  a bounded, measurable set  
$\Om \subset \R^d$ is spectral, then its complement $\Om^\cm$
 admits a weak tiling by translates of $\Om$
(\thmref{thmA11}).  As an application we
show that a connected spectral domain cannot
have any ``holes'',
and that the boundary of an 
open spectral domain must have Lebesgue
 measure zero.

 In \secref{sect:polytope} we prove that if a convex body 
$K \subset \R^d$ is a spectral set,
then $K$ must be  a convex polytope (\thmref{thmA20}).

In the last two Sections \ref{sect:belti} and \ref{sect:beltii} we establish that
each belt of a spectral convex polytope 
$K \subset \R^d$ 
 must consist of either $4$ or $6$ facets (\thmref{thmA21}).
The proof is based on an analysis 
 of the measure $\mu$ 
that provides a weak tiling of $K^\cm$
by translates of $K$.


\section{Preliminaries}
\label{sect:prelim}

\subsection{Notation}
If $A \subset \R^d$ then $\interior(A)$ will denote the 
interior of $A$, and $\bd{A}$ the boundary of $A$.
We use  $A^\cm$ to denote the 
complement   $\R^d \setminus A$ of the set $A$.
If $A,B \subset \R^d$ then
$A \triangle B$ is the symmetric difference
of $A$ and $B$.
We denote by $|A|$ the number of elements in $A$.

If $A \subset \R^d$, then for each $\tau \in \R^d$
we let $A + \tau = \{a+ \tau: a \in A\}$ denote
 the image of $A$ under translation by the vector $\tau$. If $s \in \R$,
then $sA = \{s a : a \in A\}$ will denote the image of $A$ 
under dilation with ratio $s$. If $A, B $ are two subsets of $\R^d$, then
$A+B$ and $A-B$ denote respectively their set of sums and set of 
differences.

We use $\dotprod{\cdot}{\cdot}$ and $|\cdot|$ to denote
the Euclidean  scalar product and norm in $\R^d$.

By a \emph{lattice} in $\R^d$ we mean a set $L$ which can
be obtained as  the image of $\Z^d$ under an
invertible linear transformation. The \emph{dual lattice} $L^*$ is the set of all 
vectors $\lambda^* \in \R^d$ such that $\dotprod{\lambda}{\lambda^*}
 \in \Z$ for every $\lambda \in L$.

We denote by $m(A)$ the Lebesgue measure of a set $A \subset \R^d$. 
We also use  $m_k(A)$ to denote the $k$-dimensional volume measure
of $A$ (so, in particular, $m_d(A) = m(A)$).

If $A \subset \R^d$ is a bounded, measurable set, then we define
\[
	\Delta(A) := \{x \in \R^d: m(A \cap (A + x)) > 0\}.
\]
Then $\Delta(A)$ is a bounded open set, and 
 we have $\Delta(A) = - \Delta(A)$
(which means that $\Delta(A)$ is 
 symmetric with respect to the origin).
One can think of the set $\Delta(A)$ as the measure-theoretic analog of the
set of differences $A-A$. In particular, one can check that if $A$ is an open 
set then  $\Delta(A) = A-A$.
In general we  have $\Delta(A) \subset A-A$, but this inclusion can be strict.

The Fourier transform of a function $f \in L^1(\R^d)$ is defined by
\[
\ft f (t)=\int_{\R^d} f (x) \, e^{-2\pi i\langle t,x\rangle} dx.
\]

\subsection{Measures and distributions}
By a ``measure'' we will refer to a Borel (either positive, or complex)
measure on $\R^d$.
We use $\supp(\mu)$ to denote the closed support of a measure $\mu$.
We denote by $\delta_\lam$ the Dirac measure consisting of a unit
mass at the point $\lam$. If $\Lam \subset \R^d$  is   a countable set,
then we define $\delta_\Lam := \sum_{\lam \in \Lam} \delta_\lam$.

If $\alpha$ is a tempered distribution on $\R^d$,
 and if $\varphi$ is a Schwartz function on  $\R^d$, then we use
$\dotprod{\alpha}{\varphi}$ to denote the action of $\alpha$ on
 ${\varphi}$. 
A tempered distribution $\alpha$ is  \emph{positive} if  we have
$\dotprod{\alpha}{\varphi} \geq 0$
for any Schwartz function $\varphi \geq 0$.
 If a tempered distribution  $\alpha$
 is  positive, then $\alpha$ is a positive measure. 
The Fourier transform $\ft\alpha$ of a tempered distribution $\alpha$ is defined by 
$\dotprod{\ft\alpha}{\varphi} = \dotprod{\alpha}{\ft\varphi}$.
A tempered distribution 
$\alpha$ is said to be \emph{positive-definite} if $\ft\alpha$ is a positive 
distribution. See \cite[Section 8.4]{BG13}, \cite{Rud91}.

If $\mu$ is a  measure on $\R^d$, then $\mu$ is said to be 
\emph{locally finite} if we have $|\mu|(B) < \infty$ for every open ball $B$.

A measure $\mu$ on $\R^d$  is said to be \emph{translation-bounded}
 if for every (or equivalently, for some) open ball $B$ we have 
\[
\sup_{x \in \R^d} |\mu|(B+x) < \infty.
\]
If a measure $\mu$ on $\R^d$ is  translation-bounded,
then it is a tempered distribution. 
If $\mu$  is a translation-bounded measure  on $\R^d$, and if $\nu$
is a finite measure  on $\R^d$, then the convolution $\mu \ast \nu$
is a translation-bounded measure.

\begin{lem}
\label{lemB1}
Let $\nu$ be a finite measure on $\R^d$, and suppose that
$\mu$ is a translation-bounded measure on $\R^d$
whose Fourier transform $\ft{\mu}$ is a locally finite measure.
Then the Fourier transform of the convolution $\mu \ast \nu$
 is the measure $\ft{\mu} \cdot \ft{\nu}$.
\end{lem}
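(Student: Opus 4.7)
The plan is to verify that both sides agree as tempered distributions by pairing with an arbitrary Schwartz test function and reducing everything to a single double integral.

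First I would note that both sides are well-defined objects. The convolution $\mu \ast \nu$ is translation-bounded (this is stated just above the lemma), so $\widehat{\mu \ast \nu}$ makes sense as a tempered distribution. On the other side, $\hat{\nu}$ is a bounded continuous function (Fourier transform of a finite measure), so the product $\hat{\mu} \cdot \hat{\nu}$ is a well-defined locally finite measure, and in particular a distribution. A preliminary observation I would use is that because $\hat{\mu}$ is a locally finite measure which is also a tempered distribution, it is automatically a tempered measure, so Schwartz functions are integrable against $|\hat{\mu}|$ and the distributional pairing $\langle \hat{\mu}, \psi \rangle$ coincides with $\int \psi \, d\hat{\mu}$ for every Schwartz $\psi$.

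The core computation is as follows. Fix a Schwartz function $\varphi$. Then
\[
\langle \widehat{\mu \ast \nu}, \varphi \rangle \;=\; \langle \mu \ast \nu, \hat{\varphi} \rangle \;=\; \iint \hat{\varphi}(x+y)\, d\mu(x)\, d\nu(y),
\]
where the last equality uses that $\hat{\varphi}$ is Schwartz and $\mu \ast \nu$ acts as integration on Schwartz functions. The key identity is that, writing $e_y(t) := e^{-2\pi i \dotprod{t}{y}}$, the function $x \mapsto \hat{\varphi}(x+y)$ is itself the Fourier transform of the Schwartz function $\varphi \cdot e_y$. Hence for each fixed $y$,
\[
\int \hat{\varphi}(x+y)\, d\mu(x) \;=\; \langle \mu, \widehat{\varphi \cdot e_y}\rangle \;=\; \langle \hat{\mu}, \varphi \cdot e_y \rangle \;=\; \int \varphi(t)\, e^{-2\pi i \dotprod{t}{y}}\, d\hat{\mu}(t).
\]
Integrating against $\nu$ and swapping the order of integration gives
\[
\langle \widehat{\mu \ast \nu}, \varphi \rangle \;=\; \int \varphi(t)\left[\int e^{-2\pi i \dotprod{t}{y}}\, d\nu(y)\right] d\hat{\mu}(t) \;=\; \int \varphi(t)\,\hat{\nu}(t)\, d\hat{\mu}(t) \;=\; \langle \hat{\mu}\cdot\hat{\nu}, \varphi\rangle,
\]
which is the desired identity.

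The main technical point — and the one I would be most careful about — is justifying the two exchanges of integration. For the initial double integral, I would use that $\hat{\varphi}$ is a Schwartz function, that $\nu$ is finite, and that $\mu$ is translation-bounded (the last giving a bound $\int |\hat{\varphi}(\cdot+y)|\, d|\mu| \le C$ uniform in $y$, obtained by summing Schwartz decay over unit cubes). For the second application of Fubini, the bound $|\varphi(t)e^{-2\pi i \dotprod{t}{y}}| = |\varphi(t)|$ together with finiteness of $\nu$ and temperedness of $|\hat{\mu}|$ supplies the required integrability. Once both swaps are justified, the three displays above assemble into the proof.
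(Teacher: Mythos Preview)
Your proof is correct. The paper does not actually prove this lemma; it merely cites \cite[Section 8.6]{BG13} and \cite[Section 2.5]{KL20} for the result. Your argument --- pairing both sides with a Schwartz function, using the modulation identity $\widehat{\varphi \cdot e_y}(x) = \hat{\varphi}(x+y)$, and two applications of Fubini --- is the standard direct route, and your care about the integrability hypotheses (translation-boundedness of $\mu$ for the first swap, temperedness of $|\hat{\mu}|$ and finiteness of $\nu$ for the second) is exactly what is needed. The one point worth flagging is your preliminary observation that a locally finite measure which is a tempered distribution is automatically a tempered measure (i.e.\ has polynomially bounded mass on balls, so that Schwartz functions are $|\hat{\mu}|$-integrable); this is true but not entirely trivial for complex measures, and in a fully self-contained write-up you might cite or sketch it.
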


See e.g.\ \cite[Section 8.6]{BG13}, \cite[Section 2.5]{KL21}.

A sequence of measures $\{\mu_n\}$ is said to be
\emph{uniformly translation-bounded} if 
for every (or equivalently, for some) open ball $B$ one can find a constant $C$ 
not depending on $n$, such that
$\sup_x |\mu_n|(B+x) \leq C$ for every $n$.

If $\{\mu_n\}$ is a uniformly translation-bounded sequence of measures,
then we say that $\mu_n$ \emph{converges vaguely} to a measure $\mu$ if for
every continuous, compactly supported function $\varphi$ we have
$\int \varphi \, d\mu_n \to \int \varphi \, d\mu$. In this case, the 
vague limit  $\mu$
must also be a translation-bounded measure. For a uniformly translation-bounded 
sequence of measures  $\{\mu_n\}$  to converge vaguely,
it is necessary and sufficient that $\{\mu_n\}$  converge
 in the space of tempered distributions.
From any uniformly translation-bounded sequence of 
measures $\{\mu_n\}$  one can extract
a vaguely convergent subsequence $\{\mu_{n_j}\}$.

\begin{lem}
\label{lemC2}
Let $f \in L^1(\R^d)$,  and let $\{\mu_n\}$ be a uniformly
 translation-bounded sequence of measures
on $\R^d$, such that $f \ast \mu_n = 1$ a.e.\ for every $n$. 
If $\mu_n$ converges vaguely to a measure $\mu$ then also
$f \ast \mu = 1$ a.e.
\end{lem}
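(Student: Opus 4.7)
The plan is to reduce the identity $f \ast \mu = 1$ a.e.\ to a convergence statement to which vague convergence of $\mu_n$ to $\mu$ can be applied. For any $\varphi \in C_c(\R^d)$, the uniform translation-boundedness of $\{\mu_n\}$ together with $f \in L^1(\R^d)$ justifies Fubini's theorem and gives
$$\int \varphi(x)\,(f \ast \mu_n)(x)\, dx \;=\; \int (\varphi \ast \tilde f)(y)\, d\mu_n(y), \qquad \tilde f(x) := f(-x),$$
and the same identity holds with $\mu$ in place of $\mu_n$, noting that $\mu$ is itself translation-bounded as a vague limit of a uniformly translation-bounded sequence. Since $f \ast \mu_n = 1$ a.e., the left-hand side equals $\int \varphi\, dx$. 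Setting $g := \varphi \ast \tilde f$, the desired conclusion $f \ast \mu = 1$ a.e.\ thus follows once I establish that $\int g\, d\mu_n \to \int g\, d\mu$ for every $\varphi \in C_c(\R^d)$.

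The main obstacle is that $g$ is generally \emph{not} compactly supported, so vague convergence of $\mu_n$ to $\mu$ cannot be applied to $g$ directly. The key observation is that $g$ decays at infinity in a quantitative, summable manner that is compatible with the translation-boundedness of the $\mu_n$. Specifically, if $\supp(\varphi) \subset B(0, R_\varphi)$, then the representation $g(y) = \int \varphi(y + w)\, f(w)\, dw$ yields
$$\sup_{y \in Q + k} |g(y)| \;\leq\; \|\varphi\|_\infty \int_{B(-k,\, R_\varphi + \sqrt{d})} |f(w)|\, dw, \qquad k \in \Z^d,\ \ Q := [0,1)^d,$$
and summing over $k$ (each $w$ lies in only boundedly many of the balls $B(-k, R_\varphi + \sqrt{d})$) gives $\sum_{k \in \Z^d} \sup_{Q+k} |g| \leq C(R_\varphi, d)\, \|f\|_1 < \infty$.

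To finish, I would choose a cutoff $\chi_R \in C_c(\R^d)$ with $0 \leq \chi_R \leq 1$, equal to $1$ on $B(0, R)$ and supported in $B(0, R+1)$. Combining the summability above with the uniform bound $\sup_n \sup_x |\mu_n|(Q + x) \leq C_0$ (and the same bound for $\mu$) yields
$$\int |g|\,(1 - \chi_R)\, d|\mu_n| \;\leq\; C_0 \sum_{|k| \gtrsim R} \sup_{Q + k} |g| \;\xrightarrow[R \to \infty]{}\; 0,$$
uniformly in $n$, and the analogous estimate holds for $\mu$. The truncated function $g \chi_R$ is continuous and compactly supported, so vague convergence gives $\int g \chi_R\, d\mu_n \to \int g \chi_R\, d\mu$ as $n \to \infty$. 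Letting first $n \to \infty$ and then $R \to \infty$ yields $\int g\, d\mu_n \to \int g\, d\mu$, and hence $\int \varphi\,(f \ast \mu)\, dx = \int \varphi\, dx$ for all $\varphi \in C_c(\R^d)$, which forces $f \ast \mu = 1$ a.e.
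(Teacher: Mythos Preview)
Your proof is correct, but it takes a different route from the paper's. The paper avoids the ``$g$ is not compactly supported'' issue entirely by reordering the convolutions: it observes that $\mu_n \ast \varphi$ is a uniformly bounded sequence of continuous functions converging pointwise to $\mu \ast \varphi$ (this is immediate from vague convergence and uniform translation-boundedness), and then applies the dominated convergence theorem to conclude that $f \ast (\mu_n \ast \varphi) \to f \ast (\mu \ast \varphi)$ pointwise. Associativity of the convolution (Fubini) turns this into $(f \ast \mu_n) \ast \varphi \to (f \ast \mu) \ast \varphi$, and since the left side is identically $\int \varphi$, one concludes $(f\ast\mu)\ast\varphi = \int\varphi$ for all $\varphi\in C_c$, hence $f\ast\mu = 1$ a.e.

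Your argument instead pairs $f\ast\mu_n$ against $\varphi$ directly and is then forced to integrate the non-compactly-supported function $g=\varphi\ast\tilde f$ against $\mu_n$; you handle this by showing that $g$ lies in the Wiener amalgam space $W(C_0,\ell^1)$ (the summability $\sum_k \sup_{Q+k}|g|<\infty$) and combining this with the uniform translation bound to get uniform tail control. This is a perfectly valid and standard approach, and it has the advantage of making transparent exactly which function space is in play. The paper's reordering trick, on the other hand, is shorter and sidesteps the explicit tail estimate altogether: the dominated convergence theorem does all the work once the convolution with $\varphi$ is moved inside.
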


\begin{proof}
Let $\varphi$ be a continuous, compactly supported function on $\R^d$.
Then the sequence of functions $ \mu_n \ast \varphi $ is uniformly bounded and
converges pointwise to $\mu \ast \varphi$. Hence by the 
dominated convergence theorem we have $f \ast 
(\mu_n \ast \varphi) \to f \ast (\mu \ast \varphi)$ pointwise.
In turn, this implies that $(f \ast \mu_n) \ast \varphi \to (f \ast \mu) \ast \varphi$ 
pointwise, since the convolution is associative  (by Fubini's theorem).
But $f \ast \mu_n = 1$ a.e.\ for every $n$, so  we conclude that
$(f \ast \mu) \ast \varphi = \int \varphi$. Since this is true
for an arbitrary $\varphi$, the assertion follows.
\end{proof}

\subsection{Spectra}
If $\Om$ is a bounded, measurable set in $\R^d$ of positive measure, then
by a \emph{spectrum} for $\Om$ we mean a countable set $\Lambda\subset\R^d$
such that the system of exponential functions $E(\Lam)$ defined by \eqref{eqI1.1} is 
orthogonal and complete in the space $L^2(\Om)$.

For any two points $\lam,\lam'$ in $\R^d$ we have
$\dotprod{e_\lambda}{e_{\lambda'}}_{L^2(\Om)} = \hat{\1}_\Om(\lambda'-\lambda)$, where
$\ft{\1}_\Om$ is the Fourier transform of the indicator function $\1_\Om$ of the set $\Om$.
The orthogonality of the system $E(\Lambda)$ in $L^2(\Om)$ is therefore equivalent to the condition
\begin{equation}
	\label{eqP1.2}
	\Lambda-\Lambda \subset  \zft{\Om} \cup \{0\}, 
\end{equation}
where $\zft{\Om} := \{ t \in \R^d : \hat{\1}_\Om(t)=0\}$ is the set of 
zeros of the function $\hat{\1}_\Om$.

A set $\Lambda\subset \R^d$ is said to be \emph{uniformly discrete} if there is
$\delta>0$ such that $|\lambda'-\lambda|\ge \delta$ for any two distinct points
$\lambda,\lambda'$ in $\Lambda$. 
The condition \eqref{eqP1.2} implies that every spectrum $\Lambda$ of $\Om$
is a uniformly discrete set.

The set $\Lambda$ is  \emph{relatively dense} if
there is $R >0$ such that every ball of radius $R$ contains at least one point from $\Lam$.
 It is well-known that if $\Lam$ is a spectrum for $\Om$,
then $\Lam$ must be a relatively dense 
set (see e.g.\ \cite[Section 2C]{GL17}).

The following lemma gives a frequently used characterization of the
spectra of $\Om$.

\begin{lem}[{see \cite[Section 3.1]{Kol04}}]
	\label{lemC1.7}
	Let $\Om$  be a bounded, measurable set in $\R^d$, and define
	the function $f := m(\Om)^{-2} \, |\ft{\1}_\Om|^2$. 
	Then a set $\Lam\subset\R^d$ is a spectrum for $\Om$ 
	if and only if $f \ast \delta_\Lam = 1$ a.e.
\end{lem}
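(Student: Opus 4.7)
The equation $(f \ast \delta_\Lam)(t) = 1$ unfolds to $\sum_{\lam \in \Lam} |\ft{\1}_\Om(t-\lam)|^2 = m(\Om)^2$. I would first rewrite this in Hilbert space terms: for fixed $t \in \R^d$ set $g_t(x) := e^{2\pi i \dotprod{t}{x}} \1_\Om(x) \in L^2(\Om)$, so that a direct computation yields $\dotprod{g_t}{e_\lam}_{L^2(\Om)} = \ft{\1}_\Om(\lam - t)$ and $\|g_t\|^2_{L^2(\Om)} = m(\Om)$. Thus $(f \ast \delta_\Lam)(t) = 1$ is exactly Parseval's identity for the normalized system $\{m(\Om)^{-1/2} e_\lam\}_{\lam \in \Lam}$ applied to $g_t$.

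\textbf{Forward direction.} If $\Lam$ is a spectrum, then $\{m(\Om)^{-1/2} e_\lam\}_{\lam \in \Lam}$ is an orthonormal basis of $L^2(\Om)$, so Parseval's identity holds for every vector in $L^2(\Om)$; applying it to $g_t$ for every $t$ gives $f \ast \delta_\Lam = 1$ identically.

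\textbf{Reverse direction.} Assume $f \ast \delta_\Lam = 1$ a.e., and put $h(t) := \sum_{\lam \in \Lam} |\ft{\1}_\Om(t-\lam)|^2$. Because $h$ is a sum of non-negative continuous functions, it is lower semi-continuous, so $\{h > m(\Om)^2\}$ is open; since this set has Lebesgue measure zero, it must be empty, giving $h(t) \le m(\Om)^2$ \emph{pointwise}. Evaluating at any $t = \lam_0 \in \Lam$ isolates the term $|\ft{\1}_\Om(0)|^2 = m(\Om)^2$ and forces $\ft{\1}_\Om(\lam_0 - \lam) = 0$ for every $\lam \ne \lam_0$, so $E(\Lam)$ is orthogonal. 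For completeness, once the system is orthogonal, Parseval for the vector $g_t$ is equivalent to $g_t$ lying in the closed subspace $S := \overline{\operatorname{span}\, E(\Lam)} \subset L^2(\Om)$. The map $t \mapsto g_t$ from $\R^d$ into $L^2(\Om)$ is continuous (by dominated convergence), so the set $\{t : g_t \in S\}$ is closed; by hypothesis it has full measure, hence equals $\R^d$. Finally, $\{g_t\}_{t \in \R^d}$ is total in $L^2(\Om)$: any $\psi \in L^2(\Om)$ orthogonal to every $g_t$ satisfies $\ft{\psi \1_\Om} \equiv 0$ and so vanishes. Therefore $S = L^2(\Om)$ and $\Lam$ is a spectrum.

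\textbf{Main obstacle.} The delicate step is upgrading the \emph{a.e.} hypothesis to the \emph{pointwise} orthogonality statement $\ft{\1}_\Om(\lam_0 - \lam) = 0$; the lower semi-continuity of $h$ is the key input that bridges this gap, and the symmetric closedness/density argument converts the a.e.\ Parseval identity into genuine completeness.
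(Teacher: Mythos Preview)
The paper does not supply its own proof of this lemma; it simply records the statement with a citation to \cite[Section 3.1]{Kol04}. So there is no ``paper's proof'' to compare against directly.

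Your argument is correct. The forward direction is immediate from Parseval, as you say. In the reverse direction, the two upgrade steps you flag are exactly the right ones: lower semicontinuity of $h$ forces the pointwise bound $h\le m(\Om)^2$, and evaluating at $t=\lam_0\in\Lam$ then pins down orthogonality; once the system is orthogonal, continuity of $t\mapsto g_t$ together with closedness of $S$ upgrades the a.e.\ Parseval identity to an everywhere identity, and the totality of $\{g_t\}$ in $L^2(\Om)$ finishes completeness. This is essentially the standard argument that appears in Kolountzakis's survey, so your approach matches the intended one.

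One very minor remark: after you have established orthogonality, Bessel's inequality already gives $h(t)\le m(\Om)^2$ for \emph{every} $t$, so you could in principle drop the separate lower-semicontinuity step and run the closedness argument directly. But invoking lower semicontinuity first, before orthogonality is known, is a clean way to get orthogonality in the first place, so your ordering is natural.
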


\subsection{Weak tiling}
If  $\Om$ is a bounded, measurable set in $\R^d$, and if
$\Sig$ is another, possibly unbounded, measurable  set in $\R^d$,
then we say that $\Sig$
admits a \emph{weak tiling} by translates of $\Om$,
if there exists a positive, locally finite measure $\mu$
on $\R^d$ such that $\1_{\Om} \ast \mu = \1_{\Sig}$ a.e.

The following lemma shows that the measure $\mu$
in any weak tiling is not only locally finite, but in fact 
 must be  translation-bounded.

\begin{lem}
\label{lemA8.7}
Let $\Om \subset \R^d$ be a bounded,
 measurable set of positive Lebesgue measure,
and suppose that $\mu$ is a positive, locally finite measure such that
$\1_\Om \ast \mu \leq 1$ a.e. Then $\mu$ is a
 translation-bounded measure.
\end{lem}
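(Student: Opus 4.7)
The plan is to exploit the pointwise a.e.\ inequality $\1_\Om \ast \mu \leq 1$ by integrating it over a sufficiently large ball and applying Tonelli's theorem. Since $\Om$ is bounded, I would first choose $r > 0$ such that $\Om$ is contained in the open ball $B_r$ of radius $r$ centered at the origin, and then aim to bound $\mu(B_r + x_0)$ uniformly over $x_0 \in \R^d$.

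The main step is to integrate the hypothesis over the translated ball $B_{2r} + x_0$, producing a bound of $m(B_{2r})$ on the left-hand side. By Tonelli's theorem (both factors are non-negative), the left-hand side equals $\int m\bigl((x + \Om) \cap (B_{2r} + x_0)\bigr)\, d\mu(x)$. The radii are chosen precisely so that for every $x \in B_r + x_0$ the translate $x + \Om$ lies entirely in $B_{2r} + x_0$, so the inner integrand equals $m(\Om)$ throughout $B_r + x_0$. Restricting the integration to this region yields
\[
m(\Om) \cdot \mu(B_r + x_0) \leq m(B_{2r}),
\]
which gives the desired uniform bound since the right-hand side does not depend on $x_0$.

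I do not anticipate any substantive obstacle here: the argument is essentially a two-line Tonelli computation, with the only trick being the geometric choice of an outer ball of twice the radius so that all translates of $\Om$ by points of the inner ball remain inside it. The positivity of $\mu$ is used only to justify unrestricted swapping of integrals, the hypothesis $m(\Om) > 0$ is what allows division at the final step, and the local finiteness of $\mu$ ensures that all integrals involved make classical sense.
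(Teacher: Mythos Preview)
Your proof is correct and is essentially the same argument as the paper's: both integrate the inequality $\1_\Om \ast \mu \leq 1$ against the indicator of a large ball and use the inclusion $x + \Om \subset B_{2r} + x_0$ for $x \in B_r + x_0$ to extract the factor $m(\Om)\,\mu(B_r + x_0)$. The paper phrases this as the pointwise convolution inequality $\1_{B_s} \ast \1_\Om \geq m(\Om)\,\1_{B_r}$ followed by associativity of convolution, which unwinds to exactly your Tonelli computation.
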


\begin{proof}
Fix $r>0$, and let  $B_r$ be the open ball of radius
$r$ centered at the origin.
It will be enough to show that there is a constant
$M > 0$ such that $(\1_{B_r} \ast \mu)(x) \leq M$ for every 
$x \in \R^d$.  Since $\Om$ is a bounded set,  we can choose
a sufficiently  large number $s>0$ such that we have 
$B_s + y \supset B_r$ for every $y \in \Om$.  It follows that
$\1_{B_s} \ast \1_\Om \geq m(\Om) \, \1_{B_r}$. In turn,
this implies that
\[
m(\Om) \, \1_{B_r} \ast \mu \leq ( \1_{B_s} \ast \1_\Om ) \ast \mu
=  \1_{B_s} \ast (\1_\Om  \ast \mu)
\leq \1_{B_s} \ast 1 = m(B_s).
\]
We thus see that the constant $M := m(B_s) / m(\Om)$
satisfies $\1_{B_r} \ast \mu \leq M$  as needed.
\end{proof}

The next lemma implies that if $\Om$ tiles $\Sig$ weakly
by translations with respect to a measure $\mu$, then
we must have $m((\Om + t) \cap \Sig^\cm) = 0$ for
every $t \in \supp(\mu)$.

\begin{lem}
\label{lemA8.2}
Let $\Om$  be a bounded,  measurable set in $\R^d$, and let 
$\mu$ be a positive, locally finite measure on $\R^d$. Suppose that we have 
$\1_\Om \ast \mu = 0$ a.e.\ on the complement $\Sig^\cm$ of
another measurable set $\Sig \subset \R^d$. 
Then $m((\Om+t) \cap \Sig^\cm) = 0$
 for every $t \in \supp(\mu)$.
\end{lem}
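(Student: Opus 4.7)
The plan is to reduce the pointwise statement ``$\phi(t)=0$ for every $t\in\supp(\mu)$'' (where $\phi(t):=m((\Omega+t)\cap\Sigma^{\cm})$) to an integrated statement $\int\phi\,d\mu=0$, and then upgrade this from $\mu$-a.e.\ to every point of $\supp(\mu)$ by proving that $\phi$ is continuous. The only real input beyond Fubini is the boundedness of $\Omega$, which guarantees $\1_{\Omega}\in L^{1}(\R^{d})$ and hence makes the relevant convolution continuous.

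First I would rewrite $\phi$ as a convolution. A direct change of variables gives
\[
\phi(t)=\int \1_{\Omega}(x-t)\,\1_{\Sigma^{\cm}}(x)\,dx = (\1_{\Sigma^{\cm}}\ast \1_{-\Omega})(t).
\]
Because $\1_{-\Omega}\in L^{1}(\R^{d})$ (as $\Omega$ is bounded) and $\1_{\Sigma^{\cm}}\in L^{\infty}(\R^{d})$, the continuity of translation in $L^{1}$ yields that $\phi$ is a (uniformly) continuous function on $\R^{d}$. In particular the set $\{\phi>0\}$ is open.

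Next I would apply Tonelli's theorem to the assumption $\1_{\Omega}\ast\mu=0$ a.e.\ on $\Sigma^{\cm}$:
\[
0=\int_{\Sigma^{\cm}}(\1_{\Omega}\ast\mu)(x)\,dx
=\int\!\!\int \1_{\Omega}(x-s)\,\1_{\Sigma^{\cm}}(x)\,dx\,d\mu(s)
=\int \phi(s)\,d\mu(s).
\]
(The use of Tonelli is justified since $\mu$ is positive and locally finite and the integrand is nonnegative.) Since $\phi\ge 0$ and $\mu\ge 0$, it follows that $\phi=0$ $\mu$-almost everywhere, i.e.\ $\mu(\{\phi>0\})=0$.

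Finally, suppose for contradiction that some $t\in\supp(\mu)$ satisfies $\phi(t)>0$. By continuity of $\phi$ there is an open ball $B$ around $t$ with $B\subset\{\phi>0\}$, and then $\mu(B)\le\mu(\{\phi>0\})=0$, contradicting $t\in\supp(\mu)$. Hence $\phi(t)=0$ for every $t\in\supp(\mu)$, which is the claim. I do not foresee any genuine obstacle; the one step that needs care is verifying the continuity of $\phi$, which is precisely where the hypothesis that $\Omega$ is bounded (and not merely measurable of finite measure in a weaker sense) enters.
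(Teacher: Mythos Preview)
Your proof is correct and follows essentially the same approach as the paper: define $\varphi(t)=m((\Omega+t)\cap\Sigma^{\cm})$, use Fubini/Tonelli to obtain $\int\varphi\,d\mu=0$, and combine this with the continuity of $\varphi$ and the definition of $\supp(\mu)$ to reach a contradiction. The only minor quibble is your closing remark that boundedness (rather than just finite measure) of $\Omega$ is what makes $\1_{-\Omega}\in L^{1}$; in fact $m(\Omega)<\infty$ already suffices for that, though boundedness is the standing hypothesis here.
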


\begin{proof}
Let $\varphi(t) := m((\Om+t) \cap \Sig^\cm)$, $t \in \R^d$.
Suppose to the contrary that $\varphi(t) > 0$ for some
$t \in \supp(\mu)$.  Since $\varphi$  is a continuous function, 
there is an open neighborhood $U$ of $t$ such that
$\varphi(t') > 0$ for all $t' \in U$. Since $t \in \supp(\mu)$ we must have $\mu(U)> 0$,
and it follows that $\int \varphi \, d\mu > 0$. But on the other hand,
using Fubini's theorem we have
\[
\int \varphi \, d\mu 
= \int_{\Sig^\cm}  ( \1_\Om \ast \mu ) \, dm = 0,
\]
since we have assumed that
$\1_\Om \ast \mu = 0$ a.e.\ on $\Sig^\cm$.
We thus  obtain a contradiction.
\end{proof}

\begin{corollary}
\label{corA8.1}
Let $\Om$ be a bounded, measurable set  in $\R^d$. Assume that the
complement $\Om^\cm$ of $\Om$ admits a weak tiling by translates of $\Om$,
that is, there is a positive, locally finite measure $\mu$ such that
$\1_\Om \ast \mu = \1_{\Om^\cm}$ a.e. 
Then $\supp(\mu) \subset \Delta(\Om)^\cm$.
\end{corollary}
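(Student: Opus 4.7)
The plan is to deduce this directly from \lemref{lemA8.2} by choosing the right $\Sig$. In that lemma, the outcome ``$m((\Om + t) \cap \Sig^\cm) = 0$ for every $t \in \supp(\mu)$'' is precisely of the shape we want, provided we can arrange that $\Sig^\cm = \Om$ (so that the conclusion becomes $m((\Om+t) \cap \Om) = 0$, i.e.\ $t \notin \Delta(\Om)$).

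The natural choice is therefore $\Sig := \Om^\cm$, so that $\Sig^\cm = \Om$. By hypothesis we have
\[
\1_\Om \ast \mu = \1_{\Om^\cm} \quad \text{a.e.},
\]
which in particular means that $\1_\Om \ast \mu = 0$ a.e.\ on $\Om = \Sig^\cm$. This is exactly the hypothesis required to invoke \lemref{lemA8.2}. Applying that lemma yields $m((\Om+t) \cap \Om) = 0$ for every $t \in \supp(\mu)$, which by the definition of $\Delta(\Om)$ is equivalent to saying $t \in \Delta(\Om)^\cm$. Since this holds for every $t \in \supp(\mu)$, we conclude $\supp(\mu) \subset \Delta(\Om)^\cm$.

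There is no real obstacle here; the statement is essentially a reformulation of \lemref{lemA8.2} in the specific case $\Sig = \Om^\cm$, together with unpacking the definition of $\Delta(\Om)$. The only thing worth being careful about is the direction of the set-complement bookkeeping (the lemma speaks of $\Sig^\cm$, and we must realize that taking $\Sig = \Om^\cm$ is what turns the statement into one about $\Om$ itself).
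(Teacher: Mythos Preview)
Your proof is correct and is exactly the paper's own argument: the corollary is obtained from \lemref{lemA8.2} by taking $\Sig = \Om^\cm$, so that $\Sig^\cm = \Om$ and the conclusion $m((\Om+t)\cap\Om)=0$ for all $t\in\supp(\mu)$ reads $\supp(\mu)\subset\Delta(\Om)^\cm$.
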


Indeed, this follows from \lemref{lemA8.2} in the special case when $\Sig = \Om^\cm$.


\section{Spectrality and weak tiling}
\label{sect:diffract}

In this section, our main goal is to prove 
 \thmref{thmA11}, which states that if  a bounded, measurable set  
$\Om \subset \R^d$ is spectral, then its complement $\Om^\cm$
 admits a weak tiling by translates of $\Om$. 
The proof of this result involves a construction due to Hof \cite{Hof95},
that is often used in mathematical crystallography in order 
to describe the diffraction pattern of an atomic structure.

We also present some simple applications of 
 \thmref{thmA11}. As an example, we will use it
to prove that if a bounded, open set  $\Om \subset \R^d$
  is spectral, then its boundary must have Lebesgue measure zero.
Some other  examples  will also be discussed.

\subsection{}
In  \cite{Hof95}, Hof suggested a mathematical model for 
the  diffraction experiment used  in crystallography  to analyze the
atomic structure of a solid. In this model, the configuration of the
atoms is represented by a uniformly discrete  and relatively
dense set $\Lam \subset \R^d$.
Hof used volume averaged convolution of the two measures 
$\delta_\Lam$ and $\delta_{-\Lam}$  in order
to define the \emph{autocorrelation measure} $\gamma$ of the set $\Lam$.
He then argued that the diffraction by the atomic
structure is described by the Fourier
transform $\ft \gam$ of the autocorrelation, which is a positive measure
called the \emph{diffraction measure} of the set $\Lam$
(for more details, see \cite[Chapter 9]{BG13}).

Here, we will apply this technique to a set $\Lam$ that
constitutes a spectrum for a bounded, measurable set
$\Om \subset \R^d$. We will obtain the following result:

\begin{thm}
\label{thmA8}
Let $\Om$ be a bounded, measurable set  in $\R^d$.
If $\Om$ is a spectral set, then there exists a measure
$\gamma$ on $\R^d$ with the following properties:
\begin{enumerate-alph}
	\item \label{thmA8.1}
	$\gamma$ is a positive, translation-bounded measure;
	\item \label{thmA8.2}
	the support of $\gamma$ is contained in the closed set $\zft{\Om} \cup \{0\}$;
	\item \label{thmA8.3}
	$\gamma = \delta_0$ in some open neighborhood of the origin;
	\item \label{thmA8.4}
	$\ft{\gamma}$ is also a positive, translation-bounded measure;
	\item \label{thmA8.5}
	$\ft{\gamma} = m(\Om) \, \delta_0$ in the open set $\Delta(\Om)$.
\end{enumerate-alph}
\end{thm}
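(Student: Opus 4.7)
My plan is to follow the autocorrelation construction from diffraction theory, applied to a spectrum $\Lam$ of $\Om$. Since $\Lam$ is uniformly discrete with some separation $\delta>0$, and relatively dense, I would set
\[
	\gamma_R := \frac{1}{|\Lam \cap B_R|}\, \delta_{\Lam \cap B_R} \ast \delta_{-(\Lam \cap B_R)}, \qquad R > 0,
\]
a positive finite measure supported on $\Lam - \Lam$ that assigns mass exactly $1$ to the origin (the diagonal pairs $\lambda=\lambda'$ in the convolution). Uniform discreteness of $\Lam$ bounds, for each ball $B$, the number of $\lambda'\in\Lam$ with $\lambda - \lambda' \in B$ by a constant depending only on $B$, which gives uniform translation-boundedness of the family $\{\gamma_R\}$. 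Extracting a vague limit $\gamma$ along a subsequence then yields a positive translation-bounded measure, which is (a). Since \eqref{eqP1.2} gives $\Lam - \Lam \subset \zft{\Om}\cup\{0\}$ and the latter set is closed, both the $\gamma_R$ and the limit $\gamma$ are supported there, which is (b). For (c), each $\gamma_R$ restricted to the ball of radius $\delta$ around the origin equals $\delta_0$, so the same holds for $\gamma$.

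For (d), a direct computation gives
\[
	\ft{\gamma}_R(t) = \frac{1}{|\Lam \cap B_R|}\, \Bigl| \sum_{\lambda \in \Lam \cap B_R} e^{-2\pi i \dotprod{\lambda}{t}} \Bigr|^2 \ge 0.
\]
Because the $\gamma_R$ are uniformly translation-bounded, vague convergence coincides with convergence in the space of tempered distributions, so $\ft{\gamma}_{R_j} \to \ft{\gamma}$ as tempered distributions. Positivity of $\ft{\gamma}_R$ passes to the limit, making $\ft{\gamma}$ a positive tempered distribution, and hence a positive measure by the general fact recalled in \secref{sect:prelim}. Translation-boundedness of $\ft{\gamma}$ is the standard translation-boundedness of the diffraction measure of a translation-bounded positive definite measure (see \cite[Chapter 9]{BG13}).

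The main obstacle is (e), which I plan to establish by first proving $f \ast \gamma = 1$ a.e., where $f := m(\Om)^{-2}|\ft{\1}_\Om|^2$ is the function from \lemref{lemC1.7}. A direct application of \lemref{lemC2} to $\gamma_R$ fails because truncating $\Lam$ to $B_R$ on both convolution factors breaks the identity $f\ast\delta_\Lam=1$. The remedy is to introduce the auxiliary sequence
\[
	\tilde{\gamma}_R := \frac{1}{|\Lam \cap B_R|}\, \delta_{\Lam \cap B_R} \ast \delta_{-\Lam},
\]
still uniformly translation-bounded, for which the evenness of $f$ combined with $f\ast\delta_\Lam=1$ a.e.\ gives $f\ast\delta_{-\Lam}=1$ a.e., and hence $f \ast \tilde{\gamma}_R = 1$ a.e.\ exactly. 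A boundary-layer count --- pairs $(\lambda,\lambda')$ with $\lambda\in\Lam\cap B_R$, $\lambda'\in\Lam\setminus B_R$ and $\lambda-\lambda'$ in a fixed compact set force $\lambda$ into a thin annular shell near $\partial B_R$, contributing only $O(R^{d-1})$ pairs against a normalization of order $R^d$ --- shows $\tilde{\gamma}_R - \gamma_R \to 0$ vaguely, so $\tilde{\gamma}_{R_j} \to \gamma$ along the same subsequence. Then \lemref{lemC2} gives $f \ast \gamma = 1$ a.e.

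Taking Fourier transforms in this identity produces the distributional relation $\ft{f}\cdot\ft{\gamma} = \delta_0$. A direct computation identifies $\ft{f}$ with the continuous function $m(\Om)^{-2}\, m(\Om \cap (\Om+x))$, which is strictly positive on the open set $\Delta(\Om)$ and takes the value $1/m(\Om)$ at the origin. Since $1/\ft{f}$ is therefore a positive continuous function on $\Delta(\Om)$, multiplying the distributional identity by it on this open set yields $\ft{\gamma} = m(\Om)\,\delta_0$ on $\Delta(\Om)$, which is (e). The delicate points in this plan are thus the construction of $\tilde{\gamma}_R$ satisfying the exact convolution identity, and the boundary-layer estimate that lets $\tilde{\gamma}_R$ and $\gamma_R$ share a vague limit; the rest is formal manipulation of positive-definite measures.
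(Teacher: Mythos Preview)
Your proposal is correct and follows essentially the same autocorrelation construction as the paper: the paper's auxiliary sequence $\nu_n = |\Lam_{r_n}|^{-1}\,\delta_\Lam \ast \delta_{-\Lam_{r_n}}$ is the mirror image of your $\tilde\gamma_R = |\Lam_R|^{-1}\,\delta_{\Lam_R}\ast\delta_{-\Lam}$, with the paper using the observation that $\nu_n$ is an average of translates of $\delta_\Lam$ in place of your appeal to the evenness of $f$, and the paper gives a short self-contained argument for the translation-boundedness of $\ft\gamma$ where you cite \cite{BG13}. Otherwise the construction, the boundary-layer estimate, and the passage to $\ft f\cdot\ft\gamma=\delta_0$ are identical.
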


A similar result can be proved in the more general context of locally compact 
abelian groups, but in this paper we work in the euclidean setting only.

We remark that a
 minor difference in our proof compared to the construction
in  \cite{Hof95}, is that we  define  the autocorrelation measure $\gam$ 
not as the volume averaged convolution of the two measures 
$\delta_\Lam$ and $\delta_{-\Lam}$, but instead as
a convolution averaged with respect to the 
number of points in $\Lam$. It turns out that using
this normalization allows in our context to establish more directly the desired properties
of the measure $\gam$.

\begin{proof}[Proof of \thmref{thmA8}]
Suppose that $\Om$ is a spectral set, and let
 $\Lam$ be a spectrum for $\Om$. Then $\Lam$ is a uniformly discrete 
and relatively dense set in $\R^d$. For  each $r>0$ we denote
$\Lam_r := \Lam \cap B_r$, where $B_r$ is the open ball of radius
$r$ centered at the origin. Then there is $r_0$ such that $\Lam_r$
is nonempty for every $r \geq r_0$. Consider a family of measures
$\{\gamma_r\}$, $r \geq r_0$, defined by
$\gamma_r := |\Lam_r|^{-1} \, \delta_{\Lam_r} \ast \delta_{-\Lam_r}$.
Each $\gamma_r$ is a positive, finite
 measure on $\R^d$, whose support is the finite set
$\Lam_r - \Lam_r$. We have
$\ft{\gamma}_r(t) = |\Lam_r|^{-1} \, |\ft{\delta}_{\Lam_r}(t)|^2 \geq 0$,
hence $\gamma_r$ is a positive-definite measure. Since
$\Lam$ is  uniformly discrete, the measures
$\gamma_r$ are uniformly translation-bounded, namely,
for any open ball $B$ we have
$\sup_x \gamma_r(B+x) \leq C(\Lam,B)$,
where $C(\Lam,B)$ is a constant which does not depend on $r$.
It follows that we may choose a sequence $r_n \to \infty$ such that
if we define $\mu_n := \gamma_{r_n}$, then the sequence
$\{\mu_n\}$ converges vaguely  to some measure $\gamma$ on $\R^d$.

The  measure $\gamma$  is 
 translation-bounded, positive and positive-definite. It follows from
the positive-definiteness of $\gamma$ that
 its Fourier transform $\hat{\gamma}$ is a positive measure.
Given any open ball $B$, let $\varphi_B$ be a Schwartz function  
such that $\varphi_B \geq \1_B$. Then we have
\[
\ft{\gamma}(B+x)  \leq \int \varphi_B(y-x) \, d \, \ft{\gamma}(y) =
  \int e^{-2 \pi i \dotprod{x}{t}} \, \ft{\varphi}_B(t) \, d\gamma(t),
\]
and hence
\[
\sup_{x \in \R^d} 
\ft{\gamma}(B+x)   \leq 
\int |\ft{\varphi}_B(t)| \, d\gamma(t) < \infty
\]
(the last integral is finite since $\ft{\varphi}_B$ has fast
decay, while $\gamma$ is  translation-bounded).
We conclude that $\ft{\gamma}$ is a translation-bounded measure. 

Define now another sequence of measures $\{\nu_n\}$ by
$\nu_n := |\Lam_{r_n}|^{-1} \, \delta_{\Lam} \ast \delta_{-\Lam_{r_n}}$.
The measures $\nu_n$ are uniformly translation-bounded, again due to the
uniform discreteness of $\Lam$. We claim that the sequence  
$\{\nu_n\}$ converges vaguely to the same limit as
the sequence  $\{\mu_n\}$, namely, to the measure $\gamma$.
To see this, it will be enough to check that the sequence of differences
$\{\nu_n - \mu_n\}$ converges vaguely to zero. Indeed, we have
\[
\nu_n - \mu_n=
|\Lam_{r_n}|^{-1} \, \delta_{\Lam \setminus \Lam_{r_n}} \ast \delta_{-\Lam_{r_n}}.
\]
For any fixed $s >0$, the total mass of the measure
$\nu_n - \mu_n$ in the open ball $B_s$ is equal to 
$|\Lam_{r_n}|^{-1}$ times the number of
pairs $(\lam,\lam') \in \Lam\times \Lam$ such that
$|\lam|<r_n$, $|\lam'|\geq r_n$ and $|\lam'-\lam|<s$.
As $n \to \infty$, the number of such pairs is not greater than a constant multiple
of $r_n^{d-1}$ since $\Lam$ is a uniformly discrete set,
while the number of elements in the set
$\Lam_{r_n}$ is not less than a constant multiple of
$r_n^d$ since $\Lam$ is also relatively dense. Hence $|\nu_n - \mu_n|(B_s)$ 
tends to zero as $n \to \infty$. Since this is true for any $s>0$, it follows 
that the sequence $\{\nu_n - \mu_n\}$ converges to zero, 
and so the sequence $\nu_n$ converges to $\gamma$.

Let $f := m(\Om)^{-2} \, |\ft{\1}_\Om|^2$.
Since $\Lam$ is a spectrum for $\Om$,  by \lemref{lemC1.7}
we have $f \ast \delta_\Lam = 1$ a.e.
As $\nu_n$ is an average of translates of the measure $\delta_\Lam$,
we also have  $f \ast \nu_n = 1$ a.e.\ for every $n$. Since 
the measures $\nu_n$ are uniformly translation-bounded
and converge vaguely to $\gamma$, it follows 
from \lemref{lemC2} that  $f \ast \gamma = 1$ a.e.\ as well.
In turn, using \lemref{lemB1} this implies that 
$\ft{f} \cdot \ft{\gamma} = \delta_0$.
Since $\ft{f}(0) = \int f = m(\Om)^{-1}$, we deduce that 
$\ft{\gamma} =  m(\Om) \, \delta_0$ in the 
open set $\{t : \ft{f}(t) \neq 0\}$. But the Fourier transform of $f$ is the
function  $\ft{f} = m(\Om)^{-2} \, \1_\Om \ast \1_{-\Om}$, that is, we have
$\ft{f}(t) = m(\Om)^{-2} \, m(\Om \cap (\Om+t))$ for every $t \in \R^d$.
 Hence $\ft{f}(t) \neq 0$ if and only if $t \in \Delta(\Om)$, and we conclude
that $\ft{\gamma} =  m(\Om) \, \delta_0$ in the open set $\Delta(\Om)$.

Since $\Lam$ is a uniformly discrete set, there exists $\eps > 0$ such that
$(\Lam - \lam) \cap B_\eps = \{0\}$
for every $\lam \in \Lam$.  In other words, for every $\lam \in \Lam$ we have
$\delta_\Lam \ast \delta_{-\lam} = \delta_0$ in the ball $B_\eps$.
 The measure $\nu_n$ is an average of measures of
the form $\delta_\Lam \ast \delta_{-\lam}$ $(\lam \in \Lam)$, so we also
have $\nu_n = \delta_0$ in the ball $B_\eps$. It follows that
the same is
true for the vague limit $\gamma$ of the sequence $\nu_n$, that is, we have
$\gamma = \delta_0$ in the open ball $B_\eps$.

The supports of all the measures $\mu_n$ are contained in
the set of differences
$\Lam-\Lam$. Since $\Lam$ is a spectrum for $\Om$, the set 
$\Lam-\Lam$ is contained in the closed  set $\zft{\Om} \cup \{0\}$.
The measure $\gamma$ is the vague limit of the sequence $\mu_n$, 
so it follows that the closed support of $\gamma$ is also contained
in the set $\zft{\Om} \cup \{0\}$. The theorem  is thus proved.
\end{proof}

\subsection{}
We can now use \thmref{thmA8} to deduce  \thmref{thmA11}.

\begin{proof}[Proof of \thmref{thmA11}]
Let $\Om$ be a bounded, measurable set  in $\R^d$, and assume
that $\Om$ is spectral. We must show that
$\Om^\cm$  admits a weak tiling by translates of $\Om$.

Indeed, let  $\gamma$ be the measure given by \thmref{thmA8}.
We have $\ft{\1}_{- \Om} (t) = \overline{\hat{\1}_{\Om}(t)}$,
and hence  $\supp(\gam)$ is contained in the set $\zft{-\Om} \cup \{0\}$. We also have
$\gamma = \delta_0$ in some open neighborhood of the origin.
 Since $\ft{\1}_{-\Om}(0) = m(\Om)$, it follows  that
$\gamma \cdot \hat{\1}_{-\Om} = m(\Om) \, \delta_0$.

The measure $\gam$ is translation-bounded, and its Fourier
transform $\ft{\gamma}$ is a translation-bounded measure as well.
Hence by \lemref{lemB1}, the Fourier
transform of the measure $\gamma \cdot \hat{\1}_{-\Om}$ is the function
$\ft{\gamma} \ast \1_{\Om}$. We thus conclude that
$\hat{\gamma} \ast \1_\Om = m(\Om)$ a.e.

The measure $\ft{\gamma}$ is positive, and satisfies 
$\ft{\gamma} = m(\Om) \, \delta_0$ in the open set $\Delta(\Om)$.
We can therefore write $ \ft{\gamma} =  m(\Om) \, ( \delta_0 + \mu )$,
where $\mu$ is a  positive, translation-bounded (and hence locally finite)
measure. The condition
$\hat{\gamma} \ast \1_\Om = m(\Om)$ a.e.\ is then equivalent to
$\1_\Om \ast \mu = \1_{\Om^\cm}$ a.e., and we obtain
that $\Om^\cm$  has a weak tiling by translates of $\Om$.
\end{proof}

\subsection{}
To illustrate the  construction of the autocorrelation measure
 $\gamma$ in the proof of \thmref{thmA8}, and  the corresponding weak tiling 
of $\Om^\cm$ obtained in \thmref{thmA11}, we now look at a few examples.

\begin{example} \label{expA8.1}
Assume that $\Om \subset \R^d$ tiles the space with respect
to a \emph{lattice}  translation set $L$. Let $\Lambda$ be a spectrum of $\Om$
 given by the dual lattice, that is, $\Lambda=L^*$. Then 
$\Lam - \lam = \Lam$ for every $\lam \in \Lam$, and hence the measures 
$\nu_n := |\Lam_{r_n}|^{-1} \, \delta_{\Lam} \ast \delta_{-\Lam_{r_n}}$,
whose vague limit is the 
autocorrelation measure $\gam$, satisfy  $\nu_n = \delta_\Lam$
for every $n$. We conclude that the autocorrelation measure $\gam$ is given by
$\gamma = \delta_\Lam = \delta_{L^*}$. In turn, by the Poisson
summation formula we  get 
$\ft{\gamma} = m(\Om) \, \delta_{L}$, and so the weak
tiling measure $\mu$ in \thmref{thmA11} is given by
$\mu = \delta_{L \setminus \{0\}}$. We thus see that
the weak tiling of $\Om^\cm$  is in this case a proper tiling.
\end{example}

\begin{example} \label{expA8.2}
Let $\Om = [0, \tfrac1{2}] \cup [1, \tfrac{3}{2}]$. Then
 $\Om$ tiles the real line $\R$ properly with respect to the 
translation set $\Lam = 2\Z \cup (2\Z + \tfrac1{2})$.
It is  well-known and not difficult to verify that the same
set $\Lam$ is also a spectrum for $\Om$. The set $\Lam$ is
periodic, but it is not a lattice. We calculate the autocorrelation 
measure $\gam$ as the vague limit of the measures
$\nu_n := |\Lam_{n}|^{-1} \, \delta_{\Lam} \ast \delta_{-\Lam_{n}}$,
where $\Lam_n := \Lam \cap [-2n, 2n)$. Then  all the measures
$\nu_n$, as well as the vague limit $\gam$, coincide with
the measure
$\sum_{k \in \Z} \cos^2( \tfrac{\pi k}{4} )\delta_{k/2}$
(so that, in fact, $\nu_n$ does not depend on $n$).
In turn, one can check using the Poisson summation formula that
$\ft \gam = \gam$. We thus obtain that the weak tiling
 measure $\mu$ in \thmref{thmA11} is a pure point measure 
(that is, $\mu$ has no continuous part), but  the
weak tiling of $\Om^\cm$ with respect to this measure $\mu$
  is not a proper tiling.
\end{example}

\begin{example} \label{expA8.3}
Let $\Om = [- \tfrac1{2}, \tfrac1{2}]^2$ be the unit cube in $\R^2$,
and let $\alpha$ be an irrational real number. Define $\Lambda$ to 
be the set of all points of the form 
$(n, n^2 \alpha + m)$ where $n$ and $m$ are integers.
It is easy to see that $\Om$ tiles the plane $\R^2$ with respect to the
translation set $\Lambda$. It is known, see \cite{JP99},
that the  translation sets for tilings by the unit cube
 $\Om$ coincide with the spectra of $\Om$.
Hence $\Lam$ is a spectrum for $\Om$.

In this example, we will calculate the autocorrelation measure 
$\gam$ as the vague limit of the measures
$\nu_N := |\Lam_{N}|^{-1} \, \delta_{\Lam} \ast \delta_{-\Lam_{N}}$,
where $\Lam_N := \Lam \cap Q_N$ is the intersection of $\Lam$ with
the cube $Q_N := [-N,N)^2$, that is, we will be averaging not with
respect to balls, but instead with respect to cubes
with sides parallel to the axes. Notice that the proof of \thmref{thmA8}
remains valid if the averages are taken with respect to cubes instead of
balls (in fact, Hof used cubes and not balls in \cite{Hof95}),
 although  the autocorrelation measure $\gam$ could, in principle,
depend on the shape over which the average is taken.

It follows from the fact that
 $\Lam - \Lam \subset \Z \times \R$ that all the measures
$\nu_N$, and hence also 
 the autocorrelation measure $\gam$, are  supported on
$\Z \times \R$. The restriction  of the
measure $\nu_N$ to a line of the form $\{h \} \times \R$,
where $h \in \Z$, is given by
\[
\frac{1}{2N} \sum_{-N \leq n < N} \sum_{k \in \Z}
\delta_{(h,  2  h n \alpha + h^2 \alpha + k).}
\]
Since $\alpha$ is irrational, it follows from Weyl's equidistribution theorem 
that $\nu_N$ converges vaguely to the one-dimensional Lebesgue measure
on each  line $\{h \} \times \R$ such that $h \neq 0$. On  the other hand, 
on the line $\{ 0 \} \times \R$ all the measures $\nu_N$ coincide
with $\delta_{\{ 0 \} \times \Z}$. We conclude that
$\gamma = \delta_0 \times \delta_{\Z} + \delta_{\Z \setminus \{0\}} \times m_1$,
where $m_1$ denotes the one-dimensional Lebesgue measure.
In turn, the diffraction measure $\ft \gam$ is given by
$\ft \gam =  \delta_{\Z}  \times \delta_0 + m_1 \times \delta_{\Z \setminus \{0\}}$.
So in this example, we obtain that
 the weak tiling of $\Om^\cm$ given  by \thmref{thmA11}
involves a measure $\mu$ that has both a pure point part
and a (singular) continuous part.
\end{example}

\subsection{}
Our main application of \thmref{thmA11} will be for the proof of 
Fuglede's conjecture for convex bodies in $\R^d$. This will be done in
the following sections. But before that, we mention some
simple applications of the theorem to  other classes of domains.

The following result, for instance, excludes many sets from being spectral:

\begin{thm}
\label{thmA2}
Let $\Om$ be a bounded, measurable set  in $\R^d$. Suppose that there exists a measurable set $S \subset \R^d$ with the following properties:
\begin{enumerate-roman}
	\item \label{thmA2.1}
	$m(S)>0$;
	\item \label{thmA2.2}
	$m(S \cap \Om)=0$;
	\item \label{thmA2.3}
	for every $x \in \R^d$, if $m((\Om+x) \cap S)>0$ then $m((\Om+x) \cap \Om)>0$.
\end{enumerate-roman}
Then $\Om$ cannot be a spectral set.
\end{thm}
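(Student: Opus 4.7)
The plan is to argue by contradiction using the weak tiling characterization of spectrality from Theorem~\ref{thmA11}.

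Suppose $\Om$ is spectral. Then Theorem~\ref{thmA11} yields a positive, locally finite measure $\mu$ on $\R^d$ with $\1_\Om \ast \mu = \1_{\Om^\cm}$ a.e. The strategy is to integrate this identity against $\1_S$ and derive a contradiction between what condition \ref{thmA2.1} forces on the left-hand side and what condition \ref{thmA2.3}, combined with \corref{corA8.1}, forces on the right-hand side.

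For the right-hand side, assumption \ref{thmA2.2} gives $m(S \cap \Om^\cm) = m(S) > 0$, so
\[
\int \1_S(y) \, (\1_\Om \ast \mu)(y)\, dy = m(S) > 0.
\]
By Fubini's theorem (justified because $\mu$ is locally finite and $S$ is the ambient variable of integration, with $\Om$ bounded), this integral equals
\[
\int m\bigl((\Om+t) \cap S\bigr)\, d\mu(t).
\]
So my plan is to show that this last integral must in fact vanish, which will be the contradiction.

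To do so, I will apply \corref{corA8.1}, which asserts $\supp(\mu) \subset \Delta(\Om)^\cm$. Since $\Delta(\Om)$ is open, this gives $\mu(\Delta(\Om))=0$, hence for $\mu$-almost every $t$ we have $m((\Om+t)\cap \Om)=0$. By the contrapositive of \ref{thmA2.3}, this forces $m((\Om+t)\cap S)=0$ for $\mu$-a.e.\ $t$, and therefore the integral $\int m((\Om+t)\cap S)\,d\mu(t)$ is zero, contradicting the value $m(S)>0$ computed above.

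The argument is short and there is no serious obstacle: the only point to verify carefully is the Fubini step and the fact that $\supp(\mu)\subset\Delta(\Om)^\cm$ implies $\mu(\Delta(\Om))=0$, which follows because $\Delta(\Om)$ is open (a property recorded in the Preliminaries). Everything else is a direct translation of hypotheses \ref{thmA2.1}--\ref{thmA2.3}.
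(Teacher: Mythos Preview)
Your proposal is correct and follows essentially the same approach as the paper's proof: both argue by contradiction via \thmref{thmA11}, integrate the weak tiling identity over $S$, apply Fubini to rewrite the result as $\int m((\Om+t)\cap S)\,d\mu(t)$, and then use \corref{corA8.1} together with condition~\ref{thmA2.3} to conclude this integral vanishes, contradicting $m(S)>0$. The only cosmetic difference is that the paper phrases the last step as ``$\varphi$ vanishes on $\Delta(\Om)^\cm \supset \supp(\mu)$'' while you phrase it as ``$\mu(\Delta(\Om))=0$, so $\varphi=0$ $\mu$-a.e.''; these are equivalent.
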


Informally speaking, the assumption in this theorem means
 that there exists a portion $S$ of  the complement $\Om^\cm$ of $\Om$, 
such that no translated copy $\Om+x$ of $\Om$ can even partly cover $S$
unless this translated copy also covers a part of 
$\Om$ as well. The theorem says that a set $\Om$ satisfying 
this assumption cannot be spectral.

\begin{proof}[Proof of \thmref{thmA2}]
Suppose to the contrary that $\Om$ is spectral. Then by \thmref{thmA11}
the complement $\Om^\cm$ of $\Om$ admits a weak tiling by translates of $\Om$,
so  there is a positive, locally finite measure $\mu$ such that
$\1_\Om \ast \mu = \1_{\Om^\cm}$ a.e. 
Using condition \ref{thmA2.2},  we then have
\begin{equation}
\label{eq:mes_s}
m(S) = m(S \cap \Om^\cm) =  \int_S \1_{\Om^\cm} \, dm = \int_S (\1_\Om \ast \mu) \, dm
= \int  \varphi \, d\mu,
\end{equation}
where $\varphi(x) := m((\Om+x) \cap S)$. The measure
$\mu$ is supported on $\Delta(\Om)^\cm$ by
\corref{corA8.1}, while the function $\varphi$ vanishes 
on  $\Delta(\Om)^\cm$ due to condition \ref{thmA2.3}.
Hence the right hand side of 
\eqref{eq:mes_s} must be zero. It follows that $m(S) = 0$, a
contradiction to condition \ref{thmA2.1}.
\end{proof}

In Figure \ref{fig:img0942} we illustrate an example of a planar domain
that  can be shown to be non-spectral using \thmref{thmA2}. The
essential feature of this domain is that it is connected but its complement 
is not connected, so that there is a ``hole'' inside the domain.
We  observe that if $S$ is a set of positive measure contained
in the hole,  then S satisfies the conditions \ref{thmA2.1}, \ref{thmA2.2} 
and \ref{thmA2.3} in \thmref{thmA2}, and therefore the  domain
cannot be spectral.


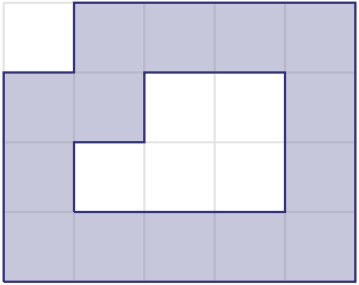
\begin{figure}[ht]
\centering

\begin{tikzpicture}[scale=0.925, style=mystyle]
\fill[myblue,opacity=0.275] 
	(1,1) -- (6,1) -- (6,5) -- (5,5) -- (2,5) -- 
	(2,4) -- (1,4) -- (1,3) -- (3,3) -- (3,4) -- 
	(5,4) -- (5,2) -- (2,2) -- (2,3) -- (1,3) -- (1,1);

\draw[gray,opacity=0.2] (1,1) grid (6,5);

\draw[myblue]
	(1,1) -- (6,1) -- (6,5) -- (5,5) -- 
	(2,5) -- (2,4) -- (1,4) -- (1,1);

\draw[myblue] 
	(2,2) -- (5,2) -- (5,4) -- 
	(3,4) -- (3,3) -- (2,3) -- (2,2);
\end{tikzpicture}

\caption{The illustration presents a planar domain that 
	can be shown to be non-spectral using \thmref{thmA2}.}
\label{fig:img0942}
 \end{figure}


As another example, we can use  \thmref{thmA2} to obtain the following result:

\begin{thm}
\label{thmB1}
Let $\Om$ be a bounded, open set  in $\R^d$. If $\Om$ is spectral,
then its boundary $\bd{\Om}$ must be a set of Lebesgue measure zero.
\end{thm}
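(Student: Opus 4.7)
The plan is to apply \thmref{thmA2} directly, taking $S$ to be the boundary $\bd{\Om}$ itself and arguing by contradiction. So suppose $\Om$ is a bounded, open spectral set, and assume toward a contradiction that $m(\bd{\Om}) > 0$. I will verify that $S := \bd{\Om}$ satisfies the three hypotheses \ref{thmA2.1}, \ref{thmA2.2}, \ref{thmA2.3} of \thmref{thmA2}, which will give the desired contradiction with the assumed spectrality of $\Om$.

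Condition \ref{thmA2.1} is just our contradictory assumption $m(S) > 0$. Condition \ref{thmA2.2} is immediate: since $\Om$ is open, $\bd{\Om}$ and $\Om$ are disjoint, so $m(S \cap \Om) = 0$. The only nontrivial point is condition \ref{thmA2.3}, and this is where the openness of $\Om$ enters essentially. Suppose $x \in \R^d$ is such that $m((\Om + x) \cap \bd{\Om}) > 0$; then in particular this intersection is nonempty, so there exists a point $y \in \Om + x$ with $y \in \bd{\Om}$. Since $\Om + x$ is open, some open ball $B$ around $y$ is contained in $\Om + x$. Since $y$ is a boundary point of $\Om$, the ball $B$ must meet $\Om$, and $B \cap \Om$ is a nonempty open subset of $\R^d$, hence has positive Lebesgue measure. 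Because $B \subset \Om + x$, we conclude
\[
m((\Om+x) \cap \Om) \geq m(B \cap \Om) > 0,
\]
verifying \ref{thmA2.3}.

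With all three hypotheses confirmed, \thmref{thmA2} yields that $\Om$ is not spectral, contradicting our standing assumption. Hence $m(\bd{\Om}) = 0$. I do not anticipate a serious obstacle here: the only place one must be careful is in the implication ``$(\Om+x)\cap\bd{\Om}$ has positive measure $\Rightarrow$ $(\Om+x)\cap\Om$ has positive measure,'' where merely having a boundary point of $\Om$ inside the open set $\Om+x$ is upgraded, via openness, from ``intersection nonempty'' to ``intersection has positive measure.'' This is precisely the step that fails for general measurable $\Om$ and explains why the hypothesis that $\Om$ be open is needed in the statement.
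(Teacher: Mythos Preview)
Your proof is correct and follows essentially the same approach as the paper: set $S=\bd{\Om}$, verify the three hypotheses of \thmref{thmA2}, and use openness of $\Om$ to upgrade ``$(\Om+x)\cap\bd\Om$ nonempty'' to ``$(\Om+x)\cap\Om$ has positive measure.'' The only cosmetic difference is that the paper phrases the key step via ``$\bd\Om\subset\overline{\Om}$, so the open set $\Om+x$ meets $\Om$,'' whereas you pick a ball around a boundary point; these are the same argument.
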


\begin{proof}
Suppose that $\bd{\Om}$ has positive Lebesgue measure. 
We will show that the set $S := \bd{\Om}$
satisfies the conditions \ref{thmA2.1}, \ref{thmA2.2} and \ref{thmA2.3} 
in \thmref{thmA2}, and hence $\Om$ cannot be spectral.
 Indeed,  condition \ref{thmA2.1} holds by assumption.
Since $\Om$ is an open set, the two sets $\Om$ and $S$ are disjoint, hence
condition \ref{thmA2.2} holds as well. 
To verify condition \ref{thmA2.3}, we let $x \in \R^d$ be such that
 $m((\Om+x) \cap S)>0$. In particular, the set
 $(\Om+x) \cap S$ is nonempty. The set $\Om+x$ is open, while
 $S$ is contained in the closure of $\Om$, so it follows that
 $(\Om+x) \cap \Om$ must be nonempty. But the set $(\Om+x) \cap \Om$ is open, since it is 
the intersection of two open sets, so it can be nonempty only if
$m((\Om+x) \cap \Om) > 0$. This confirms
 condition \ref{thmA2.3}, and thus the proof is  concluded
by  \thmref{thmA2}. 
\end{proof}

On the other hand, if $\Om$ is a convex body in $\R^d$, then
there exists no set $S$ satisfying 
the conditions \ref{thmA2.1}, \ref{thmA2.2} and \ref{thmA2.3}
in \thmref{thmA2}. So in order to study  the spectrality problem 
for convex bodies, we must use the weak tiling condition
$\1_\Om \ast \mu = \1_{\Om^\cm}$ a.e.\ in a stronger way.
 This will be done  in the following sections.


\section{Spectral convex bodies are polytopes}
\label{sect:polytope}

In this section we  prove \thmref{thmA20}, which states that if
a convex body $\Om \subset \R^d$ is  spectral, then 
$\Om$ must  be  a convex polytope. This shows that
the first condition \ref{vm:i} in the Venkov-McMullen theorem
is  necessary  not only for tiling by translations, but also for the
spectrality of a convex body $\Om$.

The fact that a convex body which tiles by translations must be a  polytope,
is a classical result that is due to Minkowski, see \cite{McM80} 
or \cite[Section 32.2]{Gru07}. We will prove a stronger version of this result,
which involves only a weak tiling assumption:

\begin{thm}
\label{thmE5}
Let $K$ be a convex body in $\R^d$. If the complement $K^\cm$
of $K$ has a weak tiling by translates of $K$, then $K$ must be a 
convex polytope.
\end{thm}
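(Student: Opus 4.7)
The plan is to follow Minkowski's classical argument (that a convex body tiling $\R^d$ by translations must be a polytope), replacing the discrete translation set by the weak-tiling measure $\mu$. By \corref{corA8.1}, $\supp(\mu) \subset \R^d \setminus \Delta(K)$. For a convex body $K$ a straightforward check gives $\Delta(K) = \interior(K-K)$, and $K-K$ is itself a convex body, so
$$\supp(\mu) \subset \bd{K-K} \cup (K-K)^\cm.$$
Translates $K+t$ with $t \in (K-K)^\cm$ are disjoint from $K$. For $t \in \bd{K-K}$, fix an outward unit normal $u$ to $K-K$ at $t$; since $h_{K-K}(u) = h_K(u)+h_K(-u)$, the standard support-function computation yields
$$K \cap (K+t) \subset F(u) := \{x \in K : \dotprod{x}{u} = h_K(u)\},$$
the exposed face of $K$ with outward normal $u$, and $u$ is also a supporting normal to $K$ at every point of this intersection.

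Next, I would show that $\bd K$ is covered by such exposed faces. Fix $x_0 \in \bd{K}$ and an outward unit normal $v$ to $K$ at $x_0$. For small $s>0$, the point $x_s := x_0 + sv$ lies in $\interior(K^\cm)$, and the weak-tiling identity forces $\mu(x_s - K) > 0$. Any $t \in \supp(\mu)$ with $x_s \in K+t$ lies in $x_s - K$; combined with the support constraint $t \notin \interior(K-K)$ and the inclusion $x_0 - K \subset K-K$, this forces $t$ to lie within distance $O(s)$ of $\bd{K-K}$. A compactness argument along $s \to 0$ produces a limit point $t_\ast \in \supp(\mu) \cap \bd{K-K}$ with $x_0 \in K+t_\ast$, so $x_0 \in F(u_\ast)$ for some outward normal $u_\ast$ to $K-K$ at $t_\ast$, which is then a supporting normal to $K$ at $x_0$ as well.

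The main obstacle is to upgrade this pointwise covering to a \emph{finite} covering of $\bd K$ by exposed facets of $K$: once $\bd K = F(u_1) \cup \cdots \cup F(u_N)$ with distinct $u_i$, convexity immediately yields $K = \bigcap_{i=1}^N \{x : \dotprod{x}{u_i} \leq h_K(u_i)\}$, and so $K$ is a polytope. In the classical proper-tiling proof, local finiteness of the tiling set delivers finitely many adjacent translates for free; here $\supp(\mu)$ may be uncountable (cf.\ \exampref{expA8.3}), so a different mechanism is needed. The strategy I would pursue exploits translation-boundedness of $\mu$ (\lemref{lemA8.7}) together with the ``mass balance'' identity
$$\int m\bigl((K+t) \cap B\bigr) \, d\mu(t) = m(B \cap K^\cm),$$
valid for any bounded Borel set $B$. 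Applied to thin slabs $B$ outside candidate facets, this quantitatively bounds the $\mu$-mass concentrated near each face of $K-K$; supposing for contradiction that infinitely many distinct facet normals $u_n$ were needed to cover $\bd K$, passing to a convergent subsequence $u_n \to u_\infty$ and comparing the total volume the corresponding tangent translates can jointly fill with the required $m(B \cap K^\cm)$ should force either a violation of translation-boundedness or a degeneracy of $F(u_\infty)$.
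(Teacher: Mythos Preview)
Your proposal stops short of a proof: the final paragraph is a strategy sketch, and the mechanism you outline---passing to a convergent subsequence $u_n \to u_\infty$ and extracting a contradiction from volume comparison or translation-boundedness---does not obviously work. When the normals converge, the associated support sets of $K-K$ need not stay separated, so there is no evident disjointness to exploit, and a slab-volume estimate gives no direct handle on how many distinct normals can occur. You have correctly located the difficulty but not supplied the idea that resolves it.

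The paper closes the gap by shifting from covering $\bd K$ to localizing the mass of $\mu$. For each \emph{regular} boundary point $x$ with exterior normal $\xi$, one shows (\lemref{lemE1.2}) that $\mu\bigl(-S(K,-\xi)+x\bigr)\ge 1$: any translate $K+t$ with $t\in\Delta(K)^{\cm}$ that meets an arbitrarily small neighborhood of $x$ must have $t$ in any prescribed neighborhood of $-S(K,-\xi)+x$ (\lemref{lemE2.1}), so mass $<1$ there would leave the weak covering strictly below $1$ near $x$. A short direct computation (\lemref{lemE1.3}) then shows that for distinct normals $\xi\neq\xi'$ at regular points $x,x'$, the sets $-S(K,-\xi)+x$ and $-S(K,-\xi')+x'$ are \emph{disjoint}. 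All of them lie in the bounded set $K-K$, so if $K$ were not a polytope, the infinitely many distinct normals at regular points supplied by \lemref{lemE1.1} would force $\mu(K-K)=+\infty$, contradicting local finiteness. No limiting procedure on normals and no translation-boundedness estimate is needed; the whole argument runs on the unit mass lower bound together with disjointness. This is precisely the quantitative ingredient your outline is missing: your compactness step produces, for each boundary point, \emph{some} $t_\ast\in\supp(\mu)$, but gives neither a lower bound on the $\mu$-mass nearby nor any separation of the contributions coming from different normals.
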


Combining \thmref{thmE5} with \thmref{thmA11} yields that
any spectral convex body in $\R^d$ must be a convex polytope,
and \thmref{thmA20} thus follows.

The rest of the section is devoted to the proof of  \thmref{thmE5}.

\subsection{}
We start by recalling some basic facts about convex bodies in $\R^d$.
For more details, the reader is referred to \cite[Sections 4.1, 5.1, 14.1]{Gru07}.

Recall that a set $K \subset \R^d$ is called a \emph{convex body} 
if $K$ is a compact, convex set with nonempty interior. In what follows, we 
assume $K$  to be  a  convex body in $\R^d$.

A hyperplane $H$ is called a \emph{support hyperplane} of $K$ at 
a point $x \in \bd{K}$, if $x \in H$
and $K$ is contained in one of the two closed halfspaces 
bounded by $H$. In this case, we denote the
halfspace containing $K$ by $H^-$, and the other one by 
$H^+$. $H^-$ is called a \emph{support halfspace} of $K$ 
at the point $x$. It can be represented in the form
\[
H^- = \{z \in \R^d : \dotprod{z}{\xi} \leq \dotprod{x}{\xi} \},
\]
where $\xi$  is a vector in $\R^d$ such that
$|\xi|=1$. We call $\xi$ an \emph{exterior normal unit
vector} of $K$, or of $H$, at the  point $x$.

For each $x \in \bd{K}$, there exists at least one support hyperplane
$H$ of $K$ at $x$. The  support hyperplane at $x$ 
need not, in general, be unique. If the  support hyperplane 
 is unique then $x$ is called a \emph{regular} boundary point
of $K$, and otherwise it is called a \emph{singular} boundary 
point.

For each vector $\xi \in \R^d$, $|\xi|=1$, there is a unique
support hyperplane $H(K, \xi)$ of $K$ with exterior normal vector $\xi$.
The set $S(K, \xi) := K \cap H(K, \xi)$  is  called the \emph{support set}
of $K$ determined by $\xi$. The support set 
 is a compact, convex subset of $\bd{K}$.

A hyperplane $H$ is said to \emph{separate} two convex bodies
$K$ and $K'$, if  $K$ is contained in one of the two closed halfspaces 
bounded by $H$, while $K'$ is contained in the other closed halfspace.
Any two convex bodies $K$ and $K'$ with disjoint interiors have at
least one  separating hyperplane $H$.

A \emph{convex polytope}  $P \subset \R^d$ is the convex hull of a finite number
of points. Equivalently,  a convex polytope is a bounded set $P$ which
can be represented as the intersection
of finitely many closed halfspaces.

\subsection{}

\begin{lem}
\label{lemE1.1}
Let $K$ be a convex body in $\R^d$. If $K$ is not a polytope,
then there exists an infinite sequence $x_n$ of 
regular  boundary points of $K$ such that the corresponding
exterior normal unit vectors $\xi_n$ are distinct, 
that is, $\xi_n \neq \xi_m$ whenever $n \neq m$.
\end{lem}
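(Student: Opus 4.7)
The plan is to prove the contrapositive: if the set $R$ of exterior normal unit vectors realized at regular boundary points of $K$ is finite, then $K$ must be a convex polytope, contradicting the hypothesis. Once this is established, $R$ is infinite and the desired sequence follows by enumerating distinct normals $\xi_n \in R$ and choosing, for each, a regular boundary point $x_n$ with exterior normal $\xi_n$.

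Suppose then that $R = \{\xi_1, \dots, \xi_N\}$, and write $H_i := H(K, \xi_i)$ and $H_i^-$ for the corresponding support hyperplane and halfspace. First I would invoke the classical fact that the singular boundary points of a convex body form a set of $(d-1)$-dimensional Hausdorff measure zero (see e.g.\ \cite[Sec.\ 14.2]{Gru07}), so that the regular boundary points are dense in $\bd{K}$. Each regular boundary point has its (unique) exterior normal in $R$ and thus lies in some $H_i$; consequently the regular points are contained in the closed set $\bigcup_i H_i$, and by density $\bd{K} \subset \bigcup_i H_i$. Now set $P := \bigcap_{i=1}^{N} H_i^-$: then $K \subset P$, and $P$ is a finite intersection of closed halfspaces.

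It suffices to prove $K = P$, for then $P$ is bounded (since $P = K$) and hence a convex polytope, yielding the desired contradiction. Arguing by contradiction, suppose $y \in P \setminus K$ and pick any $p \in \interior K$. The segment $[p, y]$ starts in the open set $\interior K$ and exits the closed set $K$, so it first meets $\bd{K}$ at some point $z = p + t^{*}(y - p)$ with $t^{*} \in (0, 1)$. By the preceding paragraph, $z \in H_i$ for some $i$, so $\dotprod{z}{\xi_i}$ equals the supporting value $c_i$ of $H_i$. But $p \in \interior K$ forces $\dotprod{p}{\xi_i} < c_i$, and $y \in H_i^-$ gives $\dotprod{y}{\xi_i} \leq c_i$; convex-combining via $z = (1 - t^{*}) p + t^{*} y$ then produces $\dotprod{z}{\xi_i} < c_i$, a contradiction. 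The main delicacy is the density step yielding $\bd{K} \subset \bigcup_i H_i$, which rests essentially on the measure-theoretic smallness of the singular boundary points; the remainder is a standard convex-separation argument exploiting the strict supporting inequality at interior points of $K$.
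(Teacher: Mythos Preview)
Your proof is correct and follows essentially the same approach as the paper's: both rely on the density of regular boundary points in $\bd{K}$ and on the fact that if $\bd{K}$ is contained in a finite union of support hyperplanes, then $K$ equals the intersection of the corresponding halfspaces and is therefore a polytope. The paper packages this as an inductive construction (at step $n$ showing that $\bigcup_{j<n} S(K,\xi_j)$ cannot exhaust $\bd{K}$, and invoking the cited identity $K = \bigcap_{y \in \bd{K}} H_y^-$ from \cite[Section~4.1]{Gru07}), whereas you phrase it as a contrapositive and supply the short segment argument for $K = \bigcap_i H_i^-$ yourself; the difference is organizational rather than substantive.
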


\begin{proof}
We will rely on the fact that if $K \subset \R^d$ is a convex body,
then the set of regular boundary points of $K$ constitutes a dense
subset of $\bd{K}$ (see \cite[Section 5.1, Theorem 5.1 or Theorem 5.2]{Gru07}).

We construct the sequence $x_n$ by induction. Let $x_1$ be 
any regular  boundary point of $K$, and $\xi_1$ be the 
exterior normal unit vector at the point $x_1$. Now suppose
that the points $x_1, x_2, \dots, x_{n-1}$ have already been
chosen, that they are regular boundary points of $K$,
and their corresponding
exterior normal unit vectors $\xi_1, \xi_2, \dots, \xi_{n-1}$
 are distinct. 

Let
\[
F_n := \bigcup_{j=1}^{n-1} S(K, \xi_j),
\]
where $S(K, \xi_j)$ is the support set of $K$ determined by $\xi_j$. 
Then $F_n$ is a closed subset of $\bd{K}$. 
We claim that $F_n$ must be a proper subset
of $\bd{K}$. Indeed, suppose to the contrary that $\bd{K} = F_n$.
Then each point $y \in \bd{K}$ belongs to $S(K, \xi_{j(y)})$
for some $1 \leq j(y) \leq n-1$. Hence the closed halfspace
\[
H_y^- := \{z : \dotprod{z}{\xi_{j(y)}} \leq \dotprod{x_{j(y)}}{\xi_{j(y)}} \}
\]
is a support  halfspace of $K$ at the point $y$. It is known
(see \cite[Section 4.1, Corollary 4.1]{Gru07}) that if
for each   $y \in \bd{K}$, $H_y^-$ is a support  halfspace of $K$ at the point $y$,
then 
\[
K = \bigcap_{y \in \bd{K}} H_y^{-}.
\]
But the collection 
$\{ H_y^-  : y \in \bd{K} \}$ has at most $n-1$ distinct members.
It follows that $K$ is the intersection
of finitely many closed halfspaces,  hence $K$ is a convex polytope. Since
we have assumed that $K$ is not a polytope, we arrive at
a contradiction. This establishes our claim that $F_n$ must be a proper subset
of $\bd{K}$. 

Since $F_n$ is a closed set, it  follows 
that its complement $\bd{K} \setminus F_n$ is a nonempty, relatively
open subset of $\bd{K}$. The set of regular boundary points 
of $K$ is  dense in $\bd{K}$, hence we can choose a regular point
$x_n$ in $\bd{K} \setminus F_n$. The exterior normal unit vector
 $\xi_n$ at the point $x_n$ is then distinct from all the
vectors $\xi_1, \xi_2, \dots, \xi_{n-1}$. This completes the
inductive construction, and thus concludes the proof of the lemma.
\end{proof}

\subsection{}

\begin{lem}
\label{lemE2.1}
Let $K$ be a convex body in $\R^d$, let $x$ be a regular boundary 
point of $K$, and let $\xi$ be the exterior normal unit vector at the point $x$. 
Suppose that $U$ is an open neighborhood of the set $-S(K, -\xi) + x$,
where 
$S(K, -\xi)$ is the support set of $K$ determined by the vector $-\xi$.
Then there is an open neighborhood  $V$ of $x$ 
such that for any
 $t \in \Delta(K)^\cm$, the set $K+t$ cannot intersect
$V$ unless $t \in U$.
\end{lem}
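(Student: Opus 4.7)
The plan is a contradiction and compactness argument, pinning down the limiting translation vector via a hyperplane separation that is forced to coincide with the unique support hyperplane at $x$. Suppose no such $V$ exists. Then for each $n$ we can find $t_n \in \Delta(K)^\cm \setminus U$ together with a point $p_n \in (K + t_n)$ within distance $1/n$ of $x$. Writing $p_n = y_n + t_n$ with $y_n \in K$ and using the boundedness of $K$, the sequence $\{t_n\}$ is bounded, so after extracting a subsequence $t_n$ converges to some $t^* \in \R^d$. Since $\Delta(K)$ and $U$ are both open, their complements are closed, and $t^* \in \Delta(K)^\cm \setminus U$; since $K$ is closed and $y_n = p_n - t_n \to x - t^*$, we also have $y^* := x - t^* \in K$.

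The goal is then to show $y^* \in S(K, -\xi)$, since this yields $t^* = x - y^* \in -S(K,-\xi) + x \subset U$, contradicting $t^* \notin U$. The hypothesis $t^* \in \Delta(K)^\cm$ gives $m(K \cap (K + t^*)) = 0$, and since this intersection is convex it must have empty interior, forcing $\interior(K)$ and $\interior(K + t^*)$ to be disjoint. By the separation theorem for convex bodies recalled in the preliminaries, $K$ and $K + t^*$ admit a separating hyperplane $H_0$, on which the common point $x$ must lie. Hence $H_0$ is a support hyperplane of $K$ at $x$, and the regularity of $x$ forces $H_0 = H(K, \xi)$. Consequently $K + t^*$ is contained in the opposite halfspace $H^+$, which rewrites as $\dotprod{z + t^*}{\xi} \geq \dotprod{x}{\xi}$, i.e.\ $\dotprod{z}{\xi} \geq \dotprod{y^*}{\xi}$, for every $z \in K$. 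This says precisely that $y^*$ minimizes $\dotprod{\cdot}{\xi}$ over $K$, that is, $y^* \in S(K, -\xi)$, which finishes the argument.

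The principal obstacle is more conceptual than technical: the lemma is a local rigidity statement to the effect that any translate $K + t$ with $t \in \Delta(K)^\cm$ that touches $K$ near a regular boundary point $x$ must actually be resting flush against the unique tangent hyperplane at $x$, with its contact face being a translate of $-S(K,-\xi)$. The regularity hypothesis is used precisely to eliminate ambiguity in the choice of separating hyperplane, so that the touching face is pinned down exactly; and the assumption $t \in \Delta(K)^\cm$ (rather than merely $t \in (K-K)^\cm$) is what activates the convex separation mechanism.
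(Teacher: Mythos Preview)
Your proof is correct and follows essentially the same argument as the paper: a contradiction via a compactness extraction of a limiting translation $t^*$, followed by hyperplane separation of $K$ and $K+t^*$ and identification of the separator with $H(K,\xi)$ via regularity at $x$, yielding $x-t^* \in S(K,-\xi)$. The only cosmetic difference is in the final step, where the paper invokes uniqueness of the support hyperplane with exterior normal $-\xi$ to place $x-t^*$ in $S(K,-\xi)$, whereas you compute directly that $y^*$ minimizes $\dotprod{\cdot}{\xi}$ over $K$; the two formulations are equivalent.
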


\begin{proof}
Suppose to the contrary that the assertion is not true.
Then there is a sequence $t_j \in \Delta(K)^\cm \cap U^\cm$,
and for each $j$ there is a point $x_j \in K+t_j$, such that $x_j \to x$.
Since  $t_j$ must remain bounded as $j \to \infty$, we may
assume, by passing to a subsequence if needed, that  $t_j \to t$.

Since the set $\Delta(K)^\cm$ is closed, the limit $t$ of the sequence
$t_j$ belongs to $\Delta(K)^\cm$. Hence $K$ and $K + t$ are two 
convex bodies with disjoint interiors. It follows that there exists a
hyperplane $H$ separating $K$ from $K+t$.
The point $x$ lies on both $K$ and $K+t$, and therefore $x$ must
lie on $H$. It follows that $x$ is a 
boundary point of both $K$ and $K+t$, and 
 $H$ is a support hyperplane of both $K$ and $K+t$ at the point $x$.
Since $x$ is a regular boundary point of $K$,
$H$ is the unique support hyperplane $H(K, \xi)$ of $K$ at the point $x$.

Now  $H(K, \xi)$ is a support hyperplane of $K+t$ at the point $x$,
 with exterior normal vector $-\xi$. It follows that $H(K, \xi) - t$ is a support 
hyperplane of $K$ at the point $x-t$, with 
exterior normal vector $-\xi$. Since a support hyperplane of $K$ with 
a given exterior normal vector is unique, we must have that
$H(K,\xi) - t = H(K, -\xi)$, and hence the point $x-t$ lies on the support
set $S(K, -\xi)$ determined by the vector $-\xi$. 
In other words, we have obtained that $t \in  - S(K, -\xi) + x$. 
But since the latter set is contained
in $U$, we conclude that $t \in U$. However this is not possible, since
$t$ is the limit of a sequence $\{t_j\} \subset U^\cm$ and the set $U$ is open.
 This contradiction completes the proof.
\end{proof}

\subsection{}

\begin{lem}
\label{lemE1.2}
Let $K \subset \R^d$ be a convex body  whose complement
$K^\cm$  admits a weak tiling by translates of $K$, that is,
there is a positive, locally finite measure $\mu$ such that
$\1_K \ast \mu = \1_{K^\cm}$ a.e. 
Let $x$ be a regular boundary point of $K$, and let $\xi$ be the 
exterior normal unit vector at the point $x$.
Then we have \[ \mu(- S(K,-\xi) + x ) \geq 1, \] where
$S(K, -\xi)$ is the support set
of $K$ determined by the vector $-\xi$. 
\end{lem}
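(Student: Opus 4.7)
The plan is to integrate the weak tiling identity $\1_K \ast \mu = \1_{K^\cm}$ over a shrinking ball $B_\rho(x)$ and to compare Lebesgue densities on both sides as $\rho \to 0$. Writing $S := -S(K,-\xi) + x$, which is a compact set, I aim to prove $\mu(S) \geq 1$ by showing that the value $\tfrac{1}{2}$ of the local density of $K^\cm$ at $x$ can be produced only by $\mu$-mass located on $S$, and only at a rate of at most $\tfrac{1}{2}$ per unit of mass.

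Applying Fubini's theorem to the weak tiling identity integrated over $B_\rho(x)$ gives
\[
\frac{m(B_\rho(x) \cap K^\cm)}{m(B_\rho)} \;=\; \int \frac{m((K+t) \cap B_\rho(x))}{m(B_\rho)} \, d\mu(t).
\]
As $\rho \to 0$, the left-hand side tends to $\tfrac{1}{2}$: since $x$ is a regular boundary point of $K$, the normal cone at $x$ reduces to a single ray, so the tangent cone $T_x K$ coincides with the closed halfspace bounded by $H(K,\xi)$, and hence $K^\cm$ has Lebesgue density $\tfrac{1}{2}$ at $x$. For the right-hand side, I plan to pass to the limit by dominated convergence: the integrand is bounded by $1$ and, for $\rho \leq \rho_0$, is supported in the bounded set $B_{\rho_0}(x) - K$, which has finite $\mu$-measure since $\mu$ is locally finite. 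The limit is then $\int \Theta(K+t,x)\, d\mu(t)$, where $\Theta(K+t,x)$ denotes the Lebesgue density of $K+t$ at $x$, a quantity that exists for every $t$ by convexity of $K+t$.

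It remains to exploit two pointwise facts about $\Theta(K+t,x)$ on $\supp(\mu)$. First, \corref{corA8.1} gives $\supp(\mu) \subset \Delta(K)^\cm$, so $K$ and $K+t$ have disjoint interiors for every $t \in \supp(\mu)$; combined with $x \in \bd{K}$, this forces $x \in \bd{K+t}$ whenever $x \in K+t$, and then a support hyperplane of $K+t$ at $x$ confines $K+t$ to a halfspace, giving $\Theta(K+t,x) \leq \tfrac{1}{2}$. Second, $\Theta(K+t,x) = 0$ for every $t \in \supp(\mu) \setminus S$: since $S$ is closed, one can choose an open set $U \supset S$ with $t \notin U$, and \lemref{lemE2.1} then supplies an open neighborhood $V$ of $x$ with $(K+t) \cap V = \emptyset$, so $x \notin K+t$ and hence $\Theta(K+t,x) = 0$. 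Combining these facts,
\[
\tfrac{1}{2} \;=\; \int \Theta(K+t,x)\, d\mu(t) \;=\; \int_S \Theta(K+t,x)\, d\mu(t) \;\leq\; \tfrac{1}{2}\, \mu(S),
\]
which yields $\mu(S) \geq 1$, as required.

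The most delicate ingredient is the left-hand density computation, which rests on the convex-geometric fact that the tangent cone of a convex body at a regular boundary point is a full closed halfspace (so that $K^\cm$ has density exactly $\tfrac{1}{2}$ at $x$). The reduction to $S$ via \lemref{lemE2.1} is essentially formal once one uses $\supp(\mu) \subset \Delta(K)^\cm$, and the bound $\Theta(K+t,x) \leq \tfrac{1}{2}$ for $t \in \supp(\mu)$ is an immediate consequence of the existence of a support hyperplane at any boundary point of a convex body.
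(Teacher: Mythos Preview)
Your argument is correct, but it follows a genuinely different route from the paper's proof. The paper argues by contradiction using an $L^\infty$ bound: assuming $\mu(-S(K,-\xi)+x) < 1$, it enlarges this set to an open $U$ with $\mu(U) < 1$, applies \lemref{lemE2.1} to obtain a neighborhood $V$ of $x$, splits $\mu = \mu' + \mu''$ along $U$ and $U^\cm$, and observes that $\1_K \ast \mu'' = 0$ a.e.\ on $V$ while $\|\1_K \ast \mu'\|_{L^\infty} \le \mu(U) < 1$; this forces $m(V \cap K^\cm) = 0$, contradicting the fact that $V \cap K^\cm$ has nonempty interior. Your proof instead performs a blow-up at $x$: integrating the weak tiling identity over $B_\rho(x)$, dividing by $m(B_\rho)$, and letting $\rho \to 0$, you match the Lebesgue density $\tfrac12$ of $K^\cm$ at the regular point $x$ against $\int \Theta(K+t,x)\,d\mu(t)$ via dominated convergence, then localize the nonzero contributions to $S$ using \lemref{lemE2.1} and bound each by $\tfrac12$ via a support hyperplane.

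Both proofs use \corref{corA8.1} and \lemref{lemE2.1} in the same essential way. The paper's approach is slightly more elementary in that it never needs to know the precise value of the density of $K$ at $x$; it only uses that $V \cap K^\cm$ has positive measure, which holds at any boundary point. Your approach invokes the additional convex-geometric fact that the tangent cone at a regular boundary point is a halfspace (hence density exactly $\tfrac12$), but in return it is more transparent about \emph{why} the threshold is exactly $1$: the $\tfrac12$ density of $K^\cm$ must be accounted for by translates each contributing density at most $\tfrac12$. Your dominated-convergence justification (integrands bounded by $\1_{B_{\rho_0}(x)-K}$, which has finite $\mu$-mass) and the verification that $x \in \bd(K+t)$ whenever $t \in \supp(\mu)$ and $x \in K+t$ are both sound.
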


\begin{proof}
Suppose to the contrary that  $\mu(- S(K,-\xi) + x ) < 1$.
Then there is an open neighborhood $U$ of the set
$- S(K,-\xi) + x$ such that also $\mu(U) < 1$. By \lemref{lemE2.1} 
there is an open neighborhood $V$ of $x$ such that for any
 $t \in \Delta(K)^\cm$, the set $K+t$ cannot intersect
$V$ unless $t \in U$.

We now decompose the measure
$\mu$ into a sum $\mu = \mu' + \mu''$, where
$\mu' := \mu \cdot \1_U$ and
$\mu'' := \mu \cdot \1_{U^\cm}$.
The support of the measure $\mu$ must be contained in
$\Delta(K)^\cm$ due to the weak tiling assumption
(\corref{corA8.1}), and therefore $\mu''$ is supported on
the closed set $\Delta(K)^\cm \cap U^\cm$. Hence
for any $t \in \supp(\mu'')$, the set $K+t$ does not intersect
$V$, which implies that $\1_K \ast \mu'' = 0$ a.e.\ in $V$.
It follows that
\[
\1_{V \cap K^\cm} = 
\1_{V} \cdot (\1_K \ast \mu ) = 
\1_{V} \cdot (\1_K \ast \mu' ) + \1_{V} \cdot (\1_K \ast \mu'' ) =
\1_{V} \cdot (\1_K \ast \mu' )
\quad \text{a.e.,}
\]
and therefore
\[
\| \1_{V \cap K^\cm} \|_{L^\infty(\R^d)} \leq 
\| \1_K \ast \mu'  \|_{L^\infty(\R^d)} \leq \int d\mu' = \mu(U) < 1.
\]
This implies that the set $V \cap K^\cm$ must have Lebesgue measure zero.
But this is not possible, since $V$ is an open neighborhood of
the point $x \in \bd{K}$, and hence $V \cap K^\cm$ has  nonempty interior.
 We thus arrive at a contradiction, which concludes the proof.
\end{proof}

\subsection{}
\begin{lem}
\label{lemE1.3}
Let $K$ be a convex body in $\R^d$, and 
$x, x'$ be two regular boundary points of $K$. 
Let $\xi$ and $\xi'$ be the exterior normal unit vectors
at the points $x$ and $x'$ respectively, and let
$S(K, -\xi)$ and $S(K, -\xi')$ be the support sets
of $K$ determined by $-\xi$ and $-\xi'$
respectively. If we have $\xi \neq \xi'$, then the two sets
$- S(K,-\xi) + x$ and $- S(K,-\xi') + x'$  are disjoint.
\end{lem}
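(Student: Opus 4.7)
The plan is to argue by contradiction. Suppose some $t$ lies in both $-S(K,-\xi)+x$ and $-S(K,-\xi')+x'$, so that $t = x - y = x' - y'$ for some $y \in S(K,-\xi)$ and $y' \in S(K,-\xi')$. The geometric picture is that the translate $K+t$ should touch $K$ \emph{from outside} simultaneously at $x$, with common support hyperplane $H(K,\xi)$, and at $x'$, with common support hyperplane $H(K,\xi')$. The heart of the argument will then be that any hyperplane separating $K$ from $K+t$ is forced to be a support hyperplane of $K$ at both of the regular points $x$ and $x'$, contradicting regularity as soon as $\xi \ne \xi'$.

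First I would verify that $K$ and $K+t$ have disjoint interiors and that both of them contain $x$ and $x'$. The containment $x, x' \in K \cap (K+t)$ is immediate from $x = y + t$ and $x' = y' + t$. For the disjoint interiors I would invoke the extremal definitions of the support sets: $y \in S(K,-\xi)$ means $\dotprod{y}{\xi} = \min_{k \in K}\dotprod{k}{\xi}$, while $x \in S(K,\xi)$ (since $\xi$ is the exterior normal at $x$) gives $\dotprod{x}{\xi} = \max_{k \in K}\dotprod{k}{\xi}$. A one-line computation writing any $z \in K+t$ as $w + x - y$ with $w \in K$ then yields $\dotprod{z}{\xi} \ge \dotprod{x}{\xi}$, while $K$ itself lies in the opposite closed halfspace. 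In other words, $H(K,\xi)$ itself separates $K$ from $K+t$, and in particular their interiors are disjoint.

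With this in hand, pick any hyperplane $H$ separating $K$ from $K+t$, which exists by the basic separation theorem for convex bodies. Any common point of $K$ and $K+t$ must lie in $H$, so both $x$ and $x'$ belong to $H$. Hence $H$ is a support hyperplane of $K$ at each of these regular points, and by the uniqueness of the support hyperplane at a regular boundary point, $H$ must equal both $H(K,\xi)$ and $H(K,\xi')$. But a hyperplane, together with the side on which $K$ lies, determines the exterior normal unit vector uniquely, so $\xi = \xi'$, contradicting the hypothesis. The one place requiring a small amount of care is the extremality computation showing that $K$ and $K+t$ lie on opposite sides of $H(K,\xi)$; once that routine step is in place, the uniqueness of support hyperplanes at regular points closes the argument immediately.
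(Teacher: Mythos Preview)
Your proof is correct and rests on the same core observation as the paper's: the hyperplane $H(K,\xi)$ separates $K$ from $K+t$, and the regularity of $x'$ then yields the contradiction. The paper packages this as a direct inner-product computation (from $x'\notin H(K,\xi)$ it deduces $\langle y',\xi\rangle < \langle y,\xi\rangle$, contradicting $y'\in K$), whereas you phrase it geometrically via $x'\in K\cap(K+t)\subset H(K,\xi)$; your appeal to an \emph{arbitrary} separating hyperplane is harmless but redundant, since you have already exhibited $H(K,\xi)$ explicitly.
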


\begin{proof}
Suppose to the contrary that the two sets
$- S(K,-\xi) + x$ and $- S(K,-\xi') + x'$  are not disjoint, so they
have at least one point in common.
Then there exist two points $y \in S(K,-\xi)$ and $y' \in S(K,-\xi')$
such that $x - y = x' - y'$.

The support hyperplanes of $K$ with exterior normal vectors $\xi$ and $-\xi$
are respectively given by
\begin{equation}
\label{eq:E1.3.1}
H(K,\xi) = \{z  : \dotprod{z}{\xi} = \dotprod{x}{\xi} \}
\quad \text{and} \quad
H(K,-\xi) = \{z  : \dotprod{z}{\xi} = \dotprod{y}{\xi} \}.
\end{equation}
Since $K$ is contained in the closed slab bounded
by these two hyperplanes, we have
\begin{equation}
\label{eq:E1.3.2}
K \subset \{z  :  \dotprod{y}{\xi} \leq  \dotprod{z}{\xi} \leq \dotprod{x}{\xi} \}.
\end{equation}
The point $x'$ is  a regular boundary point of $K$, and so $H(K, \xi')$ 
is the unique support hyperplane of $K$ at the point
$x'$. Since $\xi \neq \xi'$, it follows that $x'$
does not lie on $H(K,\xi)$. Using \eqref{eq:E1.3.1} and \eqref{eq:E1.3.2} 
this implies that $\dotprod{x'}{\xi} < \dotprod{x}{\xi}$.

Now denote $h := x - y = x' - y'$, then it follows that
\[
\dotprod{y'}{\xi} = \dotprod{x' - h}{\xi} <
\dotprod{x - h}{\xi} = \dotprod{y}{\xi}.
\]
But since $y' \in K$ this contradicts \eqref{eq:E1.3.2},
and thus the lemma is proved.
\end{proof}

\subsection{}
\begin{proof}[Proof of \thmref{thmE5}]
Assume that $K \subset \R^d$ is a convex body   whose
complement $K^\cm$ has a weak tiling by translates of $K$.
We will prove that $K$ is necessarily a polytope.

Suppose to the contrary that  $K$ is not a polytope. Then
by \lemref{lemE1.1} there exists an infinite sequence $x_n$ of 
regular boundary points of $K$, such that the corresponding
exterior normal unit vectors $\xi_n$ are mutually distinct.

The set $K^\cm$  has a weak tiling by translates of $K$,   so
there is a positive, locally finite measure $\mu$ such that
$\1_K \ast \mu = \1_{K^\cm}$ a.e.
Let $S(K, -\xi_n)$ be the support set
of $K$ determined by the vector $-\xi_n$, then by \lemref{lemE1.2} 
we have 
$\mu(- S(K, -\xi_n) + x_n) \geq 1$ for each $n$.

Since the exterior normal unit vectors $\xi_n$ are distinct,
we get from \lemref{lemE1.3} that the sets $- S(K, -\xi_n) + x_n$
are pairwise disjoint. On the other hand, all these sets
are contained in  $K-K$.
Hence the set $K-K$ contains an infinite sequence 
of pairwise disjoint subsets, such that the total mass of $\mu$
in each one of these sets is at least $1$.
It follows that we must have $\mu(K-K) = +\infty$. But  this is not possible, as
$K-K$ is a bounded set and $\mu$ is a locally finite measure. 
We thus  arrive at a contradiction, which concludes the proof
of the theorem.
\end{proof}


\section{Spectral convex polytopes can tile by translations, I}
\label{sect:belti}

So far, we have established that a spectral
convex body $\Om \subset \R^d$ must be  a convex polytope
(\thmref{thmA20}). We also know from the result in
\cite{Kol00} (or \cite{KP02}) that $\Om$ must be centrally
symmetric, and from the result in \cite[Section 4]{GL17} 
that all the facets of $\Om$ must be centrally symmetric as well. 
It now remains to prove that each belt of $\Om$ must consist of 
either $4$ or $6$ facets (\thmref{thmA21}). 
The proof will be given throughout the present
section and the next one.

The key results of the present section are 
\lemref{lemJ2.7} and \lemref{lemJ2.10}.

\subsection{}
We begin by recalling some basic facts about convex polytopes in $\R^d$.
For more details, we refer  to \cite[Section 1.A]{BG09},
\cite[Section 14.1]{Gru07} and \cite[Section 2.4]{Sch14}.

A \emph{convex polytope}  $A \subset \R^d$ is the convex hull of a finite number
of points. Equivalently,  a convex polytope is a bounded set $A$ which
can be represented as the intersection
of finitely many closed halfspaces. 

We denote by $\aff(A)$ the \emph{affine hull} of 
a convex polytope $A \subset \R^d$,  that is, $\aff(A)$ is the smallest
affine subspace containing $A$. By the \emph{relative interior} and 
\emph{relative boundary}  of $A$ we refer respectively to the
interior and boundary relative to $\aff(A)$.  The relative interior
of $A$ will be denoted by $\relint(A)$.

A \emph{face} of  a convex polytope $A$ is  
a support set of $A$, that is,  the intersection of $A$ with 
a support hyperplane  of $A$. (There is also an alternative, equivalent definition,
according to which a face of $A$ is an \emph{extreme subset}
of $A$, that is, a convex subset $F \subset A$ such that if $x,y \in A$, 
$(1-\lam)x + \lam y \in F$, $0 < \lam < 1$, then $x,y \in F$.)

If $A \subset \R^d$ is a convex polytope with nonempty interior, then
a $(d-1)$-dimensional face of $A$ is called a \emph{facet} of $A$,
while a $(d-2)$-dimensional face is called a \emph{subfacet} of $A$.
If $G$ is a subfacet of $A$, then there exist exactly two facets $F_1$
and $F_2$ of $A$ which contain $G$, and we have $G = F_1 \cap F_2$
(see \cite[Proposition 1.12]{BG09}).

A convex polytope $A \subset \R^d$ is said to be
 \emph{centrally symmetric} if the set $-A$ is a translate
of $A$. In this case, there is a unique point $x \in \R^d$ such that 
$- A + x = A - x$. The point $x$ is called the \emph{center of symmetry} of $A$, 
and $A$ is then said to be symmetric with respect to $x$.

If $X, Y$ are two convex subsets of $\R^d$, then we use
  $\convex \{X, Y\}$ to denote the convex hull of the union $X \cup Y$.

A \emph{prism} in $\R^d$ is a convex polytope of the form
$\convex \{F, F + \tau\}$, where $F$ is a  $(d-1)$-dimensional 
convex polytope, and $\tau$ is a vector such that 
 $F$ and $F+\tau$ do not lie on the same hyperplane.
The two sets 
 $F$ and $F+\tau$ are called the \emph{bases} of the prism.

A \emph{slab} in $\R^d$ is  the closed region between 
two parallel hyperplanes, that is, a set of the form
$ \{z : c_1 \leq \dotprod{z}{\xi} \leq c_2\}$, where
$\xi$ is a nonzero vector and $c_1, c_2$ are constants.

\subsection{}

\begin{lem}
\label{lemJ2.11}
Let $A \subset \R^d$ be a convex polytope with nonempty interior. Then
\[
\interior(A) - \interior(A) = \interior(A) - A = \interior(A-A).
\]
\end{lem}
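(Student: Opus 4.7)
My plan is to prove the claim as a cycle of three inclusions, namely
\[
\interior(A) - \interior(A) \;\subseteq\; \interior(A) - A \;\subseteq\; \interior(A-A) \;\subseteq\; \interior(A) - \interior(A),
\]
which forces all three sets to coincide. The first inclusion is immediate from $\interior(A) \subseteq A$. The second uses the observation that $\interior(A) - A = \bigcup_{a \in A}\bigl(\interior(A) - a\bigr)$ is a union of translates of the open set $\interior(A)$, hence itself open; being contained in $A - A$, it must therefore lie in $\interior(A - A)$.

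The substantive step is the reverse inclusion $\interior(A-A) \subseteq \interior(A) - \interior(A)$. Given $t \in \interior(A-A)$, I would first exploit the openness by choosing $\eta > 0$ small enough that the dilate $(1+\eta)t$ still lies in $A - A$, and writing $(1+\eta)t = a - b$ with $a, b \in A$. Then, fixing any point $u_0 \in \interior(A)$, I would form the convex combinations
\[
u := \tfrac{1}{1+\eta}\, a + \tfrac{\eta}{1+\eta}\, u_0, \qquad v := \tfrac{1}{1+\eta}\, b + \tfrac{\eta}{1+\eta}\, u_0.
\]
Since $u_0 \in \interior(A)$ enters each combination with positive weight, the standard convexity fact that a segment from a boundary point to an interior point lies in the interior (except possibly at the boundary endpoint) gives $u, v \in \interior(A)$. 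A direct computation yields $u - v = (a - b)/(1+\eta) = t$, so $t \in \interior(A) - \interior(A)$ as required.

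The only conceptual obstacle is spotting the dilation trick: a naive averaging applied to $t$ itself would only show $\mu t \in \interior(A) - \interior(A)$ for $\mu \in [0,1)$, and miss $t$ itself. Enlarging $t$ slightly first, using crucially that $t$ is \emph{interior} to $A - A$, repairs this gap. I also note that the argument does not really use that $A$ is a polytope---only that $A$ is a convex body with nonempty interior---so the lemma in fact holds in that broader generality.
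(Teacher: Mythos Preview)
Your proof is correct and follows essentially the same route as the paper: the same cycle of three inclusions, with the key step (showing $\interior(A-A)\subseteq\interior(A)-\interior(A)$) carried out via the identical dilation trick of passing to $(1+\eta)t\in A-A$ and then pulling back toward a fixed interior point. Your remark that the polytope hypothesis is unnecessary is also accurate.
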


\begin{proof}
The fact that $\interior(A)$ is a subset of $A$ implies that
$\interior(A) - \interior(A) \subset \interior(A) - A$.
In turn, $\interior(A) - A$ is the union of all  sets
of the form $\interior(A) - x$ where $x \in A$, and
all these sets are open.
Hence $\interior(A) - A$ is an open subset of $A-A$, and it follows
that $\interior(A) - A \subset \interior(A-A)$. It remains to prove
that $\interior(A-A) \subset \interior(A) - \interior(A)$. Indeed,
 let $h \in \interior(A-A)$.
Then there is $\eps  > 0$ such that $h + \eps h$ is in $A-A$.
Let $x, y \in A$ be such that $h + \eps h = y - x$. Let $a$
be an interior point of $A$, and define the points
$x' := \lambda x + (1-\lambda)a$ and
$y' := \lambda y + (1-\lambda)a$,
where $\lam := (1+\eps)^{-1}$. Then $x', y' \in \interior(A)$ and we have
$y'-x' = h$, which shows that $h \in \interior(A) - \interior(A)$.
\end{proof}

\begin{lem}
\label{lemJ2.14}
Let $A \subset \R^d$ be a convex polytope with nonempty
interior, and let $F$ be a facet of $A$. Then
\[
\relint(F-F) \subset \interior(A-A).
\]
\end{lem}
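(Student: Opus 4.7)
The approach is to combine \lemref{lemJ2.11} with a one-sided neighborhood argument at the facet $F$. First, I would observe that since $F$ is itself a convex polytope of full dimension inside its affine hull $\aff(F)$, applying the same argument as in \lemref{lemJ2.11} to $F$ within $\aff(F)$ yields $\relint(F-F) = \relint(F) - \relint(F)$. Hence any $h \in \relint(F-F)$ can be written as $h = y - x$ with $x, y \in \relint(F)$.

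Next, let $\xi$ be the outward unit normal of $A$ at $F$, so that $F$ lies on the supporting hyperplane $H(A,\xi) = \{z : \dotprod{z}{\xi} = c\}$ and $A$ is contained in the closed halfspace $H^- = \{z : \dotprod{z}{\xi} \leq c\}$. The core geometric step is to show that for every $y \in \relint(F)$ and all sufficiently small $\eps > 0$, the point $y - \eps\xi$ lies in $\interior(A)$. To verify this I would fix an interior point $p$ of $A$ together with a small $(d-1)$-dimensional ball $B' \subset \relint(F)$ centered at $y$; the solid cone $\convex(\{p\} \cup B')$ is then contained in $A$ and, by a direct examination of its slices parallel to $H(A,\xi)$, contains some full euclidean ball around $y$ intersected with $H^-$. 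Since the intersection of this ball with the open halfspace $\interior(H^-)$ is an open set contained in $A$, it is contained in $\interior(A)$, and in particular it contains $y - \eps\xi$ for all small $\eps > 0$. The same argument applies to $x$.

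With this in hand, the conclusion is immediate: choosing $\eps > 0$ small enough that both $x - \eps\xi$ and $y - \eps\xi$ belong to $\interior(A)$, we obtain
\[
h = y - x = (y - \eps\xi) - (x - \eps\xi) \in \interior(A) - \interior(A) = \interior(A-A),
\]
where the last equality is \lemref{lemJ2.11}. The only nontrivial ingredient is the one-sided neighborhood statement for regular boundary points, which is where I expect to spend the bulk of the verification; once it is in place, the inclusion $\relint(F-F) \subset \interior(A-A)$ follows with no further work.
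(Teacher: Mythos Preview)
Your proof is correct, but the paper takes a shorter and somewhat different route. Instead of first writing $h = y - x$ with $x,y \in \relint(F)$ and then pushing each point inward along the normal $\xi$, the paper reuses verbatim the dilation trick from the proof of \lemref{lemJ2.11}: since $h \in \relint(F-F)$, one has $(1+\eps)h \in F-F$ for small $\eps>0$, so $(1+\eps)h = y-x$ with $x,y \in F$ (not necessarily in $\relint(F)$). Fixing any interior point $a \in \interior(A)$ and setting $x' = \lambda x + (1-\lambda)a$, $y' = \lambda y + (1-\lambda)a$ with $\lambda = (1+\eps)^{-1}$, one gets $x',y' \in \interior(A)$ automatically (a strict convex combination of a point of $A$ with an interior point is always interior) and $y'-x' = h$. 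Thus the paper's argument is essentially a one-line replay of \lemref{lemJ2.11} and never needs the one-sided neighborhood lemma or the cone construction that you spend most of your effort on. Your approach trades this algebraic shortcut for a more explicit geometric picture; it works, but the verification of the half-ball inclusion is the bulk of the labor, whereas the paper sidesteps it entirely.
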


\begin{proof}
Let $h \in \relint(F-F)$.
Then there is an open set $U$  such that
$h \in U \cap H \subset F-F$, where  $H$ denotes the hyperplane through 
the origin parallel to $F$.
It follows that if we choose $\eps > 0$ small
enough then $h + \eps h$ is in $F-F$.
Let $x, y \in F$ be such that $h + \eps h = y - x$. Let $a$
be an interior point of $A$, and define the points
$x' = \lambda x + (1-\lambda)a$ and
$y' = \lambda y + (1-\lambda)a$,
where $\lam = (1+\eps)^{-1}$. Then $x', y' \in \interior(A)$ and we have
$y'-x' = h$, which shows that $h \in \interior(A) - \interior(A)$.
Using \lemref{lemJ2.11} we conclude that
$h \in \interior(A-A)$ and the lemma is proved.
\end{proof}

\begin{lem}
\label{lemJ2.5}
Let $A \subset \R^d$ be a convex polytope with nonempty
interior. Let $F$ be a facet of $A$, and let $H^-$
be the support halfspace of $A$ at  the facet $F$. Suppose
that $E$ is a compact subset of $\relint(F)$. Then
there is an open neighborhood $V$ of $E$ such that the set
$V \cap H^{-}$ is contained in $A$.
\end{lem}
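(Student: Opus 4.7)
The plan is to exploit the representation of the convex polytope $A$ as a finite intersection of closed halfspaces and to verify that on $\relint(F)$ the only ``active'' constraint is the one defining $F$.

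First I would write $A = H^{-} \cap \bigcap_{i=1}^{N} H_i^{-}$, where $H$ is the support hyperplane at the facet $F$ and the $H_i^{-}$ are the remaining halfspaces in a minimal representation of $A$ (so each $H_i$ supports a facet of $A$ distinct from $F$). Writing $H_i^{-} = \{z : \dotprod{z}{\xi_i} \leq c_i\}$, it suffices to show that for every $i \geq 1$, each point $x \in \relint(F)$ strictly satisfies $\dotprod{x}{\xi_i} < c_i$.

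Next I would verify this strict inequality. If some $x \in \relint(F)$ satisfied $\dotprod{x}{\xi_i} = c_i$, then the face $F \cap H_i$ of $F$ would contain a relative interior point of $F$; by a standard fact about faces of convex sets, this forces $F \cap H_i = F$, that is, $F \subset H_i$. Combined with $F \subset H$ and $H \neq H_i$, this would confine $F$ to the affine flat $H \cap H_i$ of dimension at most $d-2$, contradicting the fact that $F$ is a facet (dimension $d-1$). Hence the strict inequality holds at every point of $\relint(F)$.

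Now, since $E$ is a compact subset of $\relint(F)$ and each function $x \mapsto c_i - \dotprod{x}{\xi_i}$ is continuous and strictly positive on $\relint(F)$, there exists $\delta > 0$ such that $\dotprod{x}{\xi_i} \leq c_i - \delta$ for every $x \in E$ and every $1 \leq i \leq N$. Taking $V$ to be the open $\eps$-neighborhood of $E$ in $\R^d$ with $\eps$ small enough (say $\eps \cdot \max_i |\xi_i| < \delta$), every $y \in V$ still satisfies $\dotprod{y}{\xi_i} < c_i$ for all $i \geq 1$. Consequently any $y \in V \cap H^{-}$ lies in $H^{-} \cap \bigcap_{i=1}^{N} H_i^{-} = A$, as required. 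The main (and quite mild) obstacle is the face-theoretic step showing that in the relative interior of $F$ all other halfspace constraints are strictly inactive; everything else is a routine compactness-and-continuity argument.
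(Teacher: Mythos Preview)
Your proof is correct and follows essentially the same approach as the paper: represent $A$ as a finite intersection of closed halfspaces, observe that on $\relint(F)$ every constraint other than the one defining $F$ is strictly inactive, and then use compactness of $E$ to obtain the neighborhood $V$. Your write-up simply makes the face-theoretic justification and the compactness step more explicit than the paper does.
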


\begin{proof}
Let $F_i$, $i=1,2,\dots,m$, be all the facets of $A$, and
suppose that $F = F_1$. For each $i$, let $H_i$ be the 
hyperplane containing the facet $F_i$, and let $H^{-}_i$ 
be the support halfspace of $A$ at the facet $F_i$. Then
\begin{equation}
\label{eq:J2.5.1}
A= H^{-}_1 \cap H^{-}_2 \cap \dots \cap H^{-}_m
\end{equation}
(see \cite[Theorem 14.2]{Gru07}). In particular, the set $E$ is contained
in all the halfspaces $H^{-}_i$. On the other hand no point of $E$ can
lie on any one of the hyperplanes $H_i$, $i \neq 1$, since $E$ is a
subset of $\relint(F)$. Hence there is 
an open neighborhood $V$ of $E$ such that 
$V$ is contained in $H^{-}_i$ for all $i \neq 1$.
Using this together with \eqref{eq:J2.5.1} implies the claim.
\end{proof}

\begin{lem}
\label{lemJ2.6}
Let $A \subset \R^d$ be a convex polytope with nonempty
interior, and let $G$ be a subfacet of $A$. Let
$F_1$ and $F_2$ be the two adjacent facets of $A$ that meet at
the subfacet $G$, and let $H^{-}_1$ and $H^{-}_2$ be the
support halfspaces of $A$ at the facets $F_1$ and $F_2$
respectively. Suppose that  $E$ is a compact subset of $\relint(G)$. Then
there is an open neighborhood $V$ of $E$ such that the set
$V \cap H^{-}_1 \cap H^{-}_2$ is contained in $A$.
\end{lem}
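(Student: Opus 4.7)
The plan is to adapt the proof of \lemref{lemJ2.5} to the subfacet case. Let $F_1, F_2, \dots, F_m$ be all the facets of $A$ (with $F_1, F_2$ as in the statement), let $H_i$ be the hyperplane containing $F_i$, and let $H^{-}_i$ be the corresponding support halfspace, so that $A = H^{-}_1 \cap H^{-}_2 \cap \dots \cap H^{-}_m$. Since $E \subset \relint(G) \subset A$, the set $E$ is contained in every $H^{-}_i$. The whole task is to find a neighborhood $V$ of $E$ that sits inside $H^{-}_i$ for every index $i \notin \{1,2\}$; once this is achieved, $V \cap H^{-}_1 \cap H^{-}_2$ is automatically contained in $A$.

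The key step is to verify that no point of $\relint(G)$ lies on $H_i$ for $i \neq 1, 2$. I would argue by contradiction: if $x \in \relint(G) \cap H_i$, then since $H^{-}_i$ supports $A$ at $F_i$, we have $A \cap H_i = F_i$, so $x \in F_i$. Then $F_i \cap G$ is a face of the polytope $G$ that meets its relative interior, which forces $F_i \cap G = G$, i.e., $G \subset F_i$. But the facts from \cite[Proposition 1.12]{BG09} recalled above tell us that the subfacet $G$ is contained in exactly two facets of $A$, namely $F_1$ and $F_2$. This contradicts $i \notin \{1,2\}$, proving the claim.

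With the claim in hand, the rest is a compactness argument. For each fixed $i \neq 1, 2$, the set $H_i$ is closed and disjoint from the compact set $E$, so $E \subset \interior(H^{-}_i)$ (because $E \subset A \subset H^{-}_i$ and $E \cap H_i = \varnothing$). By the standard tube argument, there is an open neighborhood $V_i$ of $E$ with $V_i \subset \interior(H^{-}_i)$. Taking $V := \bigcap_{i \neq 1, 2} V_i$, which is still an open neighborhood of $E$ as the intersection of finitely many such, we obtain $V \subset H^{-}_i$ for every $i \notin \{1,2\}$, hence
\[
V \cap H^{-}_1 \cap H^{-}_2 \subset \bigcap_{i=1}^{m} H^{-}_i = A,
\]
which is the desired conclusion.

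There is no real obstacle here; the only non-routine ingredient is the face-structural fact that the relative interior of a subfacet avoids every supporting hyperplane other than the two associated to $F_1$ and $F_2$, and this is immediate from the uniqueness of the pair of facets containing a given subfacet.
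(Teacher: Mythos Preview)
Your proof is correct and follows essentially the same approach as the paper's: both write $A = \bigcap_i H_i^-$, observe that $E \subset \relint(G)$ misses every $H_i$ with $i \neq 1,2$, and use compactness to produce the neighborhood $V$. You supply more detail than the paper does---in particular, you justify via the face structure why $\relint(G) \cap H_i = \varnothing$ for $i \neq 1,2$, whereas the paper simply asserts this---but the argument is the same.
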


\begin{proof}
We continue to use the same notations as in the proof of Lemma \ref{lemJ2.5}. The set $E$ is again contained
in all the halfspaces $H^{-}_i$. However no point of $E$ can
lie on any one of the hyperplanes $H_i$, $i \neq 1,2$, since 
$E$ is a subset of $\relint(G)$. Hence there is 
an open neighborhood $V$ of $E$ such that 
$V$ is contained in $H^{-}_i$ for all $i \neq 1,2$.
Using this with \eqref{eq:J2.5.1} we obtain the
assertion of the lemma.
\end{proof}

\subsection{}

\begin{lem}
\label{lemJ2.15}
Let $A \subset \R^d$ be a convex polytope with nonempty
interior, and let $F$ be a facet of $A$. Suppose that
$\xi$ is the exterior normal unit vector
of $A$ at the facet $F$, and that
 $H^- = \{z : \dotprod{z}{\xi} \leq c\}$ is the support
halfspace of $A$ at the facet $F$. For each $\delta > 0$ we let
$P_\delta = A \cap S_\delta$
be the intersection of $A$ with the slab
$S_\delta = \{z : c - \delta \leq \dotprod{z}{\xi} \leq c\}$, and
 we let $Q_\delta = \convex\{F, F - \delta \xi\}$ 
be the prism with bases $F$ and $F - \delta \xi$. Then we have
$m(P_\delta \triangle Q_\delta) = o ( \delta )$ as $\delta \to 0$
(see Figure \ref{fig:img0924}).
\end{lem}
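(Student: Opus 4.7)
The plan is to compute the three quantities $m(P_\delta)$, $m(Q_\delta)$ and $m(P_\delta \cap Q_\delta)$ to first order in $\delta$, and then invoke the identity
\[
m(P_\delta \triangle Q_\delta) \;=\; m(P_\delta) + m(Q_\delta) - 2 \, m(P_\delta \cap Q_\delta),
\]
each term of which is $\delta \, m_{d-1}(F) + o(\delta)$, so that the right-hand side collapses to $o(\delta)$.

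The value of $m(Q_\delta)$ is immediate: since $\xi$ is a unit vector normal to $\aff(F)$, the set $Q_\delta = \convex\{F, F - \delta \xi\}$ is a right prism with base of $(d-1)$-dimensional measure $m_{d-1}(F)$ and height $\delta$, so $m(Q_\delta) = \delta \, m_{d-1}(F)$ exactly. For $m(P_\delta)$ I would use Cavalieri's principle: writing $A_s := A \cap \{z : \dotprod{z}{\xi} = s\}$,
\[
m(P_\delta) \;=\; \int_{c - \delta}^{c} m_{d-1}(A_s) \, ds.
\]
Since $A$ is a convex polytope, the cross-sections $A_s$ depend continuously (in fact piecewise-affinely) on $s$ in the Hausdorff metric, and as $s \to c^-$ they converge to $F$; hence $m_{d-1}(A_s) \to m_{d-1}(F)$, and therefore $m(P_\delta) = \delta \, m_{d-1}(F) + o(\delta)$.

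For the intersection, observe first that $Q_\delta \subset S_\delta$, so $P_\delta \cap Q_\delta = A \cap Q_\delta$. To bound this from below, fix $\eps > 0$ and choose a compact set $E \subset \relint(F)$ with $m_{d-1}(F \setminus E) < \eps$. By \lemref{lemJ2.5} there is an open neighborhood $V$ of $E$ such that $V \cap H^- \subset A$. For all sufficiently small $\delta$, the truncated prism $\convex\{E, E - \delta \xi\}$ lies in $V \cap H^-$ (since it shrinks to $E$ as $\delta \to 0$), and it is also a subset of $Q_\delta$; hence it lies in $P_\delta \cap Q_\delta$ and contributes at least $\delta \, m_{d-1}(E) \geq \delta(m_{d-1}(F) - \eps)$ to its measure. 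Combined with the trivial upper bound $m(P_\delta \cap Q_\delta) \leq m(P_\delta) = \delta \, m_{d-1}(F) + o(\delta)$, letting $\eps \to 0$ yields $m(P_\delta \cap Q_\delta) = \delta \, m_{d-1}(F) + o(\delta)$, and the proof concludes by plugging into the identity above.

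The argument is just a first-order volume comparison, so there is no deep obstacle. The only mildly delicate point is that neither inclusion $Q_\delta \subset P_\delta$ nor $P_\delta \subset Q_\delta$ holds in general: adjacent facets of $A$ may shave corners off the prism $Q_\delta$ near the subfacets of $F$, and conversely $A$ may widen below $F$ so that $P_\delta$ sticks out past $Q_\delta$. One therefore cannot get away with a one-sided inclusion; instead, \lemref{lemJ2.5} handles the shortfall of $Q_\delta$ inside $A$ near $\relint(F)$, while cross-section continuity handles the overhang of $P_\delta$ outside $Q_\delta$, and both deviations are $o(\delta)$ because they live near $\bd(F)$, which has zero $(d-1)$-dimensional measure.
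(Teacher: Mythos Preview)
Your argument is correct and complete (modulo one cosmetic slip noted below), but it is organised differently from the paper's proof. The paper does not compute $m(P_\delta)$, $m(Q_\delta)$, $m(P_\delta\cap Q_\delta)$ separately and combine them via inclusion--exclusion; instead it sandwiches $P_\delta$ directly between an inner and an outer prism. Writing $I_\delta = \{s\xi : 0\le s\le \delta\}$ and letting $F_{-\eps}$, $F_{+\eps}$ be the $\eps$-erosion and $\eps$-dilation of $F$ inside $\aff(F)$, the paper shows $F_{-\eps} - I_\delta \subset P_\delta \subset F_{+\eps} - I_\delta$ for small $\delta$ (the left inclusion via \lemref{lemJ2.5}), and since $Q_\delta = F - I_\delta$ sits between the same two prisms, one gets $P_\delta \triangle Q_\delta \subset (F_{+\eps}\setminus F_{-\eps}) - I_\delta$, hence $m(P_\delta\triangle Q_\delta) \le \delta\, m_{d-1}(F_{+\eps}\setminus F_{-\eps})$, which is $o(\delta)$ as $\eps\to 0$. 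Your route trades this single geometric containment for three first-order volume estimates, importing Cavalieri and continuity of cross-sectional volume; the paper's route avoids those in exchange for having to justify the outer containment $P_\delta \subset F_{+\eps} - I_\delta$. Both are short and both ultimately rest on \lemref{lemJ2.5} together with the fact that the relative boundary of $F$ has zero $(d-1)$-measure.

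One small imprecision: if $E$ is merely compact, $\convex\{E, E-\delta\xi\}$ shrinks to $\convex(E)$, not to $E$, and need not lie in the neighbourhood $V$ furnished by \lemref{lemJ2.5} applied to $E$. Either take $E$ convex from the outset (e.g.\ $E = F_{-\eps}$), or replace the ``truncated prism'' by the cylinder $E - I_\delta = \{e - s\xi : e\in E,\ 0\le s\le\delta\}$, which does lie in $V\cap H^{-}$ for small $\delta$ and has measure $\delta\, m_{d-1}(E)$.
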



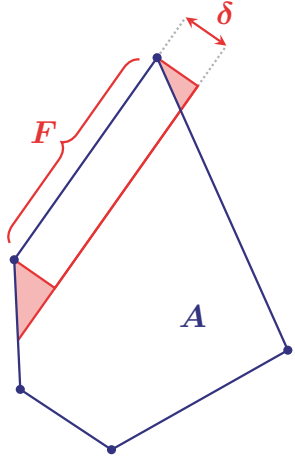
\begin{figure}[ht]
\centering

\begin{tikzpicture}[scale=0.04, style=mystyle]

\def\ax{95};
\def\ay{102};
\def\bx{138};
\def\by{5};
\def\cx{80};
\def\cy{-28}
\def\dx{50};
\def\dy{-8};
\def\ex{48};
\def\ey{35};
\def\aax{{\ax+0.2*(\ay-\ey)}};
\def\aay{{\ay+0.2*(\ex-\ax)}};
\def\eex{{\ex+0.2*(\ay-\ey)}};
\def\eey{{\ey+0.2*(\ex-\ax)}};

\def\hx{1.2 * \ax - 0.2 * \ex};
\def\hy{1.2 * \ay - 0.2 * \ey};
\def\jx{{(\hx)+0.2*(\ay-\ey)}};
\def\jy{{(\hy)+0.2*(\ex-\ax)}};

\coordinate (A) at (\ax,\ay);
\coordinate (B) at (\bx,\by);
\coordinate (C) at (\cx,\cy);
\coordinate (D) at (\dx,\dy);
\coordinate (E) at (\ex,\ey);
\coordinate (AA) at (\aax,\aay);
\coordinate (EE) at (\eex,\eey);
\coordinate (H) at (\hx,\hy);
\coordinate (J) at (\jx,\jy);

\draw [gray, opacity=0.6, densely dotted] (A) -- (H);
\draw [gray, opacity=0.6, densely dotted] (AA) -- (J);

\fill [myred, opacity=0.35]
	(A) -- (AA) -- (intersection of AA--EE and A--B) -- (A);

\fill [myred, opacity=0.35]
	(E) -- (EE) -- (intersection of AA--EE and D--E) -- (E);

\draw [myred] (A) -- (AA) -- (EE) -- (E);
\draw [myred] (intersection of AA--EE and D--E) -- (AA) ;

\draw (A) -- (B) -- (C) -- (D) -- (E) -- (A);

\fill (A) circle (\mycirc);
\fill (B) circle (\mycirc);
\fill (C) circle (\mycirc);
\fill (D) circle (\mycirc);
\fill (E) circle (\mycirc);

\draw (B) 
	node[xshift=-1.25cm,yshift=0.45cm] 
	{$\boldsymbol{A}$};

\draw [decorate,decoration={brace,raise=5pt,amplitude=6pt,
	pre=moveto, pre length=0.15cm,
	post=moveto, post length=0.15cm}, myred]
	(E) -- (A) 
	node [myred,midway,xshift=-0.55cm,yshift=0.35cm] 
	{$\boldsymbol{F}$};

\draw [<->, >=stealth, myred] (H) -- (J)
	node [myred,midway,xshift=0.25cm,yshift=0.25cm] 
	{$\boldsymbol{\delta}$};

\end{tikzpicture}

\caption{The shaded region in the illustration represents the symmetric
	difference $P_\delta \triangle Q_\delta$ 
	of the sets $P_\delta$ and $Q_\delta$ in \lemref{lemJ2.15}.}
\label{fig:img0924}
\end{figure}


\begin{proof}
Let $H$ be the hyperplane containing the facet $F$.
For each $\eps > 0$, let $F_{+\eps}$ be the set of all
points of $H$ whose distance from $F$ is at most $\eps$,
and $F_{-\eps}$ be the set of all
points of $F$ whose distance from the relative boundary of $F$ is at least $\eps$.

Let $I_\delta = \{s  \xi : 0 \leq s \leq \delta\}$
 be the closed line segment connecting the origin
to the point $\delta \xi$. Then there is  $\delta_0 = \delta_0(A,F,\eps) > 0$
such that for every $\delta < \delta_0$ we have
\[
F_{-\eps} - I_\delta \subset P_\delta \subset F_{+\eps}  - I_\delta.
\]
(the left inclusion may be deduced from \lemref{lemJ2.5}).
Observe that  $Q_\delta = F - I_\delta$, hence
\[
P_\delta \triangle Q_\delta \subset
(F_{+\eps} \setminus F_{-\eps}) - I_\delta.
\]
This implies that
\[
m_d (P_\delta \triangle Q_\delta) \leq \delta \, m_{d-1}
 (F_{+\eps} \setminus F_{-\eps}).
\]
Since $m_{d-1}
 (F_{+\eps} \setminus F_{-\eps})$ tends to zero as $\eps \to 0$,
the assertion follows.
\end{proof}

\subsection{}

\begin{lem}
\label{lemJ2.13}
Let $A \subset \R^d$ be a convex polytope with nonempty
interior, and let $G$ be a subfacet of $A$. Let
$F_1$ and $F_2$ be the two adjacent facets of $A$ that meet at
the subfacet $G$, and let $H^{-}_1$ and $H^{-}_2$ be the
support halfspaces of $A$ at the facets $F_1$ and $F_2$
respectively. For each $\delta > 0$ we let 
$P_\delta$ be the set of all points of $A$ whose distance from
$\aff(G)$ is not greater than $\delta$, and we let
$Q_\delta =  (G + S_\delta) \cap H^{-}_1 \cap H^{-}_2$,
where $S_\delta$ is a closed $2$-dimensional ball of radius $\delta$  centered at the
origin and orthogonal to $\aff(G)$. Then we have
$m(P_\delta \triangle Q_\delta) = o ( \delta^2 )$ as $\delta \to 0$.
\end{lem}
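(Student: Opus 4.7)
The plan is to adapt the proof of \lemref{lemJ2.15}, now in codimension two. Let $L$ be the two-dimensional linear subspace through the origin orthogonal to $\aff(G)$, so that $S_\delta \subset L$ and $\R^d = \aff(G) \oplus L$ is an orthogonal direct sum. Since $\aff(G) \subset H_1 \cap H_2$, the exterior unit normals $\xi_1,\xi_2$ to the facets $F_1,F_2$ lie in $L$, and a point $g+s$ with $g \in \aff(G)$, $s \in L$ lies in $H_i^-$ if and only if $\dotprod{s}{\xi_i} \leq 0$. Setting $W := \{s \in L : \dotprod{s}{\xi_1}\leq 0,\ \dotprod{s}{\xi_2}\leq 0\}$, a planar wedge, and $D_\delta := S_\delta \cap W$, uniqueness of the decomposition gives $Q_\delta = G + D_\delta$; the fiber of $Q_\delta$ over any $g \in \aff(G)$ equals $D_\delta$ if $g \in G$ and is empty otherwise, and by Fubini $m_d(Q_\delta) = m_{d-2}(G)\, m_2(D_\delta)$.

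Next, for each $\eps > 0$ let $G_{-\eps}$ denote the set of points of $G$ at distance at least $\eps$ from the relative boundary of $G$, and $G_{+\eps}$ the set of points of $\aff(G)$ at distance at most $\eps$ from $G$. Applying \lemref{lemJ2.6} to $E = G_{-\eps}$ provides an open neighborhood $V \supset G_{-\eps}$ with $V \cap H_1^- \cap H_2^- \subset A$; for $\delta$ small enough that $G_{-\eps} + S_\delta \subset V$, any $g \in G_{-\eps}$ and $s \in D_\delta$ satisfy $g+s \in V \cap H_1^- \cap H_2^- \subset A$ and $|s| \leq \delta$, which yields $P_\delta \supset G_{-\eps} + D_\delta$. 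Conversely, I claim that for each fixed $\eps > 0$ we have $P_\delta \subset G_{+\eps} + S_\delta$ for all sufficiently small $\delta$. If this failed there would exist $\delta_n \to 0$ and $a_n \in A$ whose orthogonal projections $g_n$ onto $\aff(G)$ satisfy $|a_n - g_n| \leq \delta_n$ and $\dist(g_n, G) > \eps$; since $A$ is bounded we may pass to convergent subsequences, obtaining a point of $A \cap \aff(G)$ at distance $\geq \eps$ from $G$, contradicting the standard fact that $A \cap \aff(G) = G$ for the face $G$ of $A$.

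Finally I fiber $P_\delta \triangle Q_\delta$ over $\aff(G)$. Over $G_{-\eps}$ both fibers equal $D_\delta$, so the symmetric-difference fiber is empty; over $\aff(G)\setminus G_{+\eps}$ the $P_\delta$-fiber is empty by the upper containment, while the $Q_\delta$-fiber is empty because $G \subset G_{+\eps}$. All remaining mass therefore sits over $G_{+\eps}\setminus G_{-\eps}$, with fibers contained in $S_\delta$ and hence of $m_2$-measure at most $\pi\delta^2$. Fubini yields $m_d(P_\delta \triangle Q_\delta) \leq \pi\delta^2 \cdot m_{d-2}(G_{+\eps} \setminus G_{-\eps})$ for all $\delta$ small (depending on $\eps$); since the relative boundary of the $(d-2)$-dimensional polytope $G$ has $m_{d-2}$-measure zero, $m_{d-2}(G_{+\eps} \setminus G_{-\eps}) \to 0$ as $\eps \to 0$, and we conclude $m_d(P_\delta \triangle Q_\delta) = o(\delta^2)$. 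I expect the main subtlety to lie in the upper containment $P_\delta \subset G_{+\eps} + S_\delta$, since one must rule out contributions to $P_\delta$ coming from faces of $A$ other than $F_1$ and $F_2$; the compactness argument together with $A \cap \aff(G) = G$ is exactly what handles this.
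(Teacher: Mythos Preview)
Your proof is correct and follows essentially the same approach as the paper: define $G_{\pm\eps}$, use \lemref{lemJ2.6} for the inner inclusion, establish the outer inclusion $P_\delta \subset G_{+\eps}+S_\delta$, and bound $m_d(P_\delta \triangle Q_\delta)$ by $\pi\delta^2\, m_{d-2}(G_{+\eps}\setminus G_{-\eps})$. The paper merely asserts the outer inclusion without justification, whereas your compactness argument based on $A\cap\aff(G)=G$ supplies exactly the detail the paper omits.
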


\begin{proof}
The proof is similar to that of \lemref{lemJ2.15}.
For each $\eps > 0$, let $G_{+\eps}$ be the set of all
points of $\aff(G)$ whose distance from $G$ is at most $\eps$,
and $G_{-\eps}$ be the set of all
points of $G$ whose distance from the relative boundary of $G$ is at least $\eps$.
If $\delta_0 = \delta_0(A,G,\eps) > 0$ is small enough, then for
every $\delta < \delta_0$ we have
\[
(G_{-\eps} + S_\delta)
\cap H^{-}_1 \cap H^{-}_2
\subset
P_\delta \subset  (G_{+\eps} + S_\delta)
\cap H^{-}_1 \cap H^{-}_2
\]
(here, the left inclusion may be inferred from \lemref{lemJ2.6}),
and hence
\[
P_\delta \triangle Q_\delta \subset
((G_{+\eps} \setminus G_{-\eps}) + S_\delta)
\cap H^{-}_1 \cap H^{-}_2.
\]
This implies that
\[
m_d (P_\delta \triangle Q_\delta) \leq 
m_{d-2} (G_{+\eps} \setminus G_{-\eps})
\, m_2( S_\delta).
\]
But we have
$ m_2( S_\delta) = \pi \delta^2$, while
$m_{d-2} (G_{+\eps} \setminus G_{-\eps})$
 tends to zero as $\eps \to 0$.
\end{proof}

\subsection{}

\begin{lem}
\label{lemJ2.8}
Let $A$ and $B$ be two convex polytopes in $\R^d$ with nonempty interiors. 
Let $L$ be one of the facets of $B$, let $\xi$ 
be the exterior normal unit vector of $B$ at the facet $L$,
and let $E$ be a compact subset of $\relint(L)$.
Suppose that $A$ has a facet $F$ on which
the exterior normal unit vector is $- \xi$,
and let $U$ be an open neighborhood of the set $E - F$. Then
there is an open neighborhood  $V$ of $E$ 
such that for any $t$, if $A+t$ and $B$ have disjoint
interiors and if $A+t$ intersects $V$, then $t \in U$.
\end{lem}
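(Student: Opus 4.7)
The plan is to proceed by contradiction, following the template of \lemref{lemE2.1} but adapted to the polytope setting, using the fact that a point in the relative interior of a facet has a unique exterior unit normal. Suppose the conclusion fails. Then taking $V_n = \{y : \dist(y, E) < 1/n\}$, we can find for each $n$ a vector $t_n \notin U$ such that $A + t_n$ and $B$ have disjoint interiors and $A + t_n$ meets $V_n$ at some point $x_n$. Since $E$ is compact and $\dist(x_n, E) \to 0$, after extracting a subsequence we have $x_n \to x$ for some $x \in E \subset \relint(L)$. Boundedness of $A$ then forces $\{t_n\}$ to be bounded as well, and passing to a further subsequence we may assume $t_n \to t$. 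Because $A + t$ is closed, $x \in A + t$.

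Next I would verify that the property ``$A+t_n$ and $B$ have disjoint interiors'' is preserved in the limit: any point in $\interior(A+t) \cap \interior(B)$ would, by openness, also lie in $\interior(A+t_n) \cap \interior(B)$ for all large $n$, a contradiction. Thus $A + t$ and $B$ are two convex bodies with disjoint interiors both containing $x$, so they admit a separating hyperplane $H$ passing through $x$. The crucial observation is that $x \in \relint(L)$, which means the outward normal cone of $B$ at $x$ is the single ray $\R_{\ge 0} \, \xi$; consequently $H(B, \xi)$ is the unique support hyperplane of $B$ at $x$, and so $H$ must coincide with $H(B, \xi)$.

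It follows that $H(B, \xi)$ is also a support hyperplane of $A + t$ at $x$, with exterior normal $-\xi$ (since $B$ and $A+t$ lie on opposite sides of $H$). Translating by $-t$, $H(B, \xi) - t$ is a support hyperplane of $A$ at $x - t$ with exterior normal $-\xi$. By uniqueness of the support hyperplane of $A$ with a prescribed exterior normal, we conclude $H(B, \xi) - t = H(A, -\xi)$, and hence $x - t$ lies in $A \cap H(A, -\xi) = S(A, -\xi) = F$. Therefore $t \in x - F \subset E - F \subset U$, which, together with the openness of $U$ and $t_n \to t$, contradicts $t_n \notin U$ for all $n$.

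The substantive step is identifying the separating hyperplane with $H(B, \xi)$; everything else is a standard compactness-and-limit argument in the spirit of \lemref{lemE2.1}. The facet geometry on the $A$ side is only used at the very end, via the uniqueness of support hyperplanes with a given outward normal, to read off $x - t \in F$.
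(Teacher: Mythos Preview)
Your argument is correct and is precisely the adaptation of the proof of \lemref{lemE2.1} that the paper has in mind: the role of the regular boundary point is taken over by a point $x\in\relint(L)$, which forces the separating hyperplane to be $H(B,\xi)$, and then uniqueness of the support hyperplane of $A$ with exterior normal $-\xi$ yields $x-t\in F$ and hence $t\in E-F\subset U$. The paper itself omits the details and simply states that the lemma ``can be proved in essentially the same way as \lemref{lemE2.1}'', so your write-up matches the intended proof.
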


This can be proved in essentially the same way as \lemref{lemE2.1} above.

\subsection{}

\begin{lem}
\label{lemJ2.7}
Let $A$ and $B$ be two  convex polytopes in $\R^d$
 with nonempty, disjoint interiors, and suppose that $A$ and $B$ share
a common facet $F$. Assume that
$\mu$ is a positive, locally finite measure such that
$\1_A \ast \mu \geq 1$ a.e.\ on $A$, while
$\1_A \ast \mu = 0$ a.e.\ on $B$. 
Let $\mu'$ denote the restriction of the measure $\mu$ to
$\relint(F - F)$. Then we have
$\1_{F} \ast \mu' \geq 1$ a.e.\ with respect to
the $(d-1)$-dimensional volume measure on the facet $F$.
\end{lem}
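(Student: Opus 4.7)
The plan is to establish, for every compact $E\subset\relint(F)$, the integral inequality
\[
m_{d-1}(E)\le\int_E (\1_F\ast \mu')\,dm_{d-1},
\]
from which $\1_F\ast\mu'\ge1$ a.e.\ on $\relint(F)$, and hence a.e.\ on $F$, follows by a compact-exhaustion argument applied to the level sets $\{y\in\relint(F):(\1_F\ast\mu')(y)<1-\eps\}$. To this end, let $\xi$ denote the exterior unit normal of $A$ at $F$ and set $c:=\dotprod{y}{\xi}$ for $y\in F$. Form the thin prism $Q_\delta(E):=\{y-s\xi:y\in E,\,0\le s\le \delta\}$, which by \lemref{lemJ2.5} lies inside $A$ for $\delta$ sufficiently small, with $m(Q_\delta(E))=\delta\,m_{d-1}(E)$. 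Integrating the hypothesis $\1_A\ast\mu\ge1$ over $Q_\delta(E)$ and exchanging the order of integration yields
\[
\delta\,m_{d-1}(E)\le\int m\big((A+t)\cap Q_\delta(E)\big)\,d\mu(t).
\]

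Next I would invoke \lemref{lemJ2.8} (applied with $B$ in its stated role and with $F$ the facet of both $A$ and $B$) to restrict the integration to $t$ in a small bounded open neighborhood $U$ of $E-F$, on which $\mu(U)<\infty$ by local finiteness. Writing $t=t_{||}+t_\perp\xi$ with $t_{||}$ in the subspace $V_F$ parallel to $\aff(F)$, a Fubini calculation along the $\xi$-direction gives
\[
m\big((A+t)\cap Q_\delta(E)\big)=\int_{\max(t_\perp,0)}^{\delta+t_\perp} m_{d-1}\big(E\cap(A_r+t_{||})\big)\,dr,
\]
where $A_r:=\bigl(A\cap\{\dotprod{\cdot}{\xi}=c-r\}\bigr)+r\xi\subset\aff(F)$ is the slice of $A$ at $\xi$-depth $r$ lifted back to $\aff(F)$, so that $A_0=F$. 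Dividing by $\delta$ and letting $\delta\to0^+$, the pointwise limit (by continuity of $r\mapsto m_{d-1}(E\cap(A_r+t_{||}))$, which inherits continuity from the Hausdorff-continuity of slices of convex bodies) equals $m_{d-1}(E\cap(A_{t_\perp}+t_{||}))$ when $t_\perp\ge0$ and equals $0$ when $t_\perp<0$ (the integration range being eventually empty).

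The main technical obstacle will be to show that for $\mu$-a.e.\ $t$ with $t_\perp>0$ one has $m_{d-1}(E\cap(A_{t_\perp}+t_{||}))=0$, so that in the limit only the trace of $\mu$ on $V_F$ contributes. For such $t\in\supp(\mu)$, \lemref{lemA8.2} gives $m((A+t)\cap B)=0$, and slicing this $d$-dimensional measure by hyperplanes parallel to $\aff(F)$ (then lifting to $\aff(F)$) writes it as
\[
\int_0^{t_\perp} m_{d-1}\bigl((A_{t_\perp-s}+t_{||})\cap B_s^{\vee}\bigr)\,ds=0,
\]
where $B_s^{\vee}$ denotes the analogous lifted slice of $B$, satisfying $B_0^{\vee}=F$; hence this $(d-1)$-measure vanishes for a.e.\ $s\in(0,t_\perp)$. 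Were $m_{d-1}((A_{t_\perp}+t_{||})\cap F)$ positive, the convex intersection would contain a small $(d-1)$-ball in $\aff(F)$; by the Hausdorff-convergences $A_{t_\perp-s}+t_{||}\to A_{t_\perp}+t_{||}$ and $B_s^{\vee}\to F$ as $s\to0^+$, combined with the fact that interior points of a Hausdorff-limit of convex bodies eventually lie in the approximating sets, this ball would persist inside $(A_{t_\perp-s}+t_{||})\cap B_s^{\vee}$ for all small $s$, contradicting the a.e.\ vanishing.

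Finally, the uniform bound $\delta^{-1}m((A+t)\cap Q_\delta(E))\le m_{d-1}(E)$ together with $\mu(U)<\infty$ justifies dominated convergence, and passing to the limit $\delta\to0^+$ yields
\[
m_{d-1}(E)\le\int_{U\cap V_F} m_{d-1}\bigl(E\cap(F+t)\bigr)\,d\mu(t).
\]
Since $E\subset\relint(F)$, one has $E-F\subset\relint(F-F)$, so the integrand is supported in $E-F$ and $\mu$ may be replaced by $\mu'$; a final Fubini identifies the right-hand side with $\int_E(\1_F\ast\mu')\,dm_{d-1}$, completing the plan.
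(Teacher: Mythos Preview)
Your argument is correct and follows a genuinely different route from the paper's. Both proofs localize via \lemref{lemJ2.8} and use \lemref{lemA8.2} to know that $m((A+t)\cap B)=0$ for $t\in\supp(\mu)$, but they diverge from there. The paper first decomposes $\mu=\mu'+\mu''+\mu'''$ according to whether $t$ lies on, near, or far from the hyperplane $V_F$; it then bounds the contribution of each piece to $\1_A\ast\mu$ on a thin slab, invokes the symmetric-difference estimate \lemref{lemJ2.15} to replace the slab $P_\delta=A\cap S_\delta$ by the prism $Q_\delta$, and arrives at $\1_F\ast\mu'\ge 1-2\eta$ on a subset of density $>1-\eta$ before letting $\eta\to 0$. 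You instead integrate the hypothesis directly over the thin prism $Q_\delta(E)$, slice in the $\xi$-direction, and pass to the limit $\delta\to 0$ by dominated convergence; your key device for killing the off-hyperplane contribution is the Hausdorff-continuity of convex slices combined with the persistence of relative-interior points under Hausdorff limits, which lets you deduce $m_{d-1}((A_{t_\perp}+t_{||})\cap F)=0$ from $m((A+t)\cap B)=0$. This bypasses \lemref{lemJ2.15} and the auxiliary $\eta$-parameter entirely, yielding the exact integral inequality $m_{d-1}(E)\le\int_E(\1_F\ast\mu')\,dm_{d-1}$ in one stroke. The trade-off is that your slice-continuity step (that $r\mapsto m_{d-1}(E\cap(A_r+t_{||}))$ is continuous, and the persistence-of-balls argument) leans on standard convex-geometry facts that deserve a line of justification, whereas the paper's $o(\delta)$ estimate is self-contained.
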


Notice that the convolution $\1_{F} \ast \mu'$ vanishes
outside of $\aff(F)$, since $F+t$ lies on $\aff(F)$ for
every $t \in \supp(\mu')$. 
An equivalent way to  formulate the conclusion of the lemma 
is  to say that if $\sigma_F$ denotes
 the $(d-1)$-dimensional volume measure restricted to
 the facet $F$,
then we have $(\sigma_F \ast \mu')(E) \geq \sigma_F(E)$
for every Borel set $E$.

\begin{proof}[Proof of \lemref{lemJ2.7}]
By applying a rotation and a translation we may
assume that the facet $F$ is contained in the hyperplane
$\{ x: x_1 = 0 \}$. Hence 
$F$ has the form $F = \{ 0 \} \times \Om$,
where $\Om$ is a convex polytope in $\R^{d-1}$ with
nonempty interior.
We may also suppose that $A \subset \{ x: x_1 \geq 0 \}$
and  $B \subset \{ x: x_1 \leq 0 \}$.

Let $\eta > 0$, and let
$\Sig$ be a compact subset of $\interior(\Om)$.
Then the set $E := \{ 0 \} \times \Sig$
is a compact subset of $\relint(F) = \{  0 \} \times \interior(\Om)$.
Choose  $\eps>0$ such that the total mass of the measure $\mu$
in the set $((-\eps,0) \cup (0, \eps)) \times 
 \interior (\Om-\Om)$ is less than $\eta$, and define
 $U := (-\eps, \eps) \times  \interior (\Om-\Om)$.
Using \lemref{lemJ2.11} we  see that
$U$ is an open neighborhood of the set $E - F$.
By \lemref{lemJ2.8} there is an open neighborhood $V$ 
of $E$  such that for any
 $t$, if $A+t$ and $B$ have disjoint interiors
and if $A+t$ intersects
$V$, then $t \in U$.

Consider the following three subsets of $\R^d$:
\begin{enumerate-roman}
\item \label{J1.2.i} $Y' := \{0\} \times \interior (\Om - \Om) =  \relint(F-F)$;
\item \label{J1.2.ii} $Y'' := ((-\eps,0) \cup (0, \eps)) \times  \interior (\Om-\Om)$;
\item \label{J1.2.iii} $Y''' := (Y' \cup Y'')^\cm = U^\cm$.
\end{enumerate-roman}
Then the  sets $ Y', Y'', Y'''$ 
are pairwise disjoint, and they cover the whole space.
It follows that we may decompose the measure $\mu$ into
the sum $\mu = \mu' + \mu'' + \mu'''$, where the three measures
$\mu' , \mu'' , \mu'''$ are the restrictions of $\mu$ to
the sets $ Y', Y'', Y'''$ respectively.

The assumption that $\1_A \ast \mu = 0$ a.e.\ on $B$ implies that
the sets $A+t$ and $B$ must have disjoint interiors for every
$t \in \supp(\mu)$ (\lemref{lemA8.2}).
 Since the support of the measure $\mu'''$ is 
contained in $\supp(\mu) \cap U^\cm$, it follows that
if $t \in \supp(\mu''')$ then $A+t$ cannot intersect $V$.
 Hence we have $\1_A \ast \mu''' = 0$ a.e.\ in $V$.
We also have
\[
\| \1_A \ast \mu''  \|_{L^\infty(\R^d)} \leq 
\int_{\R^d} d\mu'' = \mu(Y'') < \eta.
\]
Combining this with
the assumption that $\1_A \ast \mu \geq 1$ a.e.\ on $A$,
 this implies that
\begin{equation}
\label{eq:J2.7.3}
\1_A \ast \mu'  =
\1_A \ast \mu  - \1_A \ast \mu'' -
\1_A \ast \mu''' \geq 1 - \eta \quad \text{a.e.\ on  $A \cap V$.}
\end{equation}

For each $\delta > 0$ we let
$P_\delta := A \cap S_\delta$ be the intersection of $A$ with
the slab $S_\delta :=  [0, \delta] \times \R^{d-1}$, and
we also consider the prism $Q_{\delta} := [0,  \delta] \times \Om$.
Then by \lemref{lemJ2.15}, we can choose $\delta$
 small enough such that
\begin{equation}
\label{eq:J2.7.16}
m(P_\delta \triangle Q_\delta) \, \mu(F-F) < \delta  \eta^2 \, m_{d-1}(\Sig).
\end{equation}
We can also assume, by choosing $\delta$ small enough,
that the set $D_\delta := [0,\delta] \times \Sigma$
is contained in both  $V$ and $A$
(the inclusion in $A$ can be deduced from \lemref{lemJ2.5}).

The support of the measure $\mu'$ is contained in 
the hyperplane $\{ 0 \} \times \R^{d-1}$. 
For each $t \in \supp(\mu')$ we therefore have 
$(A+t) \cap S_\delta = P_\delta + t$. This implies that
$ \1_A \ast \mu' = \1_{P_\delta} \ast \mu'$ a.e.\ on the
slab $S_\delta$. In particular,  it follows from \eqref{eq:J2.7.3} that
\begin{equation}
\label{eq:J2.7.5}
\1_{P_\delta} \ast \mu'  \geq 1 - \eta \quad \text{a.e.\ on  $D_\delta$.}
\end{equation}
Let $D'_\delta$ be the set of all points
$x \in D_\delta$ such that $(\1_{P_\delta \triangle Q_{\delta}} \ast \mu')(x)
  < \eta$, then we have
\[
\1_{Q_{\delta}} \ast \mu' \geq
\1_{P_\delta} \ast \mu' -
 \1_{P_\delta \triangle Q_{\delta}} \ast \mu'
\geq 1 - 2 \eta \quad \text{a.e.\ on $D'_\delta$,}
\]
which follows from \eqref{eq:J2.7.5}. On the other hand,
by \eqref{eq:J2.7.16} we have
\[
\int_{\R^d} (\1_{P_\delta \triangle Q_{\delta}} \ast \mu' )\, dm =
m(P_\delta \triangle Q_{\delta}) \, \int_{\R^d} d\mu'
< \delta \eta^2 \, m_{d-1}(\Sig),
\]
which in turn implies that
\[
m (D_\delta \setminus D'_\delta) 
\leq \eta^{-1}
\int_{\R^d} (\1_{P_\delta \triangle Q_{\delta}} \ast \mu' )\, dm 
< \delta \eta \, m_{d-1}(\Sig) = \eta \, m (D_\delta),
\]
that is, we have $m(D'_\delta) > (1-\eta) m(D_\delta)$.
We conclude that
\begin{equation}
\label{eq:J2.7.9}
\text{$\1_{Q_{\delta}} \ast \mu' \geq 1 - 2 \eta$\,
a.e.\ on $D'_\delta$}, \quad
D'_\delta \subset D_\delta, \quad
m(D'_\delta) > (1-\eta) m(D_\delta).
\end{equation}

Now recall that the support of the measure $\mu'$ is contained
in  $\{ 0 \} \times \R^{d-1}$. This
implies that the value of 
 $(\1_{Q_\delta} \ast \mu')(x)$ does not depend,
 in the slab $S_\delta$, on the first coordinate
$x_1$ of the point $x$.
Hence it follows from \eqref{eq:J2.7.9} that
$\1_{F} \ast \mu' \geq 1 - 2 \eta$ a.e.\ with
respect to the $(d-1)$-dimensional volume measure 
on some set of the form $\{0\} \times \Sig'$, where
$\Sig'$ is a subset of $\Sig$ (which depends on both
$\Sig$ and $\eta$) such that
$m_{d-1}(\Sig') \geq  (1-\eta) m_{d-1}(\Sig)$.

However, as the number $\eta$ was arbitrary, it follows
that   we must have
$\1_{F} \ast \mu' \geq 1$ a.e.\ with
respect to the $(d-1)$-dimensional volume measure 
on $\{0\} \times \Sig$. In turn, $\Sig$ was an arbitrary
compact subset of $\interior(\Om)$. Using the fact that
$\bd{\Om}$ is a set of measure zero in $\R^{d-1}$, this implies that
$\1_{F} \ast \mu' \geq 1$ a.e.\ with
respect to the $(d-1)$-dimensional volume measure 
on $\{0\} \times \Om = F$, which is what  we had to prove.
\end{proof}

\subsection{}

\begin{lem}
\label{lemJ2.10}
Let $A$ be a convex polytope in $\R^d$
 with nonempty interior, and let $G$ be a subfacet of $A$.
Let $F_1$ and $F_2$ be the two adjacent facets of $A$ that meet at
the subfacet $G$, and let $H^{-}_1$ and $H^{-}_2$ be the
support halfspaces of $A$ at the facets $F_1$ and $F_2$ respectively. 
For each $\delta > 0$ we let 
$Q_\delta :=  (G + S_\delta)
\cap H^{-}_1 \cap H^{-}_2$,
where $S_\delta$ is a closed $2$-dimensional ball of radius $\delta$  centered at the
origin and orthogonal to $\aff(G)$. 
Assume that $\mu$ is a positive, finite measure
supported on $G-G$ and satisfying
$\1_{G} \ast \mu \geq 1$ a.e.\ with respect to
the $(d-2)$-dimensional volume measure on  $G$.
Then for any $\eta > 0$ we have
\[
m \{ x  \in Q_\delta : (\1_{A} \ast \mu ) (x) < 1-\eta \} = o( m(Q_\delta) ),
\quad \delta \to 0.
\]
\end{lem}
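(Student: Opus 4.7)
The plan is to place the picture in local orthonormal coordinates so that the low-dimensional support of $\mu$ forces $\1_{Q_\delta} \ast \mu$ to factor as a product, and then to transfer the resulting lower bound from $Q_\delta$ to $A$ via \lemref{lemJ2.13}. After a rigid motion I would assume $\aff(G) = \R^{d-2} \times \{0\} \subset \R^{d-2} \times \R^2$ with $G = G_0 \times \{0\}$, where $G_0$ is a $(d-2)$-dimensional convex polytope with nonempty relative interior. Since the facets $F_1, F_2$ both contain $\aff(G)$, their affine hulls take the form $\R^{d-2} \times \ell_i$ for distinct lines $\ell_i$ through the origin in $\R^2$, and the support halfspaces split as $H_i^- = \R^{d-2} \times H_i'$, where $H_i'$ is a closed halfplane in $\R^2$. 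Consequently $Q_\delta = G_0 \times W_\delta$ with $W_\delta := S_\delta \cap H_1' \cap H_2'$ a circular wedge whose $2$-dimensional area is $\asymp \delta^2$, so $m(Q_\delta) \asymp \delta^2$.

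Next I would identify $\mu$ with a finite positive measure $\mu_0$ on $\R^{d-2}$ (since $\supp(\mu) \subset (G_0 - G_0) \times \{0\}$) and record the product computation
\[
(\1_{Q_\delta} \ast \mu)(y, w) = \1_{W_\delta}(w) \cdot (\1_{G_0} \ast \mu_0)(y)
\]
valid for all $(y, w) \in \R^{d-2} \times \R^2$. The same computation turns the hypothesis $\1_G \ast \mu \geq 1$ a.e.\ on $G$ into $(\1_{G_0} \ast \mu_0)(y) \geq 1$ for $m_{d-2}$-a.e.\ $y \in G_0$. Since $\1_{W_\delta}(w) = 1$ on $Q_\delta$, Fubini yields $\1_{Q_\delta} \ast \mu \geq 1$ $m$-a.e.\ on $Q_\delta$.

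The remaining step is to replace $Q_\delta$ by $A$. Setting $P_\delta := A \cap (\aff(G) + S_\delta) \subset A$, we have $\1_A \ast \mu \geq \1_{P_\delta} \ast \mu \geq \1_{Q_\delta} \ast \mu - \1_{Q_\delta \setminus P_\delta} \ast \mu$, hence
\[
\1_A \ast \mu \geq 1 - \1_{Q_\delta \setminus P_\delta} \ast \mu \quad \text{$m$-a.e.\ on $Q_\delta$.}
\]
By \lemref{lemJ2.13}, $m(Q_\delta \setminus P_\delta) \leq m(Q_\delta \triangle P_\delta) = o(\delta^2)$, and since $\mu$ is finite, Markov's inequality gives
\[
m\bigl\{ x : (\1_{Q_\delta \setminus P_\delta} \ast \mu)(x) > \eta \bigr\} \leq \eta^{-1} m(Q_\delta \setminus P_\delta) \cdot \mu(G - G) = o(\delta^2),
\]
which is $o(m(Q_\delta))$ because $m(Q_\delta) \asymp \delta^2$. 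Combined with the previous a.e.\ inequality, this bounds the exceptional set $\{x \in Q_\delta : (\1_A \ast \mu)(x) < 1 - \eta\}$ by $o(m(Q_\delta))$, as required.

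The only step calling for genuine care, rather than any serious obstacle, is the product decomposition of the convolution: the coordinates must be chosen so that $\supp(\mu)$ lies entirely in the first $d-2$ factors and so that the slab perturbation $S_\delta$ is transverse to $\aff(G)$. Once this is in place, the argument reduces to a short Fubini-plus-Markov computation combined with the measure-theoretic approximation of $A$ by the product $Q_\delta$ supplied by \lemref{lemJ2.13}.
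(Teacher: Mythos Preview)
Your proof is correct and follows essentially the same route as the paper's: both establish $\1_{Q_\delta}\ast\mu\ge 1$ a.e.\ on $Q_\delta$ via Fubini, then pass from $Q_\delta$ to $A$ using \lemref{lemJ2.13} and Markov's inequality. The only cosmetic differences are that the paper records the identity $\1_A\ast\mu=\1_{P_\delta}\ast\mu$ on $Q_\delta$ (from $(A+t)\cap(\aff(G)+S_\delta)=P_\delta+t$ for $t\in\supp\mu$) rather than your inequality $\1_A\ast\mu\ge\1_{P_\delta}\ast\mu$, and works with $P_\delta\triangle Q_\delta$ where you use $Q_\delta\setminus P_\delta$; your version is in fact slightly leaner since only the lower bound is needed.
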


\begin{proof}
Let $P_\delta = A \cap (\aff(G) + S_\delta)$ be the set of all points 
of $A$ whose distance from $\aff(G)$ is not greater than $\delta$.
The assumption that  $\supp(\mu) \subset G-G$ implies that 
we have $(A+t) \cap  (\aff(G) + S_\delta) = P_\delta + t$
for every $t \in \supp(\mu)$, and hence 
$\1_A \ast \mu = \1_{P_\delta} \ast \mu$
a.e.\ on  $\aff(G) + S_\delta$.  In particular,
\begin{equation}
\label{eq:J2.10.5}
\1_A \ast \mu = \1_{P_\delta} \ast \mu \quad \text{a.e.\ on $Q_\delta$.}
\end{equation}

We have also assumed
that $\1_{G} \ast \mu \geq 1$ a.e.\ with respect to
the $(d-2)$-dimensional volume measure on  $G$.
Using Fubini's theorem this implies that
\begin{equation}
\label{eq:J2.10.10}
\1_{Q_\delta} \ast \mu \geq 1 \quad \text{a.e.\ on $Q_\delta$.}
\end{equation}

Denote by $D_{\delta,\eta}$ the set of  all points
$x \in Q_\delta$ such that $(\1_{P_\delta \triangle Q_{\delta}} \ast \mu)(x)
  < \eta$. Then
\[
\1_A \ast \mu = \1_{P_\delta} \ast \mu \geq
\1_{Q_{\delta}} \ast \mu -
 \1_{P_\delta \triangle Q_{\delta}} \ast \mu
> 1 -  \eta \quad \text{a.e.\ on $D_{\delta, \eta},$}
\]
which follows from \eqref{eq:J2.10.5} and \eqref{eq:J2.10.10}.
Hence to prove the assertion of the lemma, it is enough to show
that $m(Q_\delta \setminus D_{\delta, \eta}) = o(  m(Q_\delta) )$
as $\delta \to 0$. We observe that 
$m(Q_\delta) = m_{d-2}(G) \cdot \half \theta \delta^2$,
where $\theta$ is the dihedral angle of $A$ at its subfacet $G$.
Let $\eps > 0$, then by \lemref{lemJ2.13} there is
 $\delta_0 = \delta_0(A,G,\eta,\eps) > 0$ such that
 for any  $\delta < \delta_0$ we have
\[
m(P_\delta \triangle Q_\delta) \, \mu(G-G) < \eps \, \eta \, m(Q_\delta).
\]
It follows that
\[
\int_{\R^d} (\1_{P_\delta \triangle Q_{\delta}} \ast \mu )\, dm 
= m(P_\delta \triangle Q_{\delta}) \int_{\R^d} d \mu <
\eps \, \eta \, m(Q_\delta),
\]
which in turn implies
\[
m(Q_\delta \setminus D_{\delta, \eta}) \leq \eta^{-1}
\int_{\R^d} (\1_{P_\delta \triangle Q_{\delta}} \ast \mu )\, dm 
< \eps  \, m(Q_\delta).
\]
This  confirms that indeed we have
$m(Q_\delta \setminus D_{\delta, \eta}) = o(  m(Q_\delta) )$
as $\delta \to 0$. 
\end{proof}


\section{Spectral convex polytopes can tile by translations, II}
\label{sect:beltii}

In this section we prove the following theorem, which is
the final result needed for the proof of
Fuglede's conjecture for convex bodies:

\begin{thm}
\label{thmJ1.0}
Let $A \subset \R^d$ be a convex polytope, which is centrally symmetric 
and has centrally symmetric facets. Assume that
the complement $A^\cm$ of $A$
 admits a weak tiling by translates of $A$,
that is, there exists a positive, locally finite measure $\mu$ such that
$\1_A \ast \mu = \1_{A^\cm}$ a.e.
Then each belt of $A$ consists of either $4$ or $6$ facets.
\end{thm}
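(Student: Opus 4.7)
The strategy is to reduce the statement to the classical 2-dimensional case of the Venkov--McMullen theorem (a centrally symmetric convex polygon that tiles $\R^2$ by translations is a parallelogram or a centrally symmetric hexagon), by projecting $A$ orthogonally to the subfacet $G$ generating the belt, and using \lemref{lemJ2.7} and \lemref{lemJ2.10} to transfer the weak tiling hypothesis through the projection.

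After translating, assume that the center of symmetry of $A$ lies at the origin. Fix a belt of $A$ with facets $F_1,\ldots,F_{2m}$ generated by a subfacet $G$, let $W$ be the $(d-2)$-dim linear subspace parallel to $\aff(G)$, and let $V=W^{\perp}$ be the orthogonal 2-plane. The outer normals $\xi_i$ of the belt facets all lie in $V$, and the orthogonal projection $\bar A$ of $A$ onto $V$ is a centrally symmetric convex $2m$-gon whose $2m$ edges are the projections of $F_1,\ldots,F_{2m}$; the subfacet $G$ projects to a single vertex $\bar G$ of $\bar A$.

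For each belt facet $F_i$, I would apply \lemref{lemJ2.7} with $A$ in the role of $A$, with $B=A-2y_{F_i}$ (the translate of $A$ across its opposite belt facet $-F_i$, which shares the facet $-F_i$ with $A$), and with the auxiliary measure $\mu_{\text{aux}}:=\delta_{-2y_{F_i}}\ast\mu$, where $y_{F_i}$ is the center of $F_i$. The weak tiling identity $\1_A\ast\mu=\1_{A^\cm}$ gives $\1_A\ast\mu_{\text{aux}}=\1_{(A-2y_{F_i})^\cm}$, which equals $1$ a.e.\ on $A$ and vanishes a.e.\ on $B$, so the hypotheses of \lemref{lemJ2.7} are satisfied. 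The lemma then forces $\mu$ itself to carry nontrivial mass on the hyperplane $\{x:\dotprod{x}{\xi_i}=2c_i\}$, where $c_i=\dotprod{y_{F_i}}{\xi_i}$, and this mass, translated by $-2y_{F_i}$, yields a covering measure for $-F_i$ in the sense of \lemref{lemJ2.7}. Iterating the extraction one dimension down inside $\aff(F_i)$---using that $F_i$ is itself centrally symmetric with $G$ as one of its subfacets, and taking the adjacent $(d-1)$-polytope $F_i+(2y_G-2y_{F_i})$ (where $y_G$ is the center of $G$) as the avoided region---produces a covering measure $\mu_{i,G}$ supported in $W$ with $\1_G\ast\mu_{i,G}\ge 1$ a.e.\ on $G$ with respect to the $(d-2)$-dim volume measure. \lemref{lemJ2.10} then converts $\mu_{i,G}$ into $(1-\eta)$-coverage of the wedge $Q_\delta$ at $G$ modulo an exceptional set of measure $o(m(Q_\delta))$ as $\delta\to 0$.

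Summing the wedge contributions from all $2m$ belt facets (and from the wedge of $A$ itself) yields an angular coverage of $2\pi$ around $G$ in the 2-plane $V$. Projected to $V$, this is a local weak tiling of $\R^2$ by translates of the $2m$-gon $\bar A$ in a neighborhood of $\bar G$; translation invariance of the belt argument at every vertex globalizes this to a weak tiling of $\R^2$ by $\bar A$. Applying \thmref{thmE5} in dimension $d=2$ together with the classical 2-dim Venkov--McMullen theorem then forces $\bar A$ to be a parallelogram or a centrally symmetric hexagon, whence $2m\in\{4,6\}$, as claimed.

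The main obstacle is the iterated extraction step: carefully re-proving \lemref{lemJ2.7} in dimension $d-1$ inside $\aff(F_i)$, and ensuring that the measures $\mu_{i,G}$ obtained from distinct belt facets $F_i$ arise from essentially disjoint portions of $\mu$ so that they combine into a single positive measure on $W$ without double-counting mass. A secondary difficulty is justifying the globalization from local angular coverage at $\bar G$ to a genuine weak tiling of the whole plane by $\bar A$; this may be handled either by replicating the local argument at every vertex of $\bar A$, or by a direct planar angle-count argument using central symmetry of $\bar A$ to bypass \thmref{thmE5} altogether.
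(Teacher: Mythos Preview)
Your proposal has two genuine gaps.

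First, the final step is circular. The Venkov--McMullen theorem characterizes convex bodies that tile \emph{properly}, not weakly, and \thmref{thmE5} tells you only that a weak-tiling convex body is a polytope---which $\bar A$ already is by construction. The statement ``a centrally symmetric convex $2m$-gon whose complement admits a weak tiling by its translates must have $2m\in\{4,6\}$'' is exactly the $d=2$ instance of \thmref{thmJ1.0}, so you cannot invoke it. The ``direct planar angle-count'' you mention as an alternative is not a side remark; it is the whole problem.

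Second, the iterated extraction is set up incorrectly. The subfacet $G=F_1\cap F_2$ lies on $F_1$ and $F_2$ only; for the other belt facets $F_i$ the relevant subfacet $F_{i-1}\cap F_i$ is a \emph{translate} of $\pm G$, not $G$ itself, so ``$F_i$ is centrally symmetric with $G$ as one of its subfacets'' is false for $i\notin\{1,2\}$. Even after repairing this, you would still need the $2m$ measures $\mu_{i,G}$ to arise from pairwise disjoint portions of $\mu$, and the passage from local angular coverage at $\bar G$ to a genuine weak tiling of all of $\R^2$ is not justified.

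The paper's route is different and much more economical. It never projects to $\R^2$ and never sums over the whole belt. Using \lemref{lemJ2.7} twice it extracts covering measures $\nu''_1,\nu''_2$ for $G$ from only the \emph{two} adjacent facets $F_1,F_2$, as restrictions of $\mu$ to two disjoint sets. The decisive geometric point is that if the belt has $\geq 8$ facets, then the dihedral angles satisfy $\alpha+\beta+\gamma>2\pi$, which forces $M:=(A+\tau_1)\cap(A+\tau_2)$ to be full-dimensional with $G$ as a subfacet. Applying \lemref{lemJ2.10} to $M$ with each of $\nu''_1,\nu''_2$ then gives $\1_A\ast\mu\ge\1_M\ast(\nu''_1+\nu''_2)\ge 2(1-\eta)>1$ on a set of positive measure near $G$, contradicting $\1_A\ast\mu\le 1$ directly. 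So the contradiction comes from an \emph{overcovering} by just two translates, not from assembling a planar tiling.
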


\thmref{thmA21} is then obtained as a consequence of 
\thmref{thmA11} and \thmref{thmJ1.0}.

The rest of the section is devoted to the proof of  \thmref{thmJ1.0}.

\subsection{}
Let $G$ be any one of the subfacets of $A$, and suppose that
 the belt of $A$  generated by $G$ has $2m$ facets. 
Let $F_0, F_1, F_2, \dots, F_{2m-1}, F_{2m} = F_0$ be
an enumeration of the facets of the belt, such that
$F_{i-1}$ and $F_{i}$ are adjacent facets for each
$1 \leq i \leq 2m$. The intersection $F_{i-1} \cap F_{i}$
of any pair of consecutive facets in the belt is then a translate
 of either $G$ or $-G$. We shall suppose that $G$ itself is given by
$G = F_1 \cap F_2$.

Our goal is to  show that
under the assumptions in \thmref{thmJ1.0}, the belt 
can  have only $4$ or $6$ facets, that is, we must have
$m \leq 3$.

The belt generated by $G$ consists of $m$ pairs of
opposite facets $\{F_i, F_{i+m}\}$
$(0 \leq i \leq m-1)$. We can assume, with no loss of generality, that $A$ is
symmetric with respect to the origin, that is, $A = - A$.
Then for each facet $F_i$ in the belt, its 
opposite facet is given by $ - F_i$. Since the facets of $A$
are centrally symmetric, $-F_i$ is a translate of $F_i$, so
there is a  translation vector $\tau_i$ which carries
$-F_i$ onto $F_i$, that is, $F_i = -F_i + \tau_i$.

\subsection{}
Recall that we have assumed that 
the complement $A^\cm$ of $A$
 admits a weak tiling by translates of $A$. This means
that  there exists a positive, locally finite measure $\mu$ such that
$\1_A \ast \mu = \1_{A^\cm}$ a.e.
For each $0 \leq i \leq 2m$
we define 
\begin{equation}
\label{eq:J2.1.6}
T_i := \relint(F_i - F_i) + \tau_i, \quad
\mu'_i := \mu \cdot \1_{T_i}, \quad
\nu'_i := \mu'_i \ast \delta_{-\tau_i}.
\end{equation}

We observe that $\supp(\mu'_i)$ is contained in the hyperplane
passing through
the point $\tau_i$ and which is parallel to the facet $F_i$, while
$\supp(\nu'_i)$ is contained 
in the hyperplane  through the origin which is parallel to $F_i$.

We also notice that we have $-F_i = F_i - \tau_i$ and hence
$T_i = 2 \relint(F_i)$. This implies that the sets $T_i$ 
$(0 \leq i \leq 2m-1)$ are
pairwise disjoint, because any two distinct facets of $A$ have disjoint 
relative interiors.

\begin{lem}
\label{lemJ1.2}
For each $0 \leq i \leq 2m$ we have
 $\1_{F_i} \ast \nu'_i \geq  1$ a.e.\ with respect to
the $(d-1)$-dimensional volume measure on the facet $F_i$.
\end{lem}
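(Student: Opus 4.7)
The plan is to derive Lemma~\ref{lemJ1.2} from a single application of Lemma~\ref{lemJ2.7}. The naive attempt, pairing $A$ with the neighbor $A + \tau_i$ on the $F_i$-side and using the given $\mu$, fails, because the weak tiling identity $\1_A \ast \mu = \1_{A^\cm}$ gives $\1_A \ast \mu = 0$ on $A$, the \emph{opposite} of the hypothesis $\1_A \ast \mu \geq 1$ on $A$ required by Lemma~\ref{lemJ2.7}. The remedy I will use is to shift the measure and consider instead the neighbor $A - \tau_i$ on the $-F_i$-side: set $\lambda := \mu \ast \delta_{-\tau_i}$.

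A direct convolution computation, using $\lambda(E) = \mu(E + \tau_i)$, gives
\[
(\1_A \ast \lambda)(x) = (\1_A \ast \mu)(x + \tau_i) = \1_{A^\cm}(x + \tau_i) = \1_{(A - \tau_i)^\cm}(x).
\]
By the central symmetry of $F_i$, the relation $F_i = -F_i + \tau_i$ rearranges to $-F_i = F_i - \tau_i$, showing that $-F_i$ is a common facet of $A$ and $A - \tau_i$ and that the two polytopes lie on opposite sides of the hyperplane containing it, so their interiors are disjoint. Consequently $\1_A \ast \lambda = 1$ a.e.\ on $A$ and $\1_A \ast \lambda = 0$ a.e.\ on $A - \tau_i$, and the hypotheses of Lemma~\ref{lemJ2.7} are satisfied for the pair $(A,\, A - \tau_i)$ with common facet $-F_i$ and measure $\lambda$.

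The lemma then yields $\1_{-F_i} \ast \lambda' \geq 1$ a.e.\ on $-F_i$, where $\lambda'$ is the restriction of $\lambda$ to $\relint((-F_i) - (-F_i)) = \relint(F_i - F_i)$. For $E \subset \relint(F_i - F_i)$ one has $E + \tau_i \subset T_i$, so
\[
\lambda'(E) = \mu(E + \tau_i) = \mu'_i(E + \tau_i) = \nu'_i(E),
\]
that is, $\lambda' = \nu'_i$. Using once more $-F_i = F_i - \tau_i$, hence $\1_{-F_i}(z) = \1_{F_i}(z + \tau_i)$, we obtain $(\1_{-F_i} \ast \nu'_i)(x) = (\1_{F_i} \ast \nu'_i)(x + \tau_i)$; substituting $x' = x + \tau_i$ (which ranges over $F_i$ as $x$ ranges over $-F_i$) converts the inequality into $\1_{F_i} \ast \nu'_i \geq 1$ a.e.\ on $F_i$, as required. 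The hard part is conceptual rather than technical: one must notice that the hypotheses of Lemma~\ref{lemJ2.7} are fitted by the \emph{shifted} weak tiling rather than the original one, with $A$ itself playing the role of the ``covered'' polytope on the $-F_i$ side.
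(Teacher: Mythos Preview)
Your proof is correct and follows essentially the same approach as the paper's: both shift the measure to $\rho = \mu \ast \delta_{-\tau_i}$ (your $\lambda$) and make a single application of Lemma~\ref{lemJ2.7}. The only cosmetic difference is that the paper applies Lemma~\ref{lemJ2.7} directly to the pair $(A+\tau_i,\, A)$ with common facet $F_i$, obtaining the conclusion on $F_i$ immediately, whereas you apply it to the translated pair $(A,\, A-\tau_i)$ with common facet $-F_i$ and then translate back; the two setups differ by a shift of $\tau_i$.
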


\begin{proof}
Let $A' := A + \tau_i$ and $B' := A$.  Then $A'$ and $B'$ 
are two  convex polytopes in $\R^d$
 with nonempty, disjoint interiors, and they share $F_i$ as 
a common facet.  Let $\rho := \mu \ast \delta_{-\tau_i}$, then we have
$\1_{A'} \ast \rho = \1_{A^\cm}$ a.e.,
and in particular, 
$\1_{A'} \ast \rho \geq 1$ a.e.\ on $A'$, and
$\1_{A'} \ast \rho = 0$ a.e.\ on $B'$. Since $\nu'_i$ is the
restriction of the measure $\rho$ to
$\relint(F_i - F_i)$, we can apply \lemref{lemJ2.7} 
and conclude that
$\1_{F_i} \ast \nu'_i \geq 1$ a.e.\ with respect to
the $(d-1)$-dimensional volume measure on the facet $F_i$,
as we had to show.
\end{proof}

\subsection{}
Let $C_i := \convex\{F_i, F_i - \tau_i\}$ be
the prism contained in $A$ with bases $F_i$ and $F_i - \tau_i = -F_i$.
We  also define the prism 
$D_i :=  C_i + \tau_i = \convex\{F_i,F_i + \tau_i\}$.

\begin{lem}
\label{lemJ1.3}
For each $0 \leq i \leq 2m$ we have
 $\1_{C_i} \ast \mu'_i \geq 1$ a.e.\ on $D_i$.
\end{lem}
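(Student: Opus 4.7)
The plan is to reduce the desired inequality to \lemref{lemJ1.2} by translating $C_i$ over to $D_i$. By definition $\mu'_i = \nu'_i \ast \delta_{\tau_i}$, so
\[
\1_{C_i} \ast \mu'_i \;=\; \1_{C_i} \ast \delta_{\tau_i} \ast \nu'_i \;=\; \1_{C_i+\tau_i} \ast \nu'_i \;=\; \1_{D_i} \ast \nu'_i.
\]
Hence it is enough to prove $\1_{D_i} \ast \nu'_i \geq 1$ a.e.\ on $D_i$. This is the natural thing to do, because $\nu'_i$ lives in the hyperplane $H_0$ through the origin parallel to $F_i$, while $D_i = F_i + [0,1]\tau_i$ is the prism that has $F_i$ as one of its bases; translating $D_i$ by a vector of $H_0$ simply slides each level set $F_i + s\tau_i$ within its own parallel hyperplane.

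Next I would set up this slicing rigorously. Since $F_i$ and $-F_i = F_i - \tau_i$ are two distinct parallel facets of $A$, the vector $\tau_i$ is transverse to $\aff(F_i)$; writing $\xi_i$ for the exterior normal unit vector at $F_i$, we have $\dotprod{\tau_i}{\xi_i}\neq 0$, and the affine map $\Phi\colon F_i\times[0,1]\to D_i$, $\Phi(y,s) := y+s\tau_i$, is a bijection. For any $h\in H_0$ (so $\dotprod{h}{\xi_i}=0$) and any $x = y+s\tau_i\in D_i$, comparing $\xi_i$-components shows that $x-h\in D_i$ forces the parameter $s$ to be unchanged in the decomposition, so
\[
x-h\in D_i \;\iff\; y-h\in F_i.
\]
Since $\supp(\nu'_i)\subset \relint(F_i-F_i)\subset H_0$, this equivalence yields
\[
(\1_{D_i}\ast\nu'_i)(y+s\tau_i) \;=\; \int \1_{F_i}(y-h)\,d\nu'_i(h) \;=\; (\1_{F_i}\ast\nu'_i)(y),
\]
valid for every $(y,s)\in F_i\times[0,1]$.

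The conclusion then drops out of \lemref{lemJ1.2}: the exceptional set $N\subset F_i$ on which $\1_{F_i}\ast\nu'_i<1$ is $m_{d-1}$-null, so the exceptional set in $D_i$ is contained in $\Phi(N\times[0,1])$, which is $m_d$-null because $\Phi$ is a non-degenerate affine map and $N\times[0,1]$ is a null set in $\aff(F_i)\oplus\R\xi_i\cong\R^d$. Combining with the opening identity gives $\1_{C_i}\ast\mu'_i = \1_{D_i}\ast\nu'_i\geq 1$ a.e.\ on $D_i$. The only delicate point is the geometric equivalence $x-h\in D_i \Leftrightarrow y-h\in F_i$, and this really is just the observation that $\nu'_i$ is transported in directions parallel to $F_i$, so it cannot move a point out of one slice of the prism into another.
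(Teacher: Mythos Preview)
Your proof is correct and follows essentially the same approach as the paper: reduce $\1_{C_i}\ast\mu'_i$ to $\1_{D_i}\ast\nu'_i$ via the translation by $\tau_i$, observe that $(\1_{D_i}\ast\nu'_i)(y+s\tau_i)=(\1_{F_i}\ast\nu'_i)(y)$ because $\nu'_i$ is supported in the hyperplane through the origin parallel to $F_i$, and then invoke \lemref{lemJ1.2} together with Fubini to handle the null exceptional set. The paper's version is slightly terser but the logic is identical.
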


\begin{proof}
By \lemref{lemJ1.2} the facet $F_i$ has a subset
$E_i$ of full $(d-1)$-dimensional volume measure,
 such that we have
$(\1_{F_i} \ast \nu'_i)(y) \geq 1$ for every $y \in E_i$.
By  Fubini's theorem, the set of all points $x$  of the form
$x = y + \lam \tau_i$, $y \in E_i$, $0 \leq \lam \leq 1$,
constitutes a subset $D'_i$ of $D_i$
of full $d$-dimensional volume measure. 
Since $\supp(\nu'_i)$ is contained 
in the hyperplane  through the origin  parallel to $F_i$,
it follows that for any point $x$ of the above form we have
 $(\1_{D_i} \ast \nu'_i)(x) = (\1_{F_i} \ast \nu'_i)(y) \geq 1$.
We obtain that
 $\1_{D_i} \ast \nu'_i \geq 1$ a.e.\ on $D_i$. But this is
equivalent to the assertion of the lemma.
\end{proof}

In order to better understand the assertion of \lemref{lemJ1.3}, observe that
the prism $D_i$ is contained (up to measure zero)
in the complement $A^\cm$ of $A$, and hence we trivially have
the ``weak covering'' property  $\1_A \ast \mu \geq 1$ a.e.\ on $D_i$.
However the point of the lemma is that only the part of $A$
which lies in the prism $C_i$, and only the part of the measure $\mu$
which lies in  $T_i$, can in fact contribute to
 this covering (see Figure \ref{fig:img0945}).


\begin{figure}[ht]
\centering

\begin{tikzpicture}[scale=0.0375, style=mystyle]

\def\ax{100};
\def\ay{100};
\def\bx{142};
\def\by{82};
\def\cx{160};
\def\cy{47}
\def\dx{153};
\def\dy{25};
\def\ex{148};
\def\ey{15};
\def\zx{63};
\def\zy{96};
\def\ox{92.5};
\def\oy{47.5};
\def\aax{2*\ox - \ax}
\def\aay{2*\oy - \ay}
\def\bbx{2*\ox - \bx}
\def\bby{2*\oy - \by}
\def\ccx{2*\ox - \cx}
\def\ccy{2*\oy - \cy}
\def\ddx{2*\ox - \dx}
\def\ddy{2*\oy - \dy}
\def\eex{2*\ox - \ex}
\def\eey{2*\oy - \ey}
\def\zzx{2*\ox - \zx}
\def\zzy{2*\oy - \zy}
\def\pix{{2*\ax -  (\bbx)}}
\def\piy{{2*\ay -  (\bby)}}
\def\qix{{2*\bx -  (\aax)}}
\def\qiy{{2*\by -  (\aay)}}
\def\pjx{{2*\ax -  (\zzx)}}
\def\pjy{{2*\ay -  (\zzy)}}
\def\qjx{{2*\zx -  (\aax)}}
\def\qjy{{2*\zy -  (\aay)}}
\def\hiax{\ax*3 - 2*\bx}
\def\hiay{\ay*3 - 2*\by}
\def\hibx{-1.5*\ax + 2.5* \bx}
\def\hiby{-1.5*\ay + 2.5* \by}
\def\hjax{\ax*3.5 - 2.5*\zx}
\def\hjay{\ay*3.5 - 2.5*\zy}
\def\hjbx{-1.5*\ax + 2.5* \zx}
\def\hjby{-1.5*\ay + 2.5* \zy}
\def\cix{{0.5*\ax + 0.5*(\bbx)}}
\def\ciy{{0.5*\ay + 0.5*(\bby)}}
\def\dix{{1.75*\ax - 0.75*(\bbx)}}
\def\diy{{1.75*\ay - 0.75*(\bby)}}
\def\tauix{\ax+\bx-\ox}
\def\tauiy{\ay+\by-\oy}
\def\tax{2*\ax-\ox}
\def\tay{2*\ay-\oy}
\def\tbx{2*\bx-\ox}
\def\tby{2*\by-\oy}

\coordinate (O) at (\ox,\oy);
\coordinate (A) at (\ax,\ay);
\coordinate (B) at (\bx,\by);
\coordinate (C) at (\cx,\cy);
\coordinate (D) at (\dx,\dy);
\coordinate (E) at (\ex,\ey);
\coordinate (Z) at (\zx,\zy);
\coordinate (AA) at (\aax,\aay);
\coordinate (BB) at (\bbx,\bby);
\coordinate (CC) at (\ccx,\ccy);
\coordinate (DD) at (\ddx,\ddy);
\coordinate (EE) at (\eex,\eey);
\coordinate (ZZ) at (\zzx,\zzy);
\coordinate (PI) at (\pix,\piy);
\coordinate (QI) at (\qix,\qiy);
\coordinate (PJ) at (\pjx,\pjy);
\coordinate (QJ) at (\qjx,\qjy);
\coordinate (HIA) at (\hiax,\hiay);
\coordinate (HIB) at (\hibx,\hiby);
\coordinate (HJA) at (\hjax,\hjay);
\coordinate (HJB) at (\hjbx,\hjby);
\coordinate (CI) at (\cix,\ciy);
\coordinate (DI) at (\dix,\diy);
\coordinate (TAUI) at (\tauix,\tauiy);
\coordinate (TA) at (\tax,\tay);
\coordinate (TB) at (\tbx,\tby);

\draw [myyellow, mydashedb] (O) -- (TA);
\draw [myyellow, mydashedb] (O) -- (TB);

\draw [mygreen] (BB) -- (A) -- (PI) -- (QI) -- (B) -- (AA) ;

\draw(Z) -- (A) -- (B) -- (C);
\draw(ZZ) -- (AA) -- (BB) -- (CC);
\draw [mydasheda]   (C) .. controls (D) and (E) .. (ZZ);
\draw [mydasheda]   (CC) .. controls (DD) and (EE) .. (Z);

\fill (O) circle (\mycirc);
\fill (A) circle (\mycirc);
\fill (B) circle (\mycirc);
\fill (C) circle (\mycirc);
\fill (Z) circle (\mycirc);
\fill (AA) circle (\mycirc);
\fill (BB) circle (\mycirc);
\fill (CC) circle (\mycirc);
\fill (ZZ) circle (\mycirc);

\draw [myred] (TA) -- (TB);
\draw [myred,fill=white] (TA) circle  (\mycirc);
\draw [myred,fill=white] (TB) circle  (\mycirc);
\draw [myred,fill=myred] (TAUI) circle (\mycirc);

\fill [mygreen] (PI) circle (\mycirc);
\fill [mygreen] (QI) circle (\mycirc);

\draw [myred] (TAUI) 
	node[xshift=-0.2cm,yshift=-0.3cm] 
	{$\boldsymbol{\tau_i}$};

\draw [mygreen] (CI) 
	node[xshift=-0.35cm,yshift=0.25cm] 
	{$\boldsymbol{C_i}$};

\draw [mygreen] (DI) 
	node[xshift=-0.35cm,yshift=0.25cm] 
	{$\boldsymbol{D_i}$};

\draw [decorate,decoration={brace,raise=6pt,amplitude=7pt,
	pre=moveto, pre length=0.15cm,
	post=moveto, post length=0.15cm}, mypurple]
	(TA) -- (TB) 
	node [mypurple,midway,xshift=0.45cm,yshift=0.65cm,rotate=-21] 
	{$\boldsymbol{T_i}$};

\draw [decorate,decoration={brace,raise=5pt,amplitude=4pt,
	pre=moveto, pre length=0.25cm,
	post=moveto, post length=0.15cm}, myred]
	(B) -- (A) 
	node [myred,midway,xshift=-0.325cm,yshift=-0.565cm,rotate=-21] 
	{$\boldsymbol{F_i}$};

\draw [decorate,decoration={brace,raise=5pt,amplitude=4pt,
	pre=moveto, pre length=0.15cm,
	post=moveto, post length=0.15cm}, myred]
	(AA) -- (BB) 
	node [myred,midway,xshift=-0.295cm,yshift=-0.6cm,rotate=-21] 
	{$\boldsymbol{F_i - \tau_i}$};

\end{tikzpicture}

\caption{According to \lemref{lemJ1.3}, the prism $D_i$ is 
	``weakly covered'' by the translates of the prism $C_i$ 
	with respect to the measure $\mu'_i$.}
\label{fig:img0945}
\end{figure}
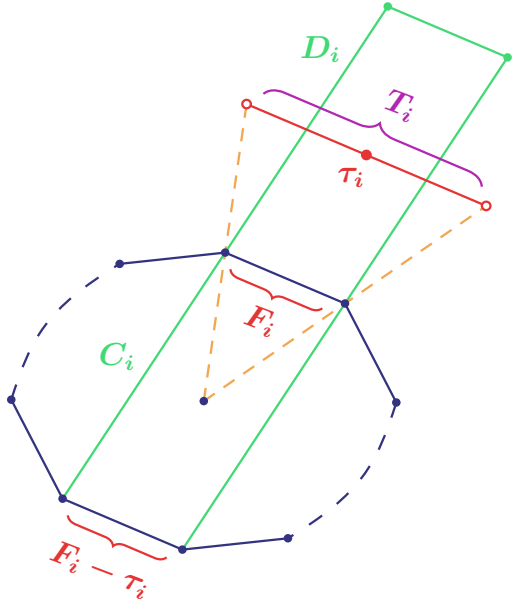


\subsection{}
Let us recall that 
$F_0, F_1, F_2, \dots, F_{2m-1}, F_{2m} = F_0$ 
is  an enumeration of the facets of the belt generated 
by the subfacet $G$ of the convex polytope $A$,
and we have  assumed that the subfacet
 $G$ is given by $G = F_1 \cap F_2$.

For  $i \in \{1,2\}$ (and only for these two values of $i$)
 we  denote
\begin{equation}
\label{eq:J2.1.10}
S_i := \relint(G - G) + \tau_i, \quad \mu''_i := \mu'_i \cdot \1_{S_i}, \quad
\nu''_i := \mu''_i \ast \delta_{- \tau_i}.
\end{equation}

We notice that  $S_i$ is a subset of the set $T_i$ defined
in \eqref{eq:J2.1.6}. This  follows from \lemref{lemJ2.14}
applied relative to the affine hull of the facet $F_i$.
In particular, this shows that
$S_1$ and $S_2$ are disjoint sets, because the sets
$T_1$ and $T_2$ are disjoint. 

It also follows that we have
$\mu''_i = \mu \cdot \1_{S_i}$, that is, in the definition
of the measure $\mu''_i$ it does not
matter whether we restrict  $\mu'_i$ or $\mu$ to the
set $S_i$.

\begin{lem}
\label{lemJ1.4}
Assume that the belt of $A$ generated  by the subfacet $G$ has $6$ or more facets.
Then  for each  $i \in \{1,2\}$ we have
$\1_{G} \ast \nu''_i \geq  1$ a.e.\ with respect to
the $(d-2)$-dimensional volume measure on the subfacet $G$.
\end{lem}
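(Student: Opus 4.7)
The plan is to mirror the proof of \lemref{lemJ2.7} one dimension lower, using \lemref{lemJ2.10} (a wedge comparison near a subfacet) in place of \lemref{lemJ2.15} (a slab comparison near a facet). The input is \lemref{lemJ1.3}, which gives $\1_{C_i} \ast \mu'_i \geq 1$ a.e.\ on $D_i$, and the goal is to descend this to a weak covering of the subfacet $G$ by $\nu''_i$. Fix $i \in \{1,2\}$ and choose coordinates so that $\aff(G) = \{(0,0)\} \times \R^{d-2}$ contains the origin, the exterior unit normals $\xi_1, \xi_2$ of $A$ at $F_1, F_2$ span the $x_1 x_2$-plane, and $G = \{(0,0)\} \times G_0$ with $G_0 \subset \R^{d-2}$ a convex body. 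Fix a compact $\Sigma \subset \interior(G_0)$, set $E := \{(0,0)\} \times \Sigma \subset \relint(G)$, and fix $\eta > 0$. Applying \lemref{lemJ2.11} inside $\aff(G)$, $E - G$ lies in $\relint(G - G)$, so $E + \tau_i - G \subset S_i$. Select an open neighborhood $U$ of $S_i$ in $\R^d$, transversely thin within $T_i$, such that $\mu(U \cap T_i \setminus S_i) < \eta$; this is possible because $\mu$ is locally finite and $S_i$ has codimension one in $T_i$.

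The crucial ingredient is the subfacet analog of \lemref{lemJ2.8}: there exists an open neighborhood $V$ of $E + \tau_i$ in $\R^d$ such that, for every $t \in \supp(\mu) \cap T_i$, if $C_i + t$ meets $V$ then $t \in U$. The proof adapts the separating-hyperplane argument of \lemref{lemE2.1} to the subfacet setting: a separating hyperplane between $A$ and $A + t$ at a limit point of $E + \tau_i$ has exterior normal $\xi$ in the closed arc from $\xi_1$ to $\xi_2$ in the $2$-plane; for $\xi$ strictly inside the arc, $S(A,-\xi) = -G$, which forces $t - \tau_i$ into $\aff(G - G)$, and combined with $t \in T_i$ and the shrinking of $V$ this localizes $t$ to the neighborhood $U$ of $S_i$. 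The hypothesis $m \geq 3$ is indispensable here: it ensures that no other belt facet of $A$ has exterior normal in the open arc between $\xi_1$ and $\xi_2$, so that $S(A, -\xi)$ is exactly $-G$ for every such $\xi$. For $m = 2$, diagonal translates of the parallelogram type could produce separating normals whose support sets are higher-dimensional, and the desired localization would fail.

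With the localization established, decompose $\mu'_i = \mu''_i + \rho_{\mathrm{in}} + \rho_{\mathrm{out}}$, where $\rho_{\mathrm{in}} := \mu'_i \cdot \1_{U \setminus S_i}$ and $\rho_{\mathrm{out}} := \mu'_i \cdot \1_{U^\cm}$. The localization gives $\1_{C_i} \ast \rho_{\mathrm{out}} = 0$ a.e.\ on $V$, while $\| \1_{C_i} \ast \rho_{\mathrm{in}} \|_\infty \leq \rho_{\mathrm{in}}(\R^d) < \eta$; combined with \lemref{lemJ1.3}, this gives $\1_{C_i} \ast \mu''_i \geq 1 - \eta$ a.e.\ on $V \cap D_i$. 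Since $\mu''_i$ is supported on the codimension-two affine subspace $S_i$, the convolution $\1_{C_i} \ast \mu''_i$ is invariant along the $\tau_i$-direction inside $V \cap D_i$ by Fubini. Integrating across a thin wedge $Q_\delta + \tau_i$ near $E + \tau_i$ and sending $\delta \to 0$ — the wedge form of the slab-to-prism comparison used in \lemref{lemJ2.7}, supplied now by \lemref{lemJ2.10} — yields $\1_{G + \tau_i} \ast \mu''_i \geq 1 - O(\eta)$ on a subset of $\Sigma + \tau_i$ of $(d-2)$-measure at least $(1 - O(\eta)) m_{d-2}(\Sigma)$. Translating by $-\tau_i$, letting $\eta \to 0$, and exhausting $\relint(G)$ by compacts $\Sigma$ (using that $\bd{G}$ has zero $(d-2)$-measure relative to $\aff(G)$) completes the proof. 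The main obstacle is the subfacet localization step in the second paragraph, which is precisely where $m \geq 3$ is used.
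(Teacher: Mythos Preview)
Your localization step contains a genuine gap. You claim that if $t \in \supp(\mu) \cap T_i$ and $C_i + t$ meets a small neighborhood $V$ of $E + \tau_i$, then a separating hyperplane between $A$ and $A + t$ passes through a limit point in $E + \tau_i$. But any separating hyperplane between $A$ and $A+t$ can only pass through points of $A \cap (A+t)$, and the set $E + \tau_i \subset G + \tau_i$ lies on the far facet $F_i + \tau_i$ of $D_i$, strictly outside $A$. So the limit point you produce is in $A + t$ but not in $A$, and the separation argument of \lemref{lemE2.1}/\lemref{lemJ2.8} simply does not apply. Without this step, nothing prevents $\mu'_i$-mass in $T_i \setminus S_i$ from contributing near $G + \tau_i$: for any $t \in T_i$ with $t - \tau_i$ a moderate translation along $F_i$ directed from $G$ into $F_i$, the prism $C_i + t = D_i + (t - \tau_i)$ still contains $G + \tau_i$ on its far base, so such translates meet every neighborhood $V$ of $E + \tau_i$ regardless of how far $t$ is from $S_i$.

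The paper takes a different route that avoids this localization entirely. It works inside the hyperplane $H_i \supset F_i$ and starts from \lemref{lemJ1.2} rather than \lemref{lemJ1.3}: one has $\1_{F_i} \ast \nu'_i \geq 1$ on $F_i$ in $(d-1)$-measure. The key new idea is to produce a $(d-1)$-dimensional polytope $B_i := H_i \cap D_j$ on the \emph{other} side of $G$ from $F_i$, on which $\1_{F_i} \ast \nu'_i = 0$. This zero region comes from the weak tiling constraint combined with \lemref{lemJ1.3} applied to the \emph{other} index $j$: translates $F_i + s$ with $s \in \supp(\nu'_i)$ cannot enter $\interior(D_j)$, since $D_j$ is already fully covered by the $\mu'_j$-translates of $C_j$ and any additional contribution would force $\1_A \ast \mu > 1$ on a set of positive measure. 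The hypothesis $m \geq 3$ enters precisely here, to guarantee that $H_i$ actually cuts through $\interior(D_j)$ so that $B_i$ is $(d-1)$-dimensional with $G$ as a face. With $F_i$ and $B_i$ sharing the face $G$, and $\1_{F_i} \ast \nu'_i$ being $\geq 1$ on one and $=0$ on the other, one applies \lemref{lemJ2.7} (not \lemref{lemJ2.10}) relative to $H_i$ to conclude. Your proposal never invokes $D_j$, and this missing ``zero region'' is exactly the obstruction your failed localization was meant to replace.
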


\begin{proof}
We suppose that $i$ is an element of the set $\{1,2\}$ and we
let $j$ be the other element, so that $(i,j) = (1,2)$ or $(2,1)$.
 The proof is divided into several steps.

\emph{Step 1}:
We first claim that if $t \in \supp(\nu'_i)$, then
$F_i + t$ cannot intersect the interior 
of the prism $D_j$. For suppose that $(F_i + t) \cap \interior(D_j)$ is nonempty.
Let $s := t + \tau_i$, then $s \in \supp(\mu'_i)$ and we have
$(F_i - \tau_i + s) \cap \interior(D_j)$ is nonempty.
Since $F_i - \tau_i + s$ is a facet of the prism $C_i + s$, it follows that
$(C_i+s)\cap D_j$ has nonempty interior, and in particular we have
 $m((C_i+s)\cap D_j) > 0$. By \lemref{lemA8.2} this implies that
$\1_{C_i} \ast \mu'_i$ cannot vanish a.e.\ on $D_j$,
and hence there exist $\eta > 0$ and a set $E \subset D_j$, $m(E)>0$,
such that $\1_{C_i} \ast \mu'_i \geq \eta$ a.e.\ on $E$.
 On the other hand we have $\1_{C_j} \ast \mu'_j \geq  \1_{D_j}$ 
a.e.\ by \lemref{lemJ1.3}. Using the fact that $C_i$ and $C_j$
are both subsets of $A$, and that $\mu'_i$ and $\mu'_j$ are the
restrictions of $\mu$ to the two disjoint sets $T_i$ and $T_j$
respectively, we conclude that
\[
\1_A \ast \mu \geq \1_{C_i} \ast \mu'_i + 
 \1_{C_j} \ast \mu'_j \geq  \eta \, \1_E + \1_{D_j} \geq (1 + \eta)\, \1_E
\quad \text{a.e.}
\]
However this contradicts the weak tiling assumption $\1_A \ast \mu = \1_{A^\cm}$ a.e.
This establishes our claim  that $F_i + t$ cannot intersect the interior 
of the prism $D_j$.

\emph{Step 2}:
Let $H_i$ be the hyperplane containing the facet $F_i$, and define
\[
B_i := H_i \cap D_j.
\]
We claim that $B_i$ is a $(d-1)$-dimensional convex polytope, that
$\relint(B_i) \subset \interior(D_j)$, and that
$G$ is a $(d-2)$-dimensional face of $B_i$.

First, it is clear that $B_i$ is a convex polytope, being the intersection 
of a convex polytope and a hyperplane.

Next, recall that we have assumed the belt of $A$ generated  by the subfacet $G$
 to have $6$ or more facets. This implies that the facet 
$L_j := \convex\{G, G- \tau_j\}$ of the
prism $C_j$ divides the dihedral angle of $A$ at its subfacet
$G$ into two (strictly positive) angles $\theta$ and $\varphi$, where
$\theta$ is the dihedral angle between $L_j$ and $F_i$, and
$\varphi$ is the dihedral angle between $L_j$ and $F_j$.
Hence the hyperplane $H_i$ divides the
dihedral angle of the prism $D_j$ at its subfacet
$G$ into two strictly positive angles $\theta$ and $\pi - \theta - \varphi$.
It follows that $H_i$ must intersect the interior of the prism $D_j$ and so
$B_i$ is a $(d-1)$-dimensional convex polytope
(see Figure \ref{fig:img0944}).


\begin{figure}[ht]
\centering

\begin{tikzpicture}[scale=0.0375, style=mystyle]

\def\ax{100};
\def\ay{100};
\def\bx{142};
\def\by{82};
\def\cx{160};
\def\cy{47}
\def\dx{153};
\def\dy{25};
\def\ex{148};
\def\ey{15};
\def\zx{63};
\def\zy{96};
\def\ox{92.5};
\def\oy{47.5};
\def\aax{2*\ox - \ax}
\def\aay{2*\oy - \ay}
\def\bbx{2*\ox - \bx}
\def\bby{2*\oy - \by}
\def\ccx{2*\ox - \cx}
\def\ccy{2*\oy - \cy}
\def\ddx{2*\ox - \dx}
\def\ddy{2*\oy - \dy}
\def\eex{2*\ox - \ex}
\def\eey{2*\oy - \ey}
\def\zzx{2*\ox - \zx}
\def\zzy{2*\oy - \zy}
\def\pjx{{2*\ax -  (\zzx)}}
\def\pjy{{2*\ay -  (\zzy)}}
\def\qjx{{2*\zx -  (\aax)}}
\def\qjy{{2*\zy -  (\aay)}}
\def\hiax{\ax*3 - 2*\bx}
\def\hiay{\ay*3 - 2*\by}
\def\hibx{-1.5*\ax + 2.5* \bx}
\def\hiby{-1.5*\ay + 2.5* \by}
\def\hjax{\ax*3.5 - 2.5*\zx}
\def\hjay{\ay*3.5 - 2.5*\zy}
\def\hjbx{-1.5*\ax + 2.5* \zx}
\def\hjby{-1.5*\ay + 2.5* \zy}
\def\cjx{{0.5*\zx + 0.5*(\aax)}}
\def\cjy{{0.5*\zy + 0.5*(\aay)}}
\def\djx{{1.5*\ax - 0.5*(\zzx)}}
\def\djy{{1.5*\ay - 0.5*(\zzy)}}

\coordinate (O) at (\ox,\oy);
\coordinate (A) at (\ax,\ay);
\coordinate (B) at (\bx,\by);
\coordinate (C) at (\cx,\cy);
\coordinate (D) at (\dx,\dy);
\coordinate (E) at (\ex,\ey);
\coordinate (Z) at (\zx,\zy);
\coordinate (AA) at (\aax,\aay);
\coordinate (BB) at (\bbx,\bby);
\coordinate (CC) at (\ccx,\ccy);
\coordinate (DD) at (\ddx,\ddy);
\coordinate (EE) at (\eex,\eey);
\coordinate (ZZ) at (\zzx,\zzy);
\coordinate (PJ) at (\pjx,\pjy);
\coordinate (QJ) at (\qjx,\qjy);
\coordinate (HIA) at (\hiax,\hiay);
\coordinate (HIB) at (\hibx,\hiby);
\coordinate (HJA) at (\hjax,\hjay);
\coordinate (HJB) at (\hjbx,\hjby);
\coordinate (CJ) at (\cjx,\cjy);
\coordinate (DJ) at (\djx,\djy);

\draw [name path=linehi, myyellow, mydashedb] (HIA) -- (HIB);
\draw [name path=linehj, myyellow, mydashedb] (HJA) -- (HJB);

\draw [mygreen] (AA) -- (Z) -- (QJ) -- (PJ) -- (A) -- (ZZ) ;

\draw(Z) -- (A) -- (B) -- (C);
\draw(ZZ) -- (AA) -- (BB) -- (CC);
\draw [mydasheda]   (C) .. controls (D) and (E) .. (ZZ);
\draw [mydasheda]   (CC) .. controls (DD) and (EE) .. (Z);

\fill (O) circle (\mycirc);
\fill (A) circle (\mycirc);
\fill (B) circle (\mycirc);
\fill (C) circle (\mycirc);
\fill (Z) circle (\mycirc);
\fill (AA) circle (\mycirc);
\fill (BB) circle (\mycirc);
\fill (CC) circle (\mycirc);
\fill (ZZ) circle (\mycirc);

\fill [mygreen] (PJ) circle (\mycirc);
\fill [mygreen] (QJ) circle (\mycirc);

\draw (A) 
	node[xshift=0.275cm,yshift=0.375cm]  
	{$\boldsymbol{G}$};

\draw [myyellow]	 (HJA) 
	node[xshift=-0.25cm,yshift=0.3cm,rotate=6] 
	{$\boldsymbol{H_j}$};

\draw [myyellow] (HIB) 
	node[xshift=-0.25cm,yshift=0.55cm,rotate=-21] 
	{$\boldsymbol{H_i}$};

\draw [mygreen] (CJ) 
	node[xshift=-0.5cm,yshift=-0.35cm] 
	{$\boldsymbol{C_j}$};

\draw [mygreen] (DJ) 
	node[xshift=0.5cm,yshift=0.35cm] 
	{$\boldsymbol{D_j}$};

\draw [decorate,decoration={brace,raise=5pt,amplitude=4pt,
	pre=moveto, pre length=0.15cm,
	post=moveto, post length=0.15cm}, myred]
	(A) -- (Z) 
	node [myred,midway,xshift=0.1cm,yshift=-0.7cm,rotate=6] 
	{$\boldsymbol{F_j}$};

\draw [decorate,decoration={brace,raise=5pt,amplitude=4pt,aspect=0.4,
	pre=moveto, pre length=0.15cm,
	post=moveto, post length=0.35cm}, myred]
	(B) -- (A) 
	node [myred,midway,xshift=-0.1cm,yshift=-0.7cm,rotate=-21] 
	{$\boldsymbol{F_i}$};

\draw [decorate,decoration={brace,raise=5pt,amplitude=4pt,aspect=0.4,
	pre=moveto, pre length=0cm,
	post=moveto, post length=0.3cm}, myred]
	(intersection of HIA--HIB and Z--QJ) -- (A) 
	node [myred,midway,xshift=0cm,yshift=0.7cm,rotate=-21] 
	{$\boldsymbol{B_i}$};

\end{tikzpicture}

\caption{If the belt generated by the subfacet $G$ has $6$ or more
	facets, then the hyperplane $H_i$ containing the
	facet $F_i$ intersects the interior of the prism $D_j$,
	and $B_i = H_i \cap D_j$ is a $(d-1)$-dimensional convex polytope.}
\label{fig:img0944}
\end{figure}
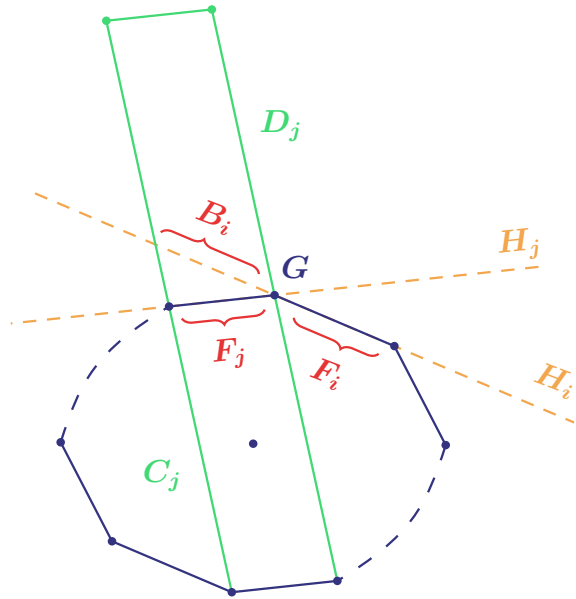


Let $H_j$ be the hyperplane containing the facet $F_j$, and $H^{+}_j$
be the closed halfspace bounded by $H_j$ which contains the prism $D_j$.
 Then $B_i$ is contained in  $H^{+}_j$ and we have $B_i \cap H_j = G$.
This shows that $H_j$ is a support hyperplane of $B_i$ and 
$G$ is a face (of dimension $d-2$) of $B_i$.

Finally, we show that $\relint(B_i) \subset \interior(D_j)$. Indeed,
let $x \in \relint(B_i)$. Since $H_i$ is the affine hull of $B_i$, 
this means that
there is an open set $V$ such that $x \in V \cap H_i 
\subset B_i$. In particular this implies that $x \in D_j$,
so it is enough to prove that $x$ cannot lie on $\bd{D_j}$. 
Indeed, since $x \in V \cap H_i \subset D_j$, the point $x$ can lie
on $\bd{D_j}$ only if $H_i$ is a support hyperplane of $D_j$.
But this is not the case, since $H_i$ intersects $\interior(D_j)$,
so we must have $x \in \interior(D_j)$.

\emph{Step 3}:
It follows that if $t \in \supp(\nu'_i)$, then $F_i+t$ cannot 
intersect $\relint(B_i)$. Indeed, as we have shown above, $\relint(B_i)$
is contained in the interior 
of the prism $D_j$, while  $F_i+t$ cannot intersect the interior 
of $D_j$. Hence $F_i+t$ and $\relint(B_i)$
are disjoint sets for every  $t \in \supp(\nu'_i)$.
From this we conclude that
$\1_{F_i} \ast \nu'_i = 0$ a.e.\ with respect to
the $(d-1)$-dimensional volume measure on $B_i$.

\emph{Step 4}:
The sets $F_i$ and $B_i$ are two
$(d-1)$-dimensional convex polytopes contained in the
same hyperplane $H_i$, they have disjoint relative
interiors,  and they share
$G$ as a common $(d-2)$-dimensional face.
Recall that by \lemref{lemJ1.2} we have
 $\1_{F_i} \ast \nu'_i \geq  1$ a.e.\ with respect to
the $(d-1)$-dimensional volume measure on $F_i$, while
we have just shown in Step 3 above that
$\1_{F_i} \ast \nu'_i = 0$ a.e.\ with respect to
the $(d-1)$-dimensional volume measure on $B_i$.
We may therefore apply \lemref{lemJ2.7} (relative to the
hyperplane $H_i$ containing $F_i$ and $B_i$) and conclude
that the measure $\nu''_i$ obtained by
restricting  the measure $\nu'_i$ to
$\relint(G - G)$, satisfies 
$\1_{G} \ast \nu''_i \geq 1$ a.e.\ with respect to
the $(d-2)$-dimensional volume measure on $G$.
So we obtain the assertion of the lemma.
\end{proof}

\subsection{}
For $i \in \{1,2\}$, we denote
by $N_i$ the  hyperplane which contains the
subfacet $G$ and which is parallel to the facet
$F_0$ if $i=1$, or parallel to the facet $F_3$ if $i=2$.
Let $N^{-}_i$ be the closed halfspace bounded by  $N_i$ 
that has exterior normal unit vector which is opposite to
the exterior normal vector of $A$ at the facet $F_0$ (if $i=1$) or $F_3$ (if $i=2$).
It is not difficult to verify that  $N^{-}_i$ is the support
halfspace of $A + \tau_i$ at its facet
$F_0 - \tau_0 + \tau_1$ for $i=1$, or 
$F_3 - \tau_3 + \tau_2$ for $i=2$.

\begin{lem}
\label{lemJ1.5}
Assume that the belt of $A$ generated  by the subfacet $G$ has $8$ or more facets.
Then the set $M := (A+ \tau_1) \cap (A+ \tau_2)$  is a convex polytope with nonempty
interior, $G$ is a subfacet of $M$, and $N^{-}_1$, $N^{-}_2$ 
are the support halfspaces of $M$ at its two facets which
meet at the subfacet $G$ (see Figure \ref{fig:img0926}).
\end{lem}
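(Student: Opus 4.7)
The plan is to analyze $M$ near $G$ by first identifying which facets of $A + \tau_1$ and $A + \tau_2$ meet at $G$, and then understanding how their support halfspaces intersect. Since $A = -A$ and $F_1 = -F_1 + \tau_1$, the set $F_1$ is also a facet of $A + \tau_1$, and $A$ and $A + \tau_1$ lie on opposite sides of the hyperplane $H_1$ containing $F_1$. The center of symmetry of $F_1$ is $\tau_1/2$, so reflecting $G = F_1 \cap F_2$ through this center yields $\tau_1 - G$, which by the belt construction equals the opposite subfacet $F_0 \cap F_1$ on $F_1$. Hence $G - \tau_1 = -(F_0 \cap F_1) = F_m \cap F_{m+1}$ is a subfacet of $A$, so the two facets of $A + \tau_1$ meeting at $G$ are $F_1$ and $F_m + \tau_1 = -F_0 + \tau_1$, with support halfspaces $H_1^+$ and $N_1^-$ respectively. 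An analogous argument shows that the two facets of $A + \tau_2$ meeting at $G$ are $F_2$ and $-F_3 + \tau_2$, with support halfspaces $H_2^+$ and $N_2^-$.

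Next I would pass to the $2$-plane $\pi$ through a point $p \in \relint(G)$ orthogonal to $\aff(G)$. The cross section $A \cap \pi$ is a centrally symmetric $2m$-gon whose $2m$ edges are the cross sections of the belt facets, and $p$ is a vertex of this polygon. The four hyperplanes $H_1, N_1, H_2, N_2$ cut $\pi$ in four lines through $p$; the outward normals of $H_1^+, N_1^-, H_2^+, N_2^-$ project onto $\pi$ as (nonzero scalar multiples of) $-\xi_1, -\xi_0, -\xi_2, -\xi_3$ respectively. Because the belt has at least $8$ facets, the directions $\xi_0, \xi_1, \xi_2, \xi_3$ occupy four consecutive positions among the $m \geq 4$ independent belt normals, so their total angular spread is strictly less than $\pi$. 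A direct comparison of arcs of unit vectors then shows that in $\pi$ the cone $N_1^- \cap N_2^-$ at $p$ is a nondegenerate $2$-dimensional wedge, and is contained in the cone $H_1^+ \cap H_2^+$ at $p$. Hence $H_1$ and $H_2$ are redundant constraints for $M$ near $p$, and the local support cone of $M$ at $p$ coincides with $N_1^- \cap N_2^-$. Translating back to $\R^d$, this yields that $M$ is a convex polytope with nonempty interior, $G$ is a subfacet of $M$, and $N_1^-$, $N_2^-$ are the support halfspaces of $M$ at its two facets meeting at $G$.

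The main obstacle is the combinatorial identification of the two facets of each $A + \tau_i$ at $G$, which uses the central symmetries of both $A$ and the facets $F_i$ together with the belt construction. The geometric crux is then verifying that $N_1^- \cap N_2^- \subset H_1^+ \cap H_2^+$ with strict nondegeneracy, and this is precisely where the hypothesis $m \geq 4$ enters: for $m = 3$ one would have $\xi_3 = -\xi_0$, so $N_1^- \cap N_2^-$ would collapse to a half-line and $M$ would have empty interior near $G$, while $m \geq 4$ gives the strict inequality $\theta_3 - \theta_0 < \pi$ needed for a genuine $2$-dimensional wedge and hence for $M$ to have the claimed local structure at $G$.
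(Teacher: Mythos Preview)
Your argument is correct and follows essentially the same route as the paper's proof: both reduce to a two-dimensional angle inequality in the plane orthogonal to $\aff(G)$, with your condition $\theta_3 - \theta_0 < \pi$ on the span of the normals $\xi_0,\xi_1,\xi_2,\xi_3$ being exactly equivalent to the paper's inequality $\alpha + \beta + \gamma > 2\pi$ on the dihedral angles at the three consecutive subfacets (since $\theta_3 - \theta_0 = 3\pi - (\alpha+\beta+\gamma)$). The paper is somewhat terser---it does not spell out the identification of the four relevant facets of $A+\tau_1$ and $A+\tau_2$ at $G$ as carefully as you do---and it closes by invoking \lemref{lemJ2.6} to pass from the local wedge $N_1^-\cap N_2^-$ to the conclusion that $M$ has nonempty interior with $G$ a subfacet; your ``translating back to $\R^d$'' step is doing the same thing and would benefit from citing that lemma explicitly.
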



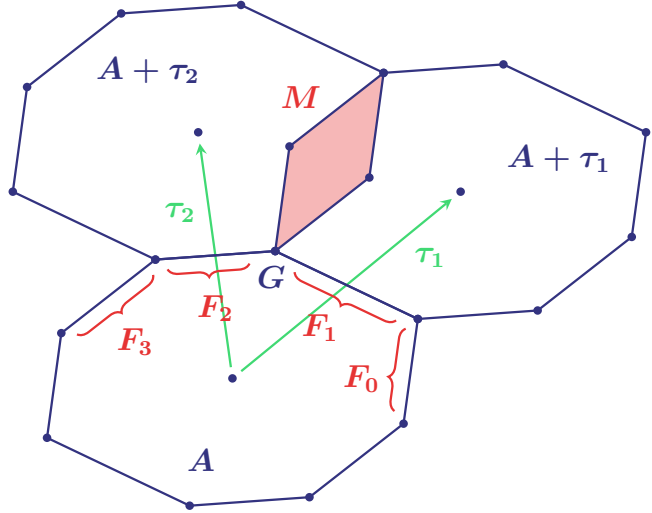
\begin{figure}[ht]
\centering

\begin{tikzpicture}[scale=0.0375, style=mystyle]

\def\ax{100};
\def\ay{100};
\def\bx{150};
\def\by{76};
\def\cx{145};
\def\cy{39}
\def\zx{58};
\def\zy{97};
\def\ox{85};
\def\oy{55};
\def\aax{2*\ox - \ax}
\def\aay{2*\oy - \ay}
\def\bbx{2*\ox - \bx}
\def\bby{2*\oy - \by}
\def\ccx{2*\ox - \cx}
\def\ccy{2*\oy - \cy}
\def\zzx{2*\ox - \zx}
\def\zzy{2*\oy - \zy}
\def\tauix{\ax+\bx-2*\ox}
\def\tauiy{\ay+\by-2*\oy}
\def\taujx{\ax+\zx-2*\ox}
\def\taujy{\ay+\zy-2*\oy}

\coordinate (O) at (\ox,\oy);
\coordinate (A) at (\ax,\ay);
\coordinate (B) at (\bx,\by);
\coordinate (C) at (\cx,\cy);
\coordinate (Z) at (\zx,\zy);
\coordinate (AA) at (\aax,\aay);
\coordinate (BB) at (\bbx,\bby);
\coordinate (CC) at (\ccx,\ccy);
\coordinate (ZZ) at (\zzx,\zzy);

\coordinate (TAUI) at (\tauix,\tauiy);
\coordinate (TAUJ) at (\taujx,\taujy);

\coordinate (OI) at (\ox + \tauix ,\oy + \tauiy);
\coordinate (AI) at (\ax + \tauix ,\ay + \tauiy);
\coordinate (BI) at (\bx + \tauix,\by + \tauiy);
\coordinate (CI) at (\cx + \tauix,\cy + \tauiy);
\coordinate (ZI) at (\zx + \tauix,\zy + \tauiy);
\coordinate (AAI) at (\aax + \tauix,\aay + \tauiy);
\coordinate (BBI) at (\bbx + \tauix,\bby + \tauiy);
\coordinate (CCI) at (\ccx + \tauix,\ccy + \tauiy);
\coordinate (ZZI) at (\zzx + \tauix,\zzy + \tauiy);

\coordinate (OJ) at (\ox + \taujx ,\oy + \taujy);
\coordinate (AJ) at (\ax + \taujx ,\ay + \taujy);
\coordinate (BJ) at (\bx + \taujx,\by + \taujy);
\coordinate (CJ) at (\cx + \taujx,\cy + \taujy);
\coordinate (ZJ) at (\zx + \taujx,\zy + \taujy);
\coordinate (AAJ) at (\aax + \taujx,\aay + \taujy);
\coordinate (BBJ) at (\bbx + \taujx,\bby + \taujy);
\coordinate (CCJ) at (\ccx + \taujx,\ccy + \taujy);
\coordinate (ZZJ) at (\zzx + \taujx,\zzy + \taujy);

\fill [myred, opacity=0.35]
	(A) -- (CCI) -- (ZI) -- (CJ) -- (A);

\draw [->,>=stealth,shorten >=0.15cm,shorten <=0.15cm,mygreen] 
	(O) -- (OI);

\draw [->,>=stealth,shorten >=0.15cm,shorten <=0.15cm,mygreen] 
	(O) -- (OJ);

\draw (CC) -- (Z) -- (A) -- (B) -- (C) -- (ZZ) -- (AA) -- (BB) -- (CC);
\draw (CCI) -- (ZI) -- (AI) -- (BI) -- (CI) -- (ZZI) -- (AAI) -- (BBI) -- (CCI);
\draw (CCJ) -- (ZJ) -- (AJ) -- (BJ) -- (CJ) -- (ZZJ) -- (AAJ) -- (BBJ) -- (CCJ);

\fill (O) circle (\mycirc);
\fill (A) circle (\mycirc);
\fill (B) circle (\mycirc);
\fill (C) circle (\mycirc);
\fill (Z) circle (\mycirc);
\fill (AA) circle (\mycirc);
\fill (BB) circle (\mycirc);
\fill (CC) circle (\mycirc);
\fill (ZZ) circle (\mycirc);

\fill (OI) circle (\mycirc);
\fill (AI) circle (\mycirc);
\fill (BI) circle (\mycirc);
\fill (CI) circle (\mycirc);
\fill (ZI) circle (\mycirc);
\fill (AAI) circle (\mycirc);
\fill (BBI) circle (\mycirc);
\fill (CCI) circle (\mycirc);
\fill (ZZI) circle (\mycirc);

\fill (OJ) circle (\mycirc);
\fill (AJ) circle (\mycirc);
\fill (BJ) circle (\mycirc);
\fill (CJ) circle (\mycirc);
\fill (ZJ) circle (\mycirc);
\fill (AAJ) circle (\mycirc);
\fill (BBJ) circle (\mycirc);
\fill (CCJ) circle (\mycirc);
\fill (ZZJ) circle (\mycirc);

\draw (A) 
	node[xshift=-0.05cm,yshift=-0.35cm] 
	{$\boldsymbol{G}$};

\draw (AA) 
	node[xshift=0.15cm,yshift=0.6cm] 
	{$\boldsymbol{A}$};

\draw (BI) 
	node[xshift=-1.15cm,yshift=-0.35cm] 
	{$\boldsymbol{A+\tau_1}$};

\draw (ZJ) 
	node[xshift=0.35cm,yshift=-0.75cm] 
	{$\boldsymbol{A+\tau_2}$};

\draw [mygreen] ({\ox+0.75*(\tauix)}, {\oy+0.75*(\tauiy)}) 
	node[xshift=0.35cm,yshift=-0.25cm] 
	{$\boldsymbol{\tau_1}$};

\draw [mygreen] ({\ox+0.75*(\taujx)}, {\oy+0.75*(\taujy)}) 
	node[xshift=-0.35cm,yshift=-0.25cm] 
	{$\boldsymbol{\tau_2}$};

\draw [myred] (CCI) 
	node[xshift=0.15cm,yshift=0.65cm] 
	{$\boldsymbol{M}$};

\draw [decorate,decoration={brace,raise=4pt,amplitude=4pt,aspect=0.4,
	pre=moveto, pre length=0.15cm,
	post=moveto, post length=0.15cm}, myred]
	(C) -- (B) 
	node [myred,midway,xshift=-0.65cm,yshift=-0.1cm] 
	{$\boldsymbol{F_0}$};

\draw [decorate,decoration={brace,raise=4pt,amplitude=4pt,
	pre=moveto, pre length=0.25cm,
	post=moveto, post length=0.35cm}, myred]
	(B) -- (A) 
	node [myred,midway,xshift=-0.35cm,yshift=-0.565cm] 
	{$\boldsymbol{F_1}$};

\draw [decorate,decoration={brace,raise=4pt,amplitude=4pt,
	pre=moveto, pre length=0.35cm,
	post=moveto, post length=0.15cm}, myred]
	(A) -- (Z) 
	node [myred,midway,xshift=0cm,yshift=-0.7cm] 
	{$\boldsymbol{F_2}$};

\draw [decorate,decoration={brace,raise=4pt,amplitude=4pt,
	pre=moveto, pre length=0.15cm,
	post=moveto, post length=0.15cm}, myred]
	(Z) -- (CC) 
	node [myred,midway,xshift=0.35cm,yshift=-0.6cm] 
	{$\boldsymbol{F_3}$};

\end{tikzpicture}

\caption{The shaded region in the illustration represents the 
	convex polytope $M = (A+ \tau_1) \cap (A+ \tau_2)$ in
	\lemref{lemJ1.5}.}
\label{fig:img0926}
\end{figure}


\begin{proof}
Let $\alpha$, $\beta$ and $\gamma$ denote the dihedral angles of $A$
 at the subfacets $G$, $-G + \tau_1$ and 	$-G + \tau_2$ respectively.
If the belt of $A$ generated  by   $G$ has $8$ or more facets,
then we must have $\alpha + \beta + \gamma > 2 \pi$
(see Figure \ref{fig:img0946}). On the other hand, 
 each one of $\alpha$, $\beta$ and $\gamma$ is strictly smaller 
than $\pi$.
Hence the hyperplane $N_1$ divides the dihedral angle
of $A+\tau_2$ at the subfacet $G$
 into two strictly positive angles
$2\pi - \alpha - \beta$ and $\alpha + \beta + \gamma - 2 \pi$,
 while the hyperplane $N_2$ 
divides the dihedral angle of $A+\tau_1$ at $G$
 into two strictly positive angles
$2\pi - \alpha - \gamma$ and $\alpha + \beta + \gamma - 2 \pi$.
In particular the two hyperplanes $N_1$ and $N_2$
 are not parallel, and thus $N_1 \cap N_2 = \aff(G)$.

It is clear that  $M := (A+ \tau_1) \cap (A+ \tau_2)$  is a convex polytope,
being the intersection of two convex polytopes. 
Since $F_1$ is a facet of $A+\tau_1$ and $F_2$ is a facets of $A+\tau_2$,
then $G$  is a subfacet of both $A+\tau_1$ and $A+\tau_2$.
Let $E$ be a 
closed $(d-2)$-dimensional ball contained in $\relint(G)$. For $\delta > 0$
we denote $D(E, \delta) := (E + S_\delta) \cap N^{-}_1 \cap N^{-}_2$,
where $S_\delta$ is a closed $2$-dimensional ball of radius $\delta$  
centered at the origin and orthogonal to $\aff(G)$. Using \lemref{lemJ2.6}
we obtain that if $\delta = \delta(A,G,E) > 0$ is small enough,  
then $D(E, \delta)$ is contained  both in $A+ \tau_1$ and in $A+ \tau_2$, 
and hence $D(E, \delta) \subset M$.
It follows that $M$ has nonempty interior, and that $N_1$, $N_2$ 
are support hyperplanes of $M$ such that the corresponding support
sets $M \cap N_1$ and $M \cap N_2$ are $(d-1)$-dimensional, and hence
these support sets are facets of $M$. We conclude that
$G$ is a subfacet of $M$, being the intersection of two adjacent
facets $M \cap N_1$ and $M \cap N_2$ of $M$, and that
$N^{-}_1$, $N^{-}_2$ 
are the support halfspaces of $M$ at its two facets which
meet at the subfacet $G$.
\end{proof}


\begin{figure}[ht]
\centering

\begin{tikzpicture}[scale=0.0375, style=mystyle]

\def\ax{100};
\def\ay{100};
\def\bx{150};
\def\by{76};
\def\cx{145};
\def\cy{39}
\def\zx{58};
\def\zy{97};
\def\ox{85};
\def\oy{55};
\def\aax{2*\ox - \ax}
\def\aay{2*\oy - \ay}
\def\bbx{2*\ox - \bx}
\def\bby{2*\oy - \by}
\def\ccx{2*\ox - \cx}
\def\ccy{2*\oy - \cy}
\def\zzx{2*\ox - \zx}
\def\zzy{2*\oy - \zy}
\def\tauix{\ax+\bx-2*\ox}
\def\tauiy{\ay+\by-2*\oy}
\def\taujx{\ax+\zx-2*\ox}
\def\taujy{\ay+\zy-2*\oy}

\coordinate (O) at (\ox,\oy);
\coordinate (A) at (\ax,\ay);
\coordinate (B) at (\bx,\by);
\coordinate (C) at (\cx,\cy);
\coordinate (Z) at (\zx,\zy);
\coordinate (AA) at (\aax,\aay);
\coordinate (BB) at (\bbx,\bby);
\coordinate (CC) at (\ccx,\ccy);
\coordinate (ZZ) at (\zzx,\zzy);

\coordinate (TAUI) at (\tauix,\tauiy);
\coordinate (TAUJ) at (\taujx,\taujy);

\coordinate (OI) at (\ox + \tauix ,\oy + \tauiy);
\coordinate (AI) at (\ax + \tauix ,\ay + \tauiy);
\coordinate (BI) at (\bx + \tauix,\by + \tauiy);
\coordinate (CI) at (\cx + \tauix,\cy + \tauiy);
\coordinate (ZI) at (\zx + \tauix,\zy + \tauiy);
\coordinate (AAI) at (\aax + \tauix,\aay + \tauiy);
\coordinate (BBI) at (\bbx + \tauix,\bby + \tauiy);
\coordinate (CCI) at (\ccx + \tauix,\ccy + \tauiy);
\coordinate (ZZI) at (\zzx + \tauix,\zzy + \tauiy);

\coordinate (OJ) at (\ox + \taujx ,\oy + \taujy);
\coordinate (AJ) at (\ax + \taujx ,\ay + \taujy);
\coordinate (BJ) at (\bx + \taujx,\by + \taujy);
\coordinate (CJ) at (\cx + \taujx,\cy + \taujy);
\coordinate (ZJ) at (\zx + \taujx,\zy + \taujy);
\coordinate (AAJ) at (\aax + \taujx,\aay + \taujy);
\coordinate (BBJ) at (\bbx + \taujx,\bby + \taujy);
\coordinate (CCJ) at (\ccx + \taujx,\ccy + \taujy);
\coordinate (ZZJ) at (\zzx + \taujx,\zzy + \taujy);

\draw [myred] 
	({\ax * 0.675 + (\zx) * 0.325}, {\ay * 0.675 + (\zy) * 0.325} )
	arc (190:330:12)
	node [midway,xshift=0cm,yshift=-0.3cm]
	{$\boldsymbol{\alpha}$};

\draw [mygreen] 
	(\ax * 0.75 + \bx * 0.25, \ay * 0.75 + \by * 0.25 )
	arc (-40:110:10)
	node [midway,xshift=0.35cm,yshift=-0.15cm]
	{$\boldsymbol{\beta}$};

\draw [mygreen] 
	(\ax * 0.25 + \bx * 0.75, \ay * 0.25 + \by * 0.75 )
	arc (-180-40:-180+110:10)
	node [midway,xshift=-0.35cm,yshift=-0.15cm]
	{$\boldsymbol{\beta}$};

\draw [myyellow] 
	({\ax * 0.75 + (\zx) * 0.25}, {\ay * 0.75 + (\zy) * 0.25} ) 
	arc (180:55:16) 
	node [midway,xshift=-0.2cm,yshift=0.15cm]
	{$\boldsymbol{\gamma}$};

\draw [myyellow] 
	({\zx * 0.75 + (\ax) * 0.25}, {\zy * 0.75 + (\ay) * 0.25} ) 
	arc (0:-180+55:16) 
	node [midway,xshift=0cm,yshift=-0.3cm]
	{$\boldsymbol{\gamma}$};

\draw (CC) -- (Z) -- (A) -- (B) -- (C) -- (ZZ) -- (AA) -- (BB) -- (CC);
\draw (CCI) -- (ZI) -- (AI) -- (BI) -- (CI) -- (ZZI) -- (AAI) -- (BBI) -- (CCI);
\draw (CCJ) -- (ZJ) -- (AJ) -- (BJ) -- (CJ) -- (ZZJ) -- (AAJ) -- (BBJ) -- (CCJ);

\fill (O) circle (\mycirc);
\fill (A) circle (\mycirc);
\fill (B) circle (\mycirc);
\fill (C) circle (\mycirc);
\fill (Z) circle (\mycirc);
\fill (AA) circle (\mycirc);
\fill (BB) circle (\mycirc);
\fill (CC) circle (\mycirc);
\fill (ZZ) circle (\mycirc);

\fill (OI) circle (\mycirc);
\fill (AI) circle (\mycirc);
\fill (BI) circle (\mycirc);
\fill (CI) circle (\mycirc);
\fill (ZI) circle (\mycirc);
\fill (AAI) circle (\mycirc);
\fill (BBI) circle (\mycirc);
\fill (CCI) circle (\mycirc);
\fill (ZZI) circle (\mycirc);

\fill (OJ) circle (\mycirc);
\fill (AJ) circle (\mycirc);
\fill (BJ) circle (\mycirc);
\fill (CJ) circle (\mycirc);
\fill (ZJ) circle (\mycirc);
\fill (AAJ) circle (\mycirc);
\fill (BBJ) circle (\mycirc);
\fill (CCJ) circle (\mycirc);
\fill (ZZJ) circle (\mycirc);

\draw (AA) 
	node[xshift=0.15cm,yshift=0.6cm] 
	{$\boldsymbol{A}$};

\draw (BI) 
	node[xshift=-1.15cm,yshift=-0.35cm] 
	{$\boldsymbol{A+\tau_1}$};

\draw (ZJ) 
	node[xshift=0.35cm,yshift=-0.75cm] 
	{$\boldsymbol{A+\tau_2}$};

\end{tikzpicture}

\caption{If the belt of $A$ generated  by the subfacet $G$ has $8$ or more facets,
	then the sum of the dihedral angles $\alpha$, $\beta$ and $\gamma$
	 at the subfacets $G$, $-G + \tau_1$ and 	$-G + \tau_2$ respectively
	is strictly greater than $2 \pi$.}
\label{fig:img0946}
\end{figure}
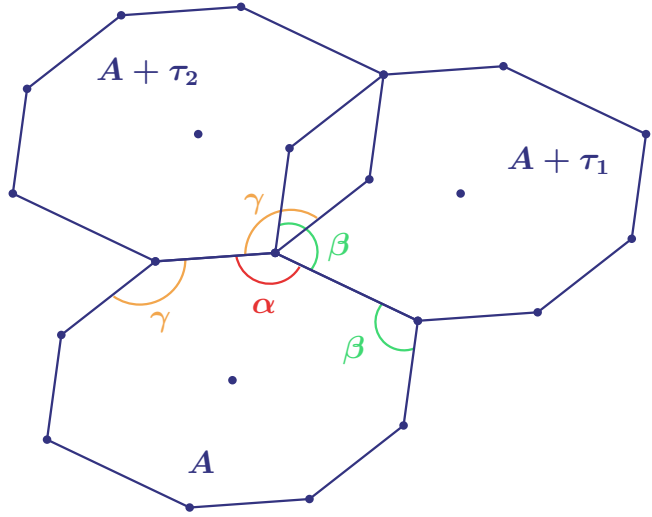


\subsection{}
Now we can finish the proof of  \thmref{thmJ1.0}.

Indeed, suppose to the contrary
 that the belt of $A$ generated  by the subfacet $G$ has $8$
 or more facets. By \lemref{lemJ1.5}, the set $M := (A+ \tau_1) \cap (A+ \tau_2)$ 
 is a convex polytope with nonempty
interior, $G$ is a subfacet of $M$, and $N^{-}_1$, $N^{-}_2$ 
are the support halfspaces of $M$ at its two facets which
meet at the subfacet $G$.
For each  $i \in \{1,2\}$,  $\nu''_i$ is a
finite measure supported on $G-G$, and by \lemref{lemJ1.4} we have
$\1_{G} \ast \nu''_i \geq  1$ a.e.\ with respect to
the $(d-2)$-dimensional volume measure on the subfacet $G$.
Then we can apply \lemref{lemJ2.10} to the convex polytope $M$.
It follows from the lemma that if we denote
$Q_\delta = (G + S_\delta) \cap N^{-}_1 \cap N^{-}_2$,
where $S_\delta$ is a closed $2$-dimensional ball of radius $\delta$  centered at the
origin and orthogonal to $\aff(G)$, then for any $\eta > 0$ we have
\begin{equation}
\label{eq:J1.6.1}
m \{ x  \in Q_\delta : (\1_{M} \ast \nu''_i ) (x) < 1-\eta \} = o( m(Q_\delta) ),
\quad \delta \to 0, \quad i \in \{1,2\}.
\end{equation}

Fix any number $0 < \eta < \half$, and let $D(\delta, \eta)$ denote
the set of all points $x \in Q_\delta$ for which we have
$(\1_M \ast \nu''_i)(x) \geq 1 - \eta$ for both
 $i=1$ and $i=2$. The set $Q_\delta$ has positive
measure, and therefore it follows from
\eqref{eq:J1.6.1} that if $\delta$ is small enough 
then also $D(\delta, \eta)$ has positive measure. On the other hand,
we have
\[
\1_A \ast \mu  \geq \1_A \ast (\mu''_1 + \mu''_2) = 
\1_{A + \tau_1} \ast \nu''_1 + \1_{A + \tau_2} \ast \nu''_2 
\geq \1_M \ast (\nu''_1 +  \nu''_2 ),
\]
where the first inequality  is due to the fact that
the  measures $\mu''_1$ and $\mu''_2$ are obtained by
restricting $\mu$ respectively to the  disjoint sets $S_1$ and $S_2$
defined in \eqref{eq:J2.1.10}, while the second inequality holds since
$M$ is a subset of both $A + \tau_1$ and $A + \tau_2$.
This implies that we have $\1_A \ast \mu \geq 2(1 - \eta) > 1$
a.e.\ on $D(\delta, \eta)$. 
However this contradicts the weak tiling
 assumption $\1_A \ast \mu = \1_{A^\cm}$ a.e.
We conclude that the belt of $A$ generated  by the subfacet $G$ can
have only $4$ or $6$ facets, and this completes the proof
of \thmref{thmJ1.0}. 
\qed


\end{document}